\documentclass[11pt]{amsart}
\usepackage{amssymb}
\usepackage{amsmath}
\usepackage{graphicx}
\usepackage{amscd}
\usepackage{psfrag}
\usepackage{hyperref}

\newtheorem{theorem}{Theorem}[section]
\newtheorem{lemma}[theorem]{Lemma}
\newtheorem{corollary}[theorem]{Corollary}
\newtheorem{proposition}[theorem]{Proposition}

\theoremstyle{definition}

\newtheorem{remark}[theorem]{Remark}

\newtheorem{notation}[theorem]{Notation}
\newtheorem{example}[theorem]{Example}

\numberwithin{equation}{section}

\allowdisplaybreaks

%%%%%%%%%%%%%%%%%%%%%%%%%%%%%%%%%%%%%%%%%%%%%%%%%%%%%%%%%%%%%%%
% To be removed later
\newcommand\boxt[1]{\boxed{\text{\rm #1}\index{boxt}}}
\def\boxit#1{\vbox{\hrule\hbox{\vrule\kern3pt
     \vbox{\kern3pt#1\kern3pt}\kern3pt\vrule}\hrule}}

%%%%%%%%%%%%%%%%%%%%%%%%%%%%%%%%%%%%%%%%%%%%%%%%%%%%%%%%%%%%%%%
%       Math definitions

\newcommand\Nil{{\text{Nil}}}
\newcommand\Sol{{\text{Sol}}}
\newcommand\on[1]{{\rm #1}}

\newcommand\msmall{\scriptstyle}
\newcommand\x{\times}
\newcommand\rx{\rtimes}

\newcommand\wt{\widetilde}

\newcommand\ps{{'\kern-2pt S}}
\newcommand\lpr{{'\kern-2pt}}
\newcommand\lppr{{\kern1pt''\kern-2pt}}
\newcommand\lpe{{\kern1pt '\kern-2pt e}}
\newcommand\lppe{{\kern1pt ''\kern-2pt e}}
\newcommand\lpE{{\kern1pt '\kern-2pt E}}
\newcommand\lppE{{\kern1pt ''\kern-2pt E}}
\newcommand\bs{{\backslash}}
\newcommand\ra{\rightarrow}

\newcommand\lra{\longrightarrow}
\newcommand\hra{\hookrightarrow}

\newcommand\bbe{{\mathbb E}}

\newcommand\bbq{{\mathbb Q}}
\newcommand\bbr{{\mathbb R}}
\newcommand\bbz{{\mathbb Z}}

\newcommand\cals{{\mathcal S}}

\newcommand\calz{{\mathcal Z}}

\newcommand\scirc{\kern -2pt\circ\kern -2pt}

\newcommand\aff{\on{Aff}}
\newcommand\aut{\on{Aut}}

\newcommand\inn{\on{Inn}}
\newcommand\out{\on{Out}}

\newcommand\isom{\on{Isom}}

\newcommand\gl{\on{GL}}

\newcommand\sltr{\on{SL}(2,\mathbb R)}

\newcommand\nil{\on{Nil}}

\newcommand\inv{^{-1}}

\newcommand\Pf{\on{P}\kern-2pt f}

  {\begin{list}{}{%
     \settowidth{\labelwidth}{\textsf{#1}}%
     \setlength{\leftmargin}\labelwidth
     \advance\leftmargin+\labelsep}}
  {\end{list}}

\newcommand{\kbmar}[1]
  {{\kern-4pt$^\spadesuit$}\marginpar{\rightline{\boxed{\text{#1}}}}}

\def\tr{\text{\rm{tr}}}

\def\boxit#1{\vbox{\hrule\hbox{\vrule\kern3pt
     \vbox{\kern3pt#1\kern3pt}\kern3pt\vrule}\hrule}}
\def\boxit#1{\vbox{\hrule\hbox{\vrule\kern3pt
     \vbox{\kern3pt#1\kern3pt}\kern3pt\vrule}\hrule}}

\def\rmk#1{\relax}

\def\multilimits@{\bgroup
  \Let@
  \restore@math@cr
  \default@tag
 \baselineskip\fontdimen10 \scriptfont\tw@
 \advance\baselineskip\fontdimen12 \scriptfont\tw@
 \lineskip\thr@@\fontdimen8 \scriptfont\thr@@
 \lineskiplimit\lineskip
 \vbox\bgroup\ialign\bgroup\hfil$\m@th\scriptstyle{##}$\hfil\crcr}
\def\endSb{\crcr\egroup\egroup\egroup}

\def\im{\on{Im}}
\def\id{\on{id}}
\def\angles#1{\langle{#1}\rangle}

\def\lprS{{\kern2pt '{\kern-2pt}S}}
\def\ddanger{\kern-4pt ${^{^\clubsuit}}$ \marginpar{\centerline{$^{\sqrt{}}$}}}
\def\danger#1{\kern-4pt ${^{^\clubsuit}}$ \marginpar{#1}}
\def\dangers#1{\kern-4pt ${^{^\clubsuit}}$ \marginpar{\boxt{{#1}}}}

\def\bzero{\mathbf 0}

\def\shorteq{\kern-4pt=\kern-4pt}
%%%%%%%%%%%%%%%%%%%%%%%%%%%%%%%%%%%%%%%%%%%%%%%%%%%%%%%%%%%%%%%
\newcommand{\pr}{\mbox{\rm pr}}
\def\bfa{\mathbf{a}}
\def\bfb{\mathbf{b}}

\def\bfv{\mathbf{v}}

\def\bfn{\mathbf{n}}

\def\bfx{\mathbf{x}}
\def\bfy{\mathbf{y}}
\def\bfz{\mathbf{z}}

\def\bfe{\mathbf{e}}
\def\bft{\mathbf{t}}
\def\bfzero{{\mathbf 0}}
\def\ord{\mathrm{ord}}
\def\gltr{{\mathrm{GL}}(2,\bbr)}
\def\sltz{{\mathrm{SL}}(2,\bbz)}
\def\gltz{{\mathrm{GL}}(2,\bbz)}
\def\wa{\widetilde A}
\def\wb{\widetilde B}

\def\gamsig{{\Gamma_{\kern-2pt\cals}}}
\def\tgamsig{{\wt\Gamma_{\kern-2pt\cals}}}
\def\hgamsig{{\widehat\Gamma_{\kern-2pt\cals}}}
\def\lgamsig{{\wt\Gamma_{(\cals;q,m_1,m_2)}}}
\def\gammt{\widetilde\Gamma}
\def\Sol{\text{\rm Sol}^3}
\def\Solof{{\text{\rm Sol}_1}^{\kern-2pt 4}}
\def\abar{\bar A}

\def\ahat{\hat A}

\def\bhat{\hat B}

\def\coker{\text{\rm Coker}}

\def\alphabar{\bar\alpha}
\def\betabar{\bar\beta}
\def\bbzphi{\mathbb Z_{\Phi}}
\newcommand\xy[2]{{\left[\begin{matrix} #1 \\ #2\end{matrix}\right]}}
\def\parn{\par\noindent}
\def\matNpm
{\left[\begin{array}{rr}
0& 1\\ \kern-6pt-1&0
\end{array}\right]}
\def\matNmp
{\left[\begin{array}{rr}
0& \kern-6pt-1\\ 1&0
\end{array}\right]}
\def\matNpp
{\left[\begin{array}{rr}
0& 1\\\kern2pt 1&0
\end{array}\right]}
\def\matPpm
{\left[\begin{array}{rr}
1& 0\\ 0 &\kern-6pt-1
\end{array}\right]}
\def\matPmm
{\left[\begin{array}{rr}
\kern-7pt -1& 0\\ 0 &\kern-7pt-1
\end{array}\right]}

\def\vara{\varphi(\bar\alpha)}
\def\varb{\varphi(\bar\beta)}

\newcommand\ii{$i$}
\newcommand\gr[1]{{\tt(#1)}}

%%%%%%%%%%%%%%%%%%%%%%%%%%%%%%%%%%%%%%%%%%%%%%%%%%%%%%%

\begin{document}

\title{\texorpdfstring{Infra-solvmanifolds of ${{{\text{\rm Sol}}_1}^4}$}{Solof-geometry}}

\author{Kyung Bai Lee}
\address{University of Oklahoma, Norman, OK 73019, U.S.A.}
\email{kblee@math.ou.edu}
\author{Scott Thuong}
\address{Pittsburg State University, Pittsburg, KS 66762, U.S.A.}
\email{sthuong@pittstate.edu}

\subjclass[2010]{Primary 20H15, 22E25, 20F16, 57S25}
\keywords{Solvmanifolds, Infra-solvmanifolds, $\Sol$, $\Solof$, Bieberbach
theorems, crystallographic groups}

\date{\today}
\dedicatory{}

\begin{abstract}
The purpose of this paper is to classify all compact manifolds modeled on
the 4-dimensional solvable Lie group ${{\text{\rm Sol}_1}^{\kern-2pt 4}}$, and more generally,
the crystallographic groups of  ${{\text{\rm Sol}_1}^{\kern-2pt 4}}$. 
The maximal compact subgroup of 
$\operatorname{Isom}({{\text{\rm Sol}_1}^{\kern-2pt 4}})$ is
$D_4=\mathbb Z_4\rtimes\mathbb Z_2$.
We shall exhibit an infra-solvmanifold of
${{\text{\rm Sol}_1}^{\kern-2pt 4}}$ whose
holonomy is $D_4$. This implies that all possible holonomy groups do
occur; the trivial group, $\mathbb Z_2$ (5 families),
$\mathbb Z_4$, $\mathbb Z_2\times\mathbb Z_2$ (5 families),  and 
$\mathbb Z_4\rtimes\mathbb Z_2$ (2 families).

\end{abstract}

\maketitle

The 4-dimensional Lie group $\Solof$ is the subgroup of $\gl(3,\bbr)$
 defined as
\[
\Solof= \left\{
\left[
\begin{matrix} 1 &x &z\\
0 &e^u &y\\
0 &0 &1 \end{matrix}  
\right] \Bigg\vert x,y,z,u \in \bbr  \right\}.
\]
The nilradical of $\Solof$ is the 3-dimensional Heisenberg group $\Nil$
(the elements of $\Solof$ with $u=0$).  It has 1-dimensional center (the
elements of $\Solof$ with $x=y=u=0$), and the quotient of $\Solof$ by the
center is isomorphic to $\Sol$.  Recall that both $\Nil$ and $\Sol$ are
model spaces for 3-dimensional geometry.
Let $C$ be a maximal compact subgroup of $\aut(\Solof)$. A cocompact
discrete subgroup
\[ 
\varPi \subset \Solof \rx C
\] 
is a \emph{crystallographic group} of $\Solof$. 
The motivation for this arises from the crystallographic groups of
Euclidean space $\bbr^n$, that is, the cocompact discrete subgroups of
$\isom(\bbr^n)=\bbr^n \rx \on{O}(n,\bbr)$.  In general, the classification
of crystallographic groups of nilpotent Lie groups, or certain well-behaved
solvable Lie groups (such as $\Solof$), is an important question.  For
example, crystallographic groups of $\bbr^n$ are classified for
$n\leq4$. See
\cite{4DBieberbachGroup} for a classification. Dekimpe provides a
classification of crystallographic groups of $4$-dimensional nilpotent Lie
groups in \cite{Dekimpe}. A classification of crystallographic groups of
$\Sol$ is given by K.Y. Ha and J.B. Lee in \cite{HL}.

Since the Bieberbach theorems generalize to $\Solof$ \cite{DLR}, the translation 
subgroup of $\varPi$, $\varPi \cap \Solof$, is of finite index in $\varPi$, and 
is a cocompact discrete subgroup (that is, a lattice) of $\Solof$. Fortunately 
for us, the maximal compact subgroup $C$ is very small. It is
$D_4$, the dihedral group of 8 elements.
Therefore, all crystallographic groups of $\Solof$ are extensions of a
lattice by a subgroup $\Phi$ of the finite group $D_4$. On the other hand,
there are many non-isomorphic lattices, which makes things quite
complicated. We shall classify the crystallographic groups of $\Solof$
(this will include the classification of crystallographic groups of
$\Sol$).

A crystallographic group $\varPi \subset \Solof \rx C$ acts naturally on $
\Solof$; that is, for $(a,\alpha) \in \varPi$, $x \in \Solof$,
$(a,\alpha)\cdot x=a \alpha(x)$. The orbit space of $\Solof$ by the action
of a torsion free crystallographic group $\varPi$, $\varPi \bs \Solof$, is
an \emph{infra-solvmanifold} of $\Solof$. By the generalized Bieberbach
theorems, two infra-solvmanifolds of $\Solof$, say $\varPi \bs \Solof$ and
$\varPi' \bs \Solof$, are (affinely) diffeomorphic precisely when $\varPi$
and $\varPi'$ are isomorphic.  We shall exhibit an infra-solvmanifold of
$\Solof$ with maximal holonomy $D_4$, the largest possible.  This implies
that all possible holonomy groups do occur; the trivial group, $\bbz_2$ (5
families), $\bbz_4$, $\bbz_2\x\bbz_2$ (5 families), and $\bbz_4\rx\bbz_2$
(2 families).
\medskip

This paper is organized as follows.
In section 1, we determine $\aut(\Solof)$, and show the dihedral group
$D_4$ of order 8 is the maximal compact subgroup.

In section 2, we recall the classification of lattices of $\Sol$: all are
isomorphic to $\bbz^2 \rx_\cals \bbz$, for some $\cals\in\sltz$,
$\tr(\cals)>2$.

In section 3, we recall the result of \cite{HL} that any crystallographic
group $Q$ of $\Sol$ can be viewed as an extension
\[
1 \ra \bbz^2 \ra Q \ra \bbzphi \ra 1,
\]
where $\bbzphi$ itself is an extension
$1 \ra \bbz \ra \bbzphi \ra \Phi \ra 1$ for
$\Phi \subset D_4$. Using the results of \cite{HL}, Theorem
\ref{abstract-kernel} classifies all possible abstract kernels
$\varphi: \bbzphi \ra \gltz$.

In section 4, we study the classification of $\Sol$-crystallographic
groups, in a similar fashion to that in \cite{HL}. 
We show an isomorphism between $H_{\varphi}^2(\bbzphi, \bbz^2)$ and 
$H^1(\Phi, \coker(I-\cals))$, which greatly simplifies the calculations 
in \cite{HL}. The list is deferred until section \ref{solof-cryst}. 

In section 5, the classification of $\Solof$-lattices as lifts of
$\Sol$-lattices is given.

In section 6, the main classification theorem of crystallographic groups of
$\Solof$, Theorem \ref{ClassificationSolof-geometry}, is proved. We find 8
categories; some are never torsion free, some are always torsion free, and
some contain mixed cases. We determine this by examining the action of a
crystallographic group on $\Solof$.  This theorem also serves as a
classification of $\Sol$-crystallographic groups, by considering the groups
modulo the center of $\Solof$.

In section 7, we first show that $\Solof$ admits an affine structure. It is
much easier to represent crystallographic groups using this affine
structure.  We exhibit two examples of infra-$\Solof$ manifolds. The first
one is where the lattice is ``non-standard''. The second one is a space
with the maximal holonomy group $D_4$. Both yield non-orientable manifolds.

All calculations were done by the program Mathematica \cite{MA}, and
were  hand-checked.

\section
{\texorpdfstring{The automorphism groups of  $\Sol$ and $\Solof$}
{The automorphism groups of  Sol and Solof}}

\label{section1}
The group $\Sol=\bbr^2\rx\bbr$ has group operation
\begin{align*}
(\bfx,u)(\bfy,v)&=(\bfx+E^u\bfy,u+v) \mbox{, where } E^u=
\left[\begin{matrix} e^{-u} & 0\\ 0 & e^{u}\end{matrix}\right].
\end{align*}
Let $\alpha$ be an automorphism of $\Sol$.
Since $\bbr^2$ is the nilradical (maximal normal nilpotent
subgroup) of  	$\Sol$, $\alpha$ induces an automorphism
$A$ of $\bbr^2$, and hence, also an automorphism $\abar$ of the quotient
$\bbr$.
Thus, there is a homomorphism
\[
\CD
\aut(\Sol) @>>> \aut(\bbr^2)\x\aut(\bbr)\\
\alpha     @>>> (A,\abar)
\endCD
\]

\noindent
The following is known.

\begin{proposition}[{\cite[p.2]{HL}}]
\label{abar-auto-prop}

We have $\aut(\Sol) \cong \Sol \rx (\bbr^+ \x D_4)$, where $D_4$ is the
dihedral group with 8 elements.  Under this isomorphism, $\Sol$ acts as
inner automorphisms, and $(\bbr^+ \x D_4)$ is identified with the group of
matrices
$\bbr^+ \x  D_4= \left\langle k \left[\begin{matrix} 0 & -1 \\ 1 & 0
\end{matrix}\right],
k \left[\begin{matrix} 1& 0 \\ 0 & -1
\end{matrix}\right] \right\rangle$, $k > 0$, 
($k=1$ yields $D_4$), $A \in \bbr^+ \x  D_4$  
acts on $\Sol$ as

\[
A: \left(\xy{x}{y},u\right) \longmapsto \left(A\xy{x}{y}, \abar u\right).
\]
($\abar = +1$ if $A$ is diagonal,
$\abar = -1$ otherwise.)
\end{proposition}

We now turn our attention to $\Solof$, embedded in $\gl(3,\bbr)$ as
\[
\Solof= \left\{
s(x,y,z,u):= \left[
\begin{matrix} 1 &x &z\\
0 &e^u &y\\
0 &0 &1 \end{matrix}  
\right]\ \ \Bigg\vert\ x,y,z,u \in \bbr  \right\}.
\]
By writing every element as a product
$$
\left[
\begin{matrix} 1 &e^u x &z\\
0 &e^u &y\\
0 &0 &1 \end{matrix}
\right]
=
\left[
\begin{matrix} 1 &x &z\\
0 &1 &y\\
0 &0 &1 \end{matrix}
\right]
\left[
\begin{matrix} 1 &0 &0\\
0 &e^u &0\\
0 &0 &1 \end{matrix}
\right]:=\bfx \bfe^u,
$$
we see that $\Solof$ is the semi-direct product $\Nil\rx\bbr$, where
$$
(\bfx,u)\cdot(\bfy,v)=(\bfx\cdot \bfe^u \bfy \bfe^{-u},u+v).
$$

$\Nil$ is the nil-radical of $\Solof$, and the center of $\Nil$,
$\bbr=\{s(0,0,z,0) \vert z\in\bbr\}$, is also the center of $\Solof$.
Evidently, $\Solof/\bbr \cong \Sol$. Thus we have a commuting diagram with
exact rows and columns:
\begin{equation}
\label{solof-to-sol}
\CD
@. 1 @. 1 @. @.\\
@.  @VVV @VVV @. @.\\
@. \bbr =\calz(\Nil) @= \bbr =\calz(\Solof) @. @.\\
@. @VVV @VVV @. @.\\
1 @>>> \Nil @>>> \Solof=\Nil\rx\bbr @>>> \bbr @>>> 1\\
@. @VVV @VVV @| @.\\
1 @>>> \bbr^2 @>>> \Sol=\bbr^2\rx\bbr @>>> \bbr @>>> 1\\
@.  @VVV @VVV @. @.\\
@. 1 @. 1 @. @.
\endCD
\end{equation}
The rows split, but the columns do not.

An automorphism $\hat{\alpha}$ of $\Solof$ induces automorphisms of the
center $\bbr$ and the quotient $\Sol$:
$$
\CD
\aut(\Solof) @>>> \aut(\calz(\Solof))\x\aut(\Sol) @>>> \aut(\bbr)\x \aut(\bbr^2)\x\aut(\bbr) \\
\hat{\alpha}     @>>> (\ahat,\alpha) @>>> (\ahat,A, \abar).
\endCD
$$
Similar to the case of $\Nil$, $\ahat$ is multiplication by $\det(A)$.
Conversely, every automorphism of $\Sol$ induces an automorphism of
$\Solof$, and $\aut(\Sol)$ lifts to a subgroup of $\aut(\Solof)$.  More
specifically, we have

\begin{proposition}
\label{autosol14} 
\begin{align*}
\aut(\Solof) \cong \bbr \rx \aut(\Sol) &\cong \bbr \rx (\Sol \rx (\bbr^+ \x D_4))\\
&\cong (\bbr\x\Sol)\rx (\bbr^+ \x D_4),
\end{align*}
where $\Sol \cong \inn(\Solof)$.
The group $\bbr$ is the kernel of the homomorphism
\[
\aut(\Solof) \ra \aut(\Sol).
\] 
The automorphism $\hat k$, $k\in\bbr$, is given by
\[
\hat k:\
\left[
\begin{matrix} 1 &e^u x &z\\
0 &e^u &y\\
0 &0 &1 \end{matrix}
\right]
\longmapsto
\left[
\begin{matrix} 1 &e^u x &z+ku\\
0 &e^u &y\\
0 &0 &1 \end{matrix}
\right].
\]
This commutes with the inner automorphisms of $\Solof$, and $A\in \bbr^+ \x
D_4$ acts on this $\bbr$ by $^A \hat k=(\ahat\cdot\abar)\cdot \hat k$.
\end{proposition}

\begin{proof}
We have seen that the image of $\aut(\Solof)$ under
\[
\aut(\Solof) \ra \aut(\bbr) \x \aut(\Sol)  \ra \aut(\bbr)\x
\aut(\bbr^2)\x\aut(\bbr)
\]
is determined by its image in $\aut(\bbr^2)$.
On the other hand, $\aut(\Sol)$ lifts back to $\aut(\Solof)$.
Recall the isomorphism $\aut(\Sol)\cong\Sol \rx (\bbr^+ \x D_4)$ given 
in Proposition \ref{abar-auto-prop}.
First, $\Sol\subset\Sol\rx  (\bbr^+ \x D_4)$, corresponding to
the inner automorphisms of $\Sol$ lifts to the inner automorphisms of 
$\aut(\Solof)$.
Note that $\inn(\Solof)=\inn(\Sol)\cong\Sol$.

For the subgroup $\bbr^+\x D_4$ of $\aut(\Sol)$, we have that a diagonal or
off-diagonal matrix $A \in \gl(2,\bbr)$ can be lifted to an automorphism of
$\Solof$:
\[
A=
\kern-4pt
\left[\begin{matrix}
a&b\\ c&d
\end{matrix}\right]
:\
\left[
\begin{matrix} 1 &e^u x &z\\
0 &e^u &y\\
0 &0 &1 \end{matrix}
\right]
\longmapsto
\left[
\begin{matrix} 1
&\kern-7pt e^{\abar u}(a x+b y)
&\kern-7pt
{\scriptstyle\frac12 (a b x^2+2 b c x y+c d y^2+2(a d-b c)z)}\\
0 &e^{\abar u} &(c x+d y)\\
0 &0 &1 \end{matrix}
\right].
\]
	
\noindent
This formula is valid only for the cases when
either $a=d=0$ ($\abar=-1$) or $b=c=0$ ($\abar=+1$).

The kernel of $\aut(\Solof)\ra\aut(\Sol)$ is the group of crossed
homomorphisms $Z^1(\Sol,\bbr)$. Since $\Sol$ acts trivially on the center
$\bbr$, the crossed homomorphisms become genuine homomorphisms, and
\[
Z^1(\Sol,\bbr)=\hom(\Sol,\bbr)=\hom(\bbr,\bbr)=\bbr.
\]
Thus we have a splitting $\aut(\Solof) \cong  \bbr \rx \aut(\Sol).$\qedhere
\end{proof}

\begin{proposition}
\label{maximalCompact}
The dihedral group  $D_4=\left\langle \left[\begin{matrix} 0 & -1 \\ 1 & 0
\end{matrix}\right],
 \left[\begin{matrix} 1& 0 \\ 0 & -1
\end{matrix}\right] \right\rangle$ 
is the maximal compact subgroup of both $\aut(\Sol)$ and $\aut(\Solof)$. 
Furthermore, it is unique up to conjugation.
\end{proposition}

\begin{proof}
The statement on uniqueness follows from  \cite[Theorem 3.1]{mostow}.
\end{proof}

\begin{remark}
Up to the $\bbr=Z^1(\Sol,\bbr)$-factor, $\aut(\Solof)=\aut(\Sol)$, and
we may denote an automorphism in $D_4 \subset \aut(\Solof)$ by a $2 \times
2$ matrix $A$ only (suppressing even $\abar$ and $\ahat$) when there is no
confusion likely.
\end{remark}

\begin{remark}
Both $\Sol$ and $\Solof$ admit a left-invariant metric so that
$\isom(\Sol)=\Sol\rx D_4$ and $\isom(\Solof)=\Solof\rx D_4$.
\end{remark}

\section
{\texorpdfstring
{The Lattices of $\Sol$}
{The Lattices of Sol}}

Let
$
\cals=
\left[
\begin{matrix} \sigma_{11} &\sigma_{12}\\
\sigma_{21} &\sigma_{22} \end{matrix}  \right]
\in\text{SL}(2,\bbz)
$
with $\tr(\cals) >2$. Such a matrix has two positive eigenvalues satisfying
$\lambda + \frac 1 \lambda>0$.
Then we can find a diagonalizing matrix $P \in \gltr$, with $\det(P)=1$,
diagonalizing $\cals$: $PSP \inv = \Delta$.

\begin{notation}
For uniformity of statements, we always take
\begin{align*} \Delta
=\left[
\begin{matrix} \tfrac{1}{\lambda}&0\\
0&\lambda \end{matrix}
\right] \mbox{ with } \tfrac{1}{\lambda}<1<\lambda.
\end{align*}
\end{notation}

With such $P$ and $\Delta$ for $\cals$, the relation $PSP \inv = \Delta$
allows us to embed the semidirect product $\bbz^2 \rx_\cals \bbz$ as a
lattice of $\Sol$,
\begin{align}
\label{def-phi}
\phi: \bbz^2\rx_\cals\bbz &\lra \kern24pt\Sol\\
\notag \left(\xy{x}{y},u\right)
&\longmapsto
\notag \left(P\xy{x}{y},u\ln(\lambda)\right).
\end{align}

It maps the generators as follows:
\begin{align}
\label{sol-iso}
\notag
\bfe_1 =\left(\left[\begin{matrix} 1\\ 0 \end{matrix} \right],0\right) &\longmapsto \bft_1=P \bfe_1\\
\bfe_2=\left(\left[\begin{matrix} 0\\ 1 \end{matrix} \right],0\right) &\longmapsto \bft_2=P \bfe_2\\
\notag
\bfe_3=\left(\left[\begin{matrix} 0\\ 0 \end{matrix} \right],1\right) &\longmapsto \bft_3=(\bfzero,\ln(\lambda)).
\end{align}
We denote image of $\bbz^2\rx_\cals\bbz$ by $
\gamsig=\angles{\bft_1,\bft_2,\bft_3} \subset\Sol,
$ which has relations

\begin{align}
\label{3Lattice-presentation}
\begin{split}
[\bft_1,\bft_2]=1,\ \ \
\bft_3\cdot \bft_1\cdot \bft_3\inv
= \bft_1^{\sigma_{11}}\cdot \bft_2^{\sigma_{21}},\ \ \
\bft_3\cdot \bft_2\cdot \bft_3\inv= \bft_1^{\sigma_{12}}\cdot \bft_2^{\sigma_{22}}.
\end{split}
\end{align}

\begin{notation}
We shall refer to a lattice of $\Sol$ generated by $\bft_1, \bft_2, \bft_3$ of the form
in assignment (\ref{sol-iso}) as a \emph{standard lattice} of $\Sol$.
\end{notation}

Conversely, any lattice of $\Sol$ is isomorphic to such a $\gamsig$
as the following proposition shows. We say $\cals, \cals' \in \sltz$ are
\emph{weakly conjugate} if and only if  $\cals'$ is conjugate, via an
element of $\gltz$, to
$\cals$ or ${\cals} \inv$.

\begin{proposition}[{\cite[Theorem 3.4]{HL}}]
\label{lattice-a=0}
There is a one-one correspondence between the isomorphism classes of
$\Sol$-lattices and the weak-conjugacy classes of
$\cals \in \sltz$ with $\tr(\cals)>2$.
Therefore, any lattice  of $\Sol$ is conjugate to $\gamsig$,
for some $\cals$, by an element of $\aff(\Sol) = \Sol \rx \aut(\Sol)$.
\end{proposition}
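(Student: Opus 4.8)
The plan is to exhibit the assignment $\cals\mapsto[\gamsig]$ as a well-defined bijection from weak-conjugacy classes of matrices in $\sltz$ with $\tr>2$ onto isomorphism classes of $\Sol$-lattices, and to extract the inner-conjugacy statement from the surjectivity argument. Well-definedness is the easy direction: if $\cals'=B\cals^{\pm1}B\inv$ for some $B\in\gltz$, then $B$ on $\bbz^2$ together with $\pm\mathrm{id}$ on the $\bbz$-quotient patches into an isomorphism $\bbz^2\rx_\cals\bbz\cong\bbz^2\rx_{\cals'}\bbz$, and the explicit embeddings $\phi$ of \eqref{def-phi} identify these with $\gamsig$ and $\Gamma_{\cals'}$. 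The two substantive tasks are then surjectivity and injectivity.

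For surjectivity I would start with an arbitrary lattice $\Gamma\subset\Sol$ and recover the normal form. Because $\Sol$ is simply connected and completely solvable with nilradical $\bbr^2$, the structure theory of solv-lattices gives that $L:=\Gamma\cap\bbr^2$ is a rank-$2$ (cocompact) lattice in $\bbr^2$ and that $\Gamma/L$ is infinite cyclic; the one point that is not formal is the discreteness of the image of $\Gamma$ under the projection $\Sol\to\bbr$, $(\bfx,u)\mapsto u$, which I would derive from cocompactness. Pick a generator $\gamma_3=(\bfv,u_0)$ of $\Gamma/L$ with $u_0>0$ (replacing $\gamma_3$ by $\gamma_3\inv$, hence $\cals$ by $\cals\inv$, if $u_0<0$ -- this is exactly where passing to $\cals\inv$, i.e.\ weak conjugacy, enters). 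Conjugation by $\gamma_3$ acts on $L$ as $E^{u_0}$, and writing $E^{u_0}$ in a $\bbz$-basis of $L$ yields $\cals\in\sltz$ with eigenvalues $e^{\pm u_0}$, so $\tr\cals=2\cosh u_0>2$ and $\Gamma\cong\gamsig$.

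To upgrade isomorphism to conjugacy inside $\Sol$, I would track the basis change explicitly. Writing $L=R\bbz^2$, the relation above reads $R\cals R\inv=E^{u_0}=\Delta$ with $\lambda=e^{u_0}$, while $P\cals P\inv=\Delta$ by construction; since $\Delta$ has distinct eigenvalues, $RP\inv$ commutes with $\Delta$ and is diagonal, so $R=\mathrm{diag}(p,q)\,P$. A scalar automorphism from the $\bbr^+$-factor of $\aut(\Sol)$ (Proposition \ref{abar-auto-prop}) normalizes $\det R$ to $1$; the remaining diagonal factor is $\pm E^d$ (Lemma \ref{general-p}) and is removed by conjugation by $(\bfzero,s)$, carrying $L$ onto $P\bbz^2$. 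Finally, as $I-E^{u_0}$ is invertible, conjugation by a suitable $(\bfa,0)$ fixes $P\bbz^2$ and sends $\gamma_3$ to $(\bfzero,\ln\lambda)=\bft_3$, giving $g\Gamma g\inv=\gamsig$. The only subtlety here is that this $g$ is inner precisely up to the covolume-normalizing scalar: it is genuinely inner when $L$ has covolume $1$, and in general one needs the $\bbr^+$ factor.

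For injectivity I would show that the normal $\bbz^2$ is characteristic in $\bbz^2\rx_\cals\bbz$. Since $\cals$ is hyperbolic with eigenvalues $e^{\pm u_0}\neq1$ (no root of unity), $\bbz^2$ is the Fitting subgroup -- equivalently the isolator of the commutator subgroup $(\cals-I)\bbz^2$ -- hence invariant under every automorphism. Thus any isomorphism $\bbz^2\rx_\cals\bbz\to\bbz^2\rx_{\cals'}\bbz$ restricts to $B\in\gltz$ on $\bbz^2$ and induces $\epsilon\in\{\pm1\}$ on the quotient; comparing the conjugation actions of the two chosen generators on $\bbz^2$ gives $B\cals B\inv=\cals'^{\,\epsilon}$, i.e.\ $\cals$ and $\cals'$ are weakly conjugate. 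Together with well-definedness this yields injectivity and the bijection. I expect the main obstacle to be the structure-theory input of the second paragraph (that $\Gamma\cap\bbr^2$ is a full lattice with infinite-cyclic quotient); once that is secured, the rest is explicit integral linear algebra governed by Lemma \ref{general-p}.
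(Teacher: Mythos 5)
Your proposal is correct and follows essentially the same route as the paper: intersect with the nilradical, normalize the generators by conjugation, and use the fact that $\bbz^2$ is the discrete nilradical (your Fitting-subgroup argument) to extract weak conjugacy from an isomorphism. Your observation that the covolume-normalizing step uses the $\bbr^+$ factor of $\aut(\Sol)$ rather than an inner automorphism is accurate --- the paper's own proof conjugates by $(0,\tfrac{1}{\sqrt{\det(P)}}I)\in\aff(\Sol)$ at exactly this point.
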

\begin{proof}
The isomorphism statement follows from Theorem 3.4 in \cite{HL}.
The conjugacy
statement follows from Theorem \ref{BiebSol} below. This can also be seen by direct
computation, as we do for $\Solof$ lattices in Proposition \ref{solofShape}.  
\qedhere
\end{proof}

\section
{\texorpdfstring
{Compatibility of $\cals$ with automorphisms}
{Compatibility of cals with automorphisms}}

Both $\Sol$ and $\Solof$ are type (R) Lie groups that admit generalizations of Bieberbach's 
theorems for crystallographic groups of $\bbr^n$ \cite{seif-book, DLR}.

\begin{theorem}[{\cite[Theorem 8.3.4 and Theorem 8.4.3]{seif-book}}]
\label{BiebSol}
Let $G$ denote either $\Sol$ or $\Solof$, and $C$ denote a maximal compact subgroup of $\aut(G)$.
{\rm{(1)}} 
For a crystallographic group $\varPi \subset G \rx C$ of $G$,  the translation 
subgroup $\varPi \cap G$ is a lattice of $G$, with $\Phi := \varPi/(\varPi \cap G) \subset C$ finite, the holonomy group.
		
{\rm{(2)}} 
Any isomorphism between two crystallographic groups of $G$ is conjugation
by an element of $\aff(G)=G\rx\aut(G)$.
\end{theorem}

When $G$ is either $\Sol$ or $\Solof$, $C$ is conjugate in $\aut(G)$ to
$D_4$ (Proposition \ref{maximalCompact}). Therefore, we can assume that
$C=D_4$ in either case.
We will see that every $\Solof$-crystallographic group $\varPi \subset
\Solof \rx D_4$ projects to some $\Sol$-crystallographic group $Q \subset
\Sol \rx D_4$ under the natural projection $\Solof \rx D_4 \ra \Sol \rx
D_4$.  Therefore, we first recall the classification of
$\Sol$-crystallographic groups in \cite{HL}.  We use different notation
that is more amenable to lifting to the $\Solof$ case.

\begin{proposition}
\label{puttingToLattice}
Any crystallographic group $Q'$ of $\Sol$ can be conjugated in $\aff(\Sol)$
to $Q \subset \Sol \rx D_4$ so that $Q \cap \Sol = \gamsig$. That is, the
translation subgroup of $Q$ is a standard lattice of $\Sol$, generated by
$\bft_1$, $\bft_2$, and $\bft_3$ as in {\rm (\ref{sol-iso})}. Thus, $Q$ is
generated by $\angles{\bft_1, \bft_2, \bft_3}$, and at most two isometries
of the form $(\bft_1^{a_1} \bft_2^{a_2} \bft_3^{a_3}, A)\in\Sol\rx D_4$,
where $a_i$ are rational numbers.
\end{proposition}

\begin{proof}
This follows from Theorem \ref{BiebSol}.
and Proposition \ref{lattice-a=0}.   \qedhere
\end{proof}

We will assume our $\Sol$--crystallographic group $Q$ is embedded in $\Sol
\rx D_4$ as in Proposition \ref{puttingToLattice}.  Note that $Q \cap
\bbr^2 = \angles{\bft_1, \bft_2} \cong \bbz^2$ is a lattice of $\bbr^2$.
Denote $Q/\angles{\bft_1, \bft_2} $ by $\bbzphi$ so that we have the
commuting diagram:

\begin{equation}
\label{coreClass}
\CD
@. 1 @. 1 @.  @.\\
@. @VVV @VVV @. @.\\
@. \bbz^2 @=  \bbz^2 @. @.\\
@. @VVV @VVV @. @.\\
1 @>>> \gamsig @>>> Q @>>> \Phi @>>> 1\\
@. @VV{/ \bbz^2}V @VV{/ \bbz^2}V @| @.\\
1 @>>> \bbz=\angles{\bft_3} @>>> \bbzphi @>>> \Phi @>>> 1\\
@. @VVV @VVV @. @.\\
@. 1 @. 1 @.  @.\\
\endCD
\end{equation}

To classify $Q$ as extensions of $\bbz^2$ by $\bbzphi$ as in
(\ref{coreClass}), we need all possible \emph{abstract kernels}
\[
\varphi: \bbzphi\lra\gltz.
\]
Now $\bbzphi$ is generated by
$\bft_3$ together with $\bar\alpha=(\bft_3^{a_3},A)$
(with possibly an additional generator $\bar\beta=(\bft_3^{b_3},B)$):
\[
\bbzphi=\angles{\bft_3,\bar\alpha=(\bft_3^{a_3},A),
			\bar\beta=(\bft_3^{b_3},B)}.
\]

Note we only need to consider $\Phi \subset D_4$ up to conjugacy. By
definition, $\varphi(\bft_3)=\cals $. Since $\gamsig=\angles{\bft_1,
\bft_2, \bft_3}$ is embedded in $\Sol$ as in (\ref{def-phi}), as an
automorphism of $\bbz^2=\angles{\bft_1,\bft_2}$,
$\alphabar=(\bft_3^{a_3},A)$ should map by $\varphi$ to
$\varphi(\alphabar)=\cals^{a_3}\wa$, where
\begin{align*}
\cals^{a_3} &= P \inv \Delta^{a_3} P,\\
\wa &= P \inv  A P.
\end{align*}

The action of $A \in D_4$ on $\bbz=\angles{\bft_3}$ in $\bbzphi$ is the
induced action of $A$, $\abar$, on the quotient $\bbr=\Sol/\bbr^2$.  Thus,
if $A\in D_4$ is a diagonal matrix, then $\abar=+1$. Otherwise $\abar=-1$,
see Proposition \ref{abar-auto-prop}.  So, if $\abar=+1$,
$\varphi(\alphabar)$ must commute with $\cals$.  Otherwise,
$\varphi(\alphabar)$ conjugates $\cals$ to its inverse.

Theorem \ref{abstract-kernel} below follows from Theorem 8.2 of \cite{HL}. 
In the proof we explain differences in notation.

\begin{theorem}[{\cite[cf. Theorem 8.2]{HL}}]
\label{abstract-kernel}
The following is a complete list of $\bbzphi$ and homomorphisms
$\varphi:\bbzphi\ra\gltz$ with $\varphi(\bft_3)=\cals$ and
\begin{align*}
\varphi(\bft_3^{a_3},A)&=\cals^{a_3}\wa\\
\varphi(\bft_3^{b_3},B)&=\cals^{b_3}\wb,
\end{align*}
up to conjugation in $\gltz$, that is, change of generators for
$\angles{\bft_1, \bft_2} \cong \bbz^2$.
\begin{enumerate}
		
\item[\gr{0}]
$\Phi$ is trivial,\parn
$\bbzphi=\bbz=\angles{\bft_3}$.\parn

\item[\gr{1}]
$\Phi=\bbz_2$: $A=
\left[\begin{matrix}
1 &0\\ 0& -1
\end{matrix}\right]$,\parn
$\bbzphi=\bbz=\angles{\bft_3,\bar\alpha=(\bft_3^{\frac12},A)}$.\parn
$\bullet$
$\vara=-K$ with $\det(K)=-1$,  $\tr(K)=n>0$, and $\cals=n K+I$.
\item[\gr{2a}]
$\Phi=\bbz_2$: $A=
\left[\begin{matrix}
-1 &0\\ 0& -1
\end{matrix}\right]$,\parn
$\bbzphi=\bbz\x\bbz_2=\angles{\bft_3,\bar\alpha=(\bft_3^{0},A)}$.\parn
$\bullet$
$\vara=A$, $\cals\in\sltz$ with $\tr(\cals)>2$.
\item[\gr{2b}]
$\Phi=\bbz_2$: $A=
\left[\begin{matrix}
-1 &0\\ 0& -1
\end{matrix}\right]$,\parn
$\bbzphi=\bbz=\angles{\bft_3,\bar\alpha=(\bft_3^{\frac12},A)}$.\parn
$\bullet$ $\vara=-K$ with $\det(K)=+1$,  $\tr(K)=n>2$, and $\cals=n K-I$.
\item[\gr{3}]
$\Phi=\bbz_2$: $A=
\left[\begin{matrix}
0 &1\\ 1& 0
\end{matrix}\right]$,\parn
$\bbzphi=\bbz\rx\bbz_2=\angles{\bft_3,\bar\alpha=(\bft_3^{0},A)}$.\parn
$\bullet$ $\vara=A$,
$\cals\in\sltz$ with $\tr(\cals)>2$ and $\sigma_{12}=-\sigma_{21}$.
\item[\gr{3\ii}]
$\Phi=\bbz_2$: $A=
\left[\begin{matrix}
0 &1\\ 1& 0
\end{matrix}\right]$,\parn
$\bbzphi=\bbz\rx\bbz_2=\angles{\bft_3,\bar\alpha=(\bft_3^{0},A)}$.\parn
$\bullet$
$\vara=\left[\begin{matrix}
1&0\\ 0&-1
\end{matrix}\right]$,
$\cals\in\sltz$
with $\tr(\cals)>2$ and  $\sigma_{11}=\sigma_{22}$.
\par\noindent
\item[\gr{4}]
$\Phi=\bbz_4$: $A=
\left[\begin{matrix}
0 &1\\ -1& 0
\end{matrix}\right]$,\parn
$\bbzphi=\bbz\rx\bbz_4=\angles{\bft_3,\bar\alpha=(\bft_3^{0},A)}$.\parn
$\bullet$ $\vara=A$,
$\cals\in\sltz$ with $\tr(\cals)>2$ and symmetric.
\item[\gr{5}]
$\Phi=\bbz_2\x\bbz_2$:
$A=
\left[\begin{matrix}
1 &0\\ 0& -1
\end{matrix}\right]$,
$B=
\left[\begin{matrix}
-1 &0\\ 0& -1
\end{matrix}\right]$,\parn
$\bbzphi=\bbz\x\bbz_2=\angles{\bft_3,\bar\alpha=(\bft_3^{\frac12},A),
\bar\beta=(\bft_3^0,B)}$.\parn
$\bullet$ $\vara=-K$, $\varb=B$
\hfill \gr{1}$+$\gr{2a}\parn
$\bullet$
$\cals=n K+I$, $K\in\gltz$, $\det(K)=-1$, and  $\tr(K)=n>0$.
\item[\gr{6a}]
$\Phi=\bbz_2\x\bbz_2$:
$A=
\left[\begin{matrix}
0 &1\\ 1& 0
\end{matrix}\right]$,
$B=
\left[\begin{matrix}
-1 &0\\ 0& -1
\end{matrix}\right]$,\parn
$\bbzphi=(\bbz\x\bbz_2)\rx\bbz_2=\angles{\bft_3,\bar\alpha=(\bft_3^{0},A),
\bar\beta=(\bft_3^0,B)}$.\parn
$\bullet$ $\vara=A$, $\varb=B$ \hfill \gr{3}$+$\gr{2a}\parn
$\bullet$
$\cals\in\sltz$ with $\tr(\cals)>2$ and $\sigma_{12}=-\sigma_{21}$.
\item[\gr{6a\ii}]
$\Phi=\bbz_2\x\bbz_2$:
$A=
\left[\begin{matrix}
0 &1\\ 1& 0
\end{matrix}\right]$,
$B=\def\tr{\text{\rm{tr}}}
\left[\begin{matrix}
-1 &0\\ 0& -1
\end{matrix}\right]$,\parn
$\bbzphi=(\bbz\x\bbz_2)\rx\bbz_2=\angles{\bft_3,\bar\alpha=(\bft_3^{0},A),
\bar\beta=(\bft_3^0,B)}$.\parn
$\bullet$ $\vara=
\left[\begin{matrix}
1 &0\\ 0& -1
\end{matrix}\right]$,\
$\varb=B$
\hfill \gr{3\ii}$+$\gr{2a}\parn
$\bullet$
$\cals\in\sltz$
with $\tr(\cals)>2$ and $\sigma_{11}=\sigma_{22}$.\parn
\item[\gr{6b}]
$\Phi=\bbz_2\x\bbz_2$:
$A=
\left[\begin{matrix}
0 &1\\ 1& 0
\end{matrix}\right]$,
$B=
\left[\begin{matrix}
-1 &0\\ 0& -1
\end{matrix}\right]$,\parn
$\bbzphi=\bbz\rx\bbz_2=\angles{\bft_3,\bar\alpha=(\bft_3^{0},A),
\bar\beta=(\bft_3^{\frac12},B)}$.\parn
$\bullet$ $\vara=A$, $\varb=-K$ \hfill \gr{3}$+$\gr{2b}\parn
$\bullet$
$\cals=n K-I$, where $K\in\sltz$ with $\tr(K)= n>2$; $k_{12}=-k_{21}$.
\item[\gr{6b\ii}]
$\Phi=\bbz_2\x\bbz_2$:
$A=
\left[\begin{matrix}
0 &1\\ 1& 0
\end{matrix}\right]$,
$B=
\left[\begin{matrix}
-1 &0\\ 0& -1
\end{matrix}\right]$,\parn
$\bbzphi=\bbz\rx\bbz_2=\angles{\bft_3,\bar\alpha=(\bft_3^{0},A),
\bar\beta=(\bft_3^{\frac12},B)}$.\parn
$\bullet$ $\vara=\left[\begin{matrix}
1 &0\\ 0& -1
\end{matrix}\right]$, $\varb=-K$ \hfill \gr{3\ii}$+$\gr{2b}\parn
$\bullet$
$\cals=n K-I$, where $K\in\sltz$ with $\tr(K)= n>2$;\
$k_{11}=k_{22}$.\parn
\item[\gr{7}]
$\Phi=\bbz_4\rx\bbz_2$:
$A=\matNpp, B=\matPpm,$\parn
$\bbzphi=(\bbz\x\bbz_2)\rx\bbz_2=\angles{\bft_3,\bar\alpha=(\bft_3^{0},A),
\bar\beta=(\bft_3^{\frac12},B)}$.\parn
$\bullet$
$\vara=A$, $\varb=-K$ \hfill (includes \gr{6a})\qquad  \gr{3}$+$\gr{1}\parn
$\bullet$
$\cals=n K+I$, $K\in\gltz$, $\det(K)=-1$, $\tr(K)>0$;
$k_{12}=-k_{21}$.\parn
\item[\gr{7\ii}]
$\Phi=\bbz_4\rx\bbz_2$:
$A=\matNpp, B=\matPpm$,\parn
$\bbzphi=(\bbz\x\bbz_2)\rx\bbz_2=\angles{\bft_3,\bar\alpha=(\bft_3^{0},A),
\bar\beta=(\bft_3^{\frac12},B)}$.\parn
$\bullet$
$\vara=\matPpm$, $\varb =-K$
\hfill (includes \gr{6a\ii})\qquad \gr{3\ii}$+$\gr{1}\parn
$\bullet$
$\cals=n K+I$, $K\in\gltz$, $\det(K)=-1$,  $\tr(K)=n>0$, $k_{11}=k_{22}$.\parn
\end{enumerate}
\end{theorem}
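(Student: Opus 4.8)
The plan is to turn each presentation $\bbzphi=\angles{\bft_3,\bar\alpha,(\bar\beta)}$ from the list (\ref{bbzphi-list}) into a system of matrix equations in $\gltz$ for the images $\cals=\varphi(\bft_3)$, $X=\vara$, and $Y=\varb$, and then to solve it over $\bbz$ up to simultaneous conjugacy in $\gltz$. Two kinds of relations drive everything. First, the way each generator conjugates $\bft_3$, namely $\bar\alpha\,\bft_3\,\bar\alpha\inv=\bft_3^{\abar}$, forces $X\cals X\inv=\cals^{\abar}$; by Proposition \ref{abar-auto-prop} this means $X$ commutes with $\cals$ when $A$ is diagonal ($\abar=+1$) and $X$ inverts it, $X\cals X\inv=\cals\inv$, when $A$ is off-diagonal ($\abar=-1$). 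Second, the order of each generator in $\bbzphi$ produces a power relation: an order-two $A$ gives $X^2=I$ if $a_3=0$ and $X^2=\cals$ if $a_3=\tfrac12$, while an order-four $A$ gives $X^4=I$. I will also use the determinant bookkeeping $\det X=\det A$, valid because $\det\wa=\det(P\inv A P)=\det A$ and $\det\cals^{a_3}=1$; this sign is decisive in the square-root cases below.

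First I would dispose of the commuting case $\abar=+1$. Since $\cals$ has distinct eigenvalues, its centralizer in $M_2(\bbr)$ is the plane $\bbz[\cals]=\{xI+y\cals\}$, so $X=xI+y\cals$ and Cayley--Hamilton ($\cals^2=\tr(\cals)\cals-I$) collapses each power relation to two scalar equations. For $X^2=I$ the only integral solutions in the centralizer are $X=\pm I$, and since $\wa=P\inv(-I)P=-I$ we read off $\vara=\matPmm$, case \gr{2a}. For $X^2=\cals$ one gets $x=\pm y$ with $y^2(\tr(\cals)\pm2)=1$, so $X$ is integral exactly when $\tr(\cals)\pm2$ is a perfect square $n^2$; writing $X=-K$, the constraint $\det X=\det A$ selects the branch, yielding $\det K=-1$, $\cals=nK+I$ for $A=\matPpm$ (case \gr{1}) and $\det K=+1$, $\cals=nK-I$ for $A=\matPmm$ (case \gr{2b}). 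I would then treat the inverting case $\abar=-1$, where the admissible $X$ form a coset of the centralizer consisting of integral torsion elements conjugating $\cals$ to $\cals\inv$; the task is to classify these up to $\gltz$-conjugacy. The order-two elements of determinant $-1$ split into the two non-conjugate classes of $\matNpp$ and $\matPpm$, producing the paired families \gr{3} and \gr{3\ii}, whereas an order-four element is unique up to conjugacy (represented by $\matNpm$), which is why \gr{4} has no companion. For each normal form I solve $X\cals X\inv=\cals\inv$ entrywise, reading off $\sigma_{12}=-\sigma_{21}$, or $\sigma_{11}=\sigma_{22}$, or the symmetry $\sigma_{12}=\sigma_{21}$; the positivity normalizations ($\sigma_{22}>0$, $\sigma_{11}<0$, and so on) are then arranged with the residual freedom, the centralizer of the fixed $X$ in $\gltz$.

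For the two-generator groups \gr{5}, \gr{6}, \gr{7} I would impose the single-generator conditions on $X$ and on $Y$ at once and then add the remaining relation between $\bar\alpha$ and $\bar\beta$ (a commutator for $\bbz_2\x\bbz_2$, the dihedral relation for $\bbz_4\rx\bbz_2$). When no half-integer $\bft_3$-power interacts with an inversion, this relation untwists to $XY=YX$ (resp. $(XY)^4=I$) and the family is a plain overlay of two earlier ones; this is what the annotations \gr{3}$+$\gr{2a}, $\dots$, and the ``includes \gr{6a}'' remarks record. When instead $b_3=\tfrac12$ and $A$ inverts $\bft_3$, the relation picks up a factor of $\bft_3$, so $\varphi$ must satisfy $[X,Y]=\cals^{\pm1}$ (resp. a twisted dihedral relation); this couples the inversion relation for $X$ to the square-root relation for $Y$ and is what forces the explicit shape $\vara=\wa$ together with the refined inequalities on the entries of $K$ in \gr{6b}, \gr{6b\ii}, \gr{7}, \gr{7\ii}. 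I would close each case by checking that the listed $\cals$ does lie in $\sltz$ with $\tr(\cals)>2$; for the square-root families this is just $\det(nK\pm I)=1$ and $\tr(\cals)=n^2\pm2$.

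I expect the main obstacle to be precisely the coupled cases with $b_3=\tfrac12$ (\gr{6b}, \gr{6b\ii}, \gr{7}, \gr{7\ii}): there one must solve simultaneously, over $\bbz$ and up to $\gltz$-conjugacy, an inversion relation, a square-root relation, and a nontrivial braiding relation $[X,Y]=\cals^{\pm1}$, and then sort the solutions into the correct conjugacy classes while pinning down the signs of the entries of $K$. The two supporting arithmetic facts---that integral involutions inverting $\cals$ fall into exactly the classes of $\matNpp$ and $\matPpm$, and that admitting a square root forces $\tr(\cals)=n^2\pm2$---are the heart of the one-generator cases and feed directly into these coupled ones.
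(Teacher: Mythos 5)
Your strategy is a genuine alternative to the paper's. The paper first uses a change of lattice basis by $U\in\sltz$ to put $\wa=P\inv AP$ into one of the Campbell--Trouy normal forms (Lemma \ref{CamTr}), and then Lemma \ref{czero} does the real work: it exhibits, for each normal form, the explicit one-parameter family of matrices $P$ realizing it and reads off exactly which $\cals$ are diagonalized by such a $P$; the $a_3=\tfrac12$ cases are settled by the closed-form computation $\cals^{\frac12}\wa=-\tfrac{1}{\sqrt{T\mp2}}(\cals\mp I)$, whose integrality forces $T=n^2\mp2$ and $\cals=nK\pm I$. You replace all of this by solving the relations $X\cals X\inv=\cals^{\abar}$ and the power relations directly over $\bbz$, using the centralizer $\{xI+y\cals\}$ and Cayley--Hamilton in the commuting case and entrywise equations in the inverting case. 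That is cleaner in places (your derivation of $\tr(\cals)=n^2\pm2$ and the exclusion of the $(\dagger)$ cases via $\det X=\det A$ is tidier than Lemma \ref{PPM-A30}), and it reaches the same linear conditions $\sigma_{12}=-\sigma_{21}$, $\sigma_{11}=\sigma_{22}$, $\sigma_{12}=\sigma_{21}$. Your worry about the coupled relations in \gr{6b}--\gr{7\ii} is also overstated: with $Y^2=\cals$ the dihedral/commutator relation reduces to $XYX\inv=Y\inv$, which is already equivalent to $X\cals X\inv=\cals\inv$ by Cayley--Hamilton, so those cases really are intersections of the one-generator conditions, as the paper's annotations indicate.

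The genuine gap is the realizability (converse) direction. The theorem does not classify abstract integral solutions of your matrix equations; it requires $\varphi(\bar\alpha)$ to equal $\cals^{a_3}P\inv AP$ for the \emph{given} geometric $A$ and \emph{some} diagonalizer $P$ of $\cals$, and your equations are only necessary conditions for that. Two concrete symptoms: (i) in the square-root cases both $+K$ and $-K$ satisfy $X^2=\cals$, $[X,\cals]=0$, $\det X=\det A$, and they are not $\gltz$-conjugate (opposite traces), yet only $-K$ is of the required form --- the sign is pinned down only by computing $P\inv\Delta^{\frac12}AP$ with the positive square root of $\Delta$, which your determinant bookkeeping cannot see; (ii) for fixed $\cals$ the real solutions of $X\cals X\inv=\cals\inv$ with $\tr X=0$, $\det X=-1$ form two components, only one of which is $\{P\inv AP: P\cals P\inv=\Delta\}$, so an integral solution in the wrong component is not induced by $A$. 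Moreover, to assert that every $\cals$ satisfying your linear-plus-sign conditions actually \emph{admits} a homomorphism of the prescribed form, you must produce a compatible $P$ --- this existence statement is precisely Lemma \ref{czero}, and nothing in your proposal replaces it. Your phrase ``the positivity normalizations are then arranged with the residual freedom'' works for the inequalities themselves (they follow from $\det\cals=1$, $\tr\cals>2$ and conjugation by the centralizer of the normal form), but it does not supply the diagonalizer. To complete your route you would need to prove, for each normal form $N$, that $\cals$ admits a $P\in\sltr$ with $P\cals P\inv=\Delta$ and $P\inv AP=N$ if and only if $\cals$ satisfies the stated conditions; once that is added, your argument closes.
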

\begin{proof}

The 9 families of $\Sol$-crystallographic groups in Theorem 8.2 of
\cite{HL} are labeled $E_0$, $E_1$, $E^{\pm}_2$, $E_3$, $E_5$, $E_8$,
$E_9$, $E_{10}$, and $E_{11}$.  The table below shows our notation
convention:
\medskip

\centerline{
\begin{tabular}{| p{.8cm} | p{.8cm} | p{.8cm} | p{.8cm} | p{.8cm} | p{.8cm} | p{.8cm} | p{.8cm} | p{.8cm} | p{.8cm} |}
\hline
$E_0$ & $E_1$ & $E^{+}_2$ & $E^{-}_2$  & $E_3$ & $E_5$ & $E_8$ & $E_9$ & $E_{10}$  & $E_{11}$
\\ \hline
\gr{0} & \gr{2a} & \gr{2b} & \gr{1} & \gr{3}, \newline \gr{3\ii} & \gr{5} & \gr{6a}, \newline \gr{6a\ii} & \gr{6b}, \newline \gr{6b\ii} & \gr{4}  &\gr{7}, 
\newline \gr{7\ii} \\ \hline 
\end{tabular}}

From Theorem 8.2 of \cite{HL}, $\varphi(\alphabar)=\varphi(\bft_3^{0},A)$
where $A=\matNpp \in D_4$ can act on $\angles{\bft_1, \bft_2} \cong \bbz^2$
in two different ways: either $P \inv A P = \matNpp$ or $P \inv A P =
\matPpm$.

In Theorem 8.2 of \cite{HL}, cases $E_3$, $E_8$, $E_{9}$, and $E_{11}$
contain such a holonomy element, and therefore we split each into two
cases, depending on how $\varphi(\alphabar)$ acts on $\angles{\bft_1,
\bft_2} \cong \bbz^2$. We will see that one case always lifts to
crystallographic groups of $\Solof$ with torsion, whereas the other can
lift to torsion free crystallographic groups.

When $\alphabar = (\bft_3^{\frac12}, A)$, $A$ is necessarily diagonal of
order 2, and $\varphi(\alphabar) = P \inv \Delta^{\frac12} A P = -K$, where
$(-K)^2=K^2 = \cals$.  Letting $n=\tr(K)$, it follows that $\cals = nK+I$
when $ \det(K)=-1$, and $\cals = nK-I$ when $\det(K)=1$. This applies to
the cyclic holonomy cases \gr{1}, \gr{2b}.

When $\alphabar = (\bft_3^{0}, A)$,  $\varphi(\alphabar) = P \inv A P$. 
If $A=-I$, $\varphi(\alphabar)=P(-I)P\inv = -I$ (regardless
of $\cals$ and $P$). For other choices of $A$, we have:

\noindent
(1) $P\inv \matNpm P = \matNpm$ if and only if
$P=\pm \left[\begin{array}{rr}
\cos t&\sin t\\ \kern-6pt-\sin t& \cos t
\end{array}\right]$.\\
$\cals$ is diagonalized by such a $P$ if and only if 
$\sigma_{12}=\sigma_{21}$.

\noindent
(2) $P\inv \matNpp P = \matNpp$ if and only if
$P=\pm\left[\begin{matrix}
\cosh t&\sinh t\\ \sinh t& \cosh t
\end{matrix}\right]$.\\
$\cals$ is diagonalized by such a $P$ if and only if $\sigma_{12}=-\sigma_{21}$.
 
\noindent
(3) $P\inv \matNpp P = \matPpm$ if and only if
$P=\pm \frac{1}{\sqrt{2}}\left[\begin{array}{rr}
t& -\frac{1}{t}\\ t &\frac{1}{t}
\end{array}\right]$, $t\not=0$.\\
$\cals$ is diagonalized by such a $P$ if and only if $\sigma_{11}=\sigma_{22}$.

This applies to the cyclic holonomy cases \gr{3}, \gr{3\ii}, and \gr{4}, and forces the stated conditions on $\cals$. The two generator cases follow from the cyclic cases.
\qedhere
\end{proof}

\section
{\texorpdfstring
{Crystallographic groups of $\Sol$}
{Crystallographic groups of Sol}}

With a fixed abstract kernel $\varphi:\bbzphi\ra\gltz$ from Theorem \ref{coreClass}, 
the set of all
equivalence classes of extensions $Q$ in (\ref{coreClass})
is in one-one correspondence with the group $H_{\varphi}^2(\bbzphi, \bbz^2)$. 
The following theorem greatly simplifies the computations in \cite{HL}.

\begin{theorem}
\label{H1-cokernel-theorem}
For each homomorphism $\varphi: \bbzphi\ra \on{GL}(2,\bbz)$, in {\rm Theorem \ref{abstract-kernel}},
we have an
isomorphism $H_{\varphi}^2(\bbzphi; \bbz^2) \cong H^1(\Phi;\coker(I-\cals))$
where 
\[
\coker(I-\cals)\cong (I-\cals)\inv\bbz^2/\bbz^2\subset T^2
\]
is a finite abelian group.
So, the set of
equivalence classes of extensions $Q$,
\[
1\lra \bbz^2\lra Q\lra \bbzphi\lra 1,
\]
is in one-one correspondence
with $H^1(\Phi;\coker(I-\cals))$.
\end{theorem}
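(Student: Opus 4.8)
The plan is to feed the group extension $1\ra\bbz\ra\bbzphi\ra\Phi\ra 1$ into the Lyndon--Hochschild--Serre spectral sequence with coefficients in the $\bbzphi$-module $\bbz^2$, where the module structure is the abstract kernel $\varphi$. This is the natural tool precisely because $\bbzphi$ is assembled from the infinite cyclic group $\angles{\bft_3}$ and the finite group $\Phi$, while $\bbz^2$ is exactly the kernel of the extensions $Q$ we wish to classify. Thus $H^2_{\phi}(\bbzphi,\bbz^2)$ is the abutment in total degree $2$, computed from the $E_2$-page $E_2^{p,q}=H^p(\Phi;H^q(\bbz;\bbz^2))$.

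The key simplification comes from the inner cohomology $H^q(\bbz;\bbz^2)$, where $\bbz=\angles{\bft_3}$ acts through $\cals$. Since $\bbz$ is infinite cyclic only $q=0,1$ survive, with $H^0(\bbz;\bbz^2)=\ker(I-\cals)$ and $H^1(\bbz;\bbz^2)=\coker(I-\cals)$. Because $\tr(\cals)>2$ makes $\cals$ hyperbolic with no eigenvalue equal to $1$, we have $\det(I-\cals)=2-\tr(\cals)\neq 0$; hence $H^0(\bbz;\bbz^2)=\ker(I-\cals)=0$ and $\coker(I-\cals)=\bbz^2/(I-\cals)\bbz^2$ is a finite abelian group of order $\tr(\cals)-2$. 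The identification $\coker(I-\cals)\cong(I-\cals)\inv\bbz^2/\bbz^2\subset T^2$ asserted in the statement is then obtained simply by multiplying coset representatives by $(I-\cals)\inv$.

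The vanishing of the entire bottom row $E_2^{p,0}=H^p(\Phi;0)=0$ forces the spectral sequence to collapse. In total degree $2$ the possible graded pieces are $E_\infty^{0,2}$, $E_\infty^{1,1}$, $E_\infty^{2,0}$; the first vanishes because $H^2(\bbz;\bbz^2)=0$, the third because it lies in the zero row, so only $E_\infty^{1,1}$ can contribute. Every differential touching $E_2^{1,1}$ has a zero source or target: the outgoing $d_2$ lands in $E_2^{3,0}=H^3(\Phi;0)=0$, the incoming $d_2$ comes from $E_2^{-1,2}=0$, and for $r\geq 3$ the bidegree shift leaves the two surviving rows. Hence $E_\infty^{1,1}=E_2^{1,1}$ and the filtration on the abutment has a single nonzero graded piece, giving $H^2_{\phi}(\bbzphi,\bbz^2)\cong H^1(\Phi;\coker(I-\cals))$.

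The step I expect to require the most care is pinning down the $\Phi$-module structure that the spectral sequence places on $\coker(I-\cals)$, and confirming it is the one intended in the displayed $H^1$. An element $\alphabar\in\Phi$ acts on $H^1(\bbz;\bbz^2)$ through the combined conjugation on $\angles{\bft_3}$, which sends $\bft_3\mapsto\bft_3^{\abar}$ with $\abar=\pm 1$ as in Proposition \ref{abar-auto-prop}, together with the coefficient action $\varphi(\alphabar)=\cals^{a_3}\wa$ on $\bbz^2$. When $\abar=+1$ the matrix $\varphi(\alphabar)$ commutes with $\cals$ and descends directly to the coinvariants; when $\abar=-1$ it conjugates $\cals$ to $\cals\inv$, and one must carry the sign flip on $\bft_3$ to see that the induced map on $\coker(I-\cals)$ is again well defined. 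Once this action is fixed, the resulting $H^1(\Phi;\coker(I-\cals))$ is exactly the group computable by Lemma \ref{cocycleConditions}. Finally, since $H^2_{\phi}(\bbzphi,\bbz^2)$ is by definition in bijection with the equivalence classes of extensions $1\ra\bbz^2\ra Q\ra\bbzphi\ra 1$ realizing the abstract kernel $\varphi$, the isomorphism yields the claimed one-one correspondence with $H^1(\Phi;\coker(I-\cals))$.
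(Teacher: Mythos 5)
Your route through the Lyndon--Hochschild--Serre spectral sequence of $1\to\bbz\to\bbzphi\to\Phi\to 1$ is genuinely different from the paper's argument, which uses no spectral sequence at all: the paper pushes the extension out to $\bbr^2\rtimes\bbzphi$, normalizes the lift of $\bft_3$ to $(\bzero,\bft_3)$, reads off a map $\eta:\Phi\to\coker(I-\cals)$ directly from the translational parts of the lifts of $\alphabar,\betabar$, verifies by hand that $\eta$ is a crossed homomorphism for the action $\varphi(\alphabar)$, and checks that splittings correspond to coboundaries. Your collapse argument is correct and efficient: $\ker(I-\cals)=0$ kills the bottom row, $H^{\geq 2}(\bbz;\bbz^2)=0$ kills everything above $q=1$, so $H^2_{\varphi}(\bbzphi;\bbz^2)\cong E_2^{1,1}$; it also explains for free why every extension $Q$ automatically contains $\gamsig$ (its restriction to $\angles{\bft_3}$ lies in $H^2(\bbz;\bbz^2)=0$). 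What the abstract isomorphism does not deliver, and what the paper's hands-on proof exists to provide, is the explicit dictionary between an extension and the translational parts $(\bfa,\bfb)$ of its generators; that dictionary is what feeds the concrete congruence conditions into Theorem \ref{ClassificationSolof-geometry}.

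The step you flag but do not carry out --- pinning down the $\Phi$-module structure on $H^1(\bbz;\bbz^2)$ --- is a genuine gap, and it is exactly where a sign error is waiting. On the natural model $H^1(\bbz;\bbz^2)=\bbz^2/(I-\cals)\bbz^2$, the element $\alphabar$ sends a cocycle $f$ to $\varphi(\alphabar)f(\bft_3^{\abar})$, which for $\abar=-1$ equals $-\varphi(\alphabar)\cals\inv f(\bft_3)\equiv-\varphi(\alphabar)f(\bft_3)$; this is \emph{not} the action $\varphi(\alphabar)$ used in Lemma \ref{cocycleConditions} and in the lists following the theorem (where, e.g., $(I+\varphi(\alphabar))\bfa\equiv\bzero$ yields $a_1+a_2\equiv 0$ in case \gr{3}; the naive quotient action would instead give $a_1\equiv a_2$). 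The two descriptions are reconciled only because the isomorphism $(I-\cals)\inv:\bbz^2/(I-\cals)\bbz^2\to(I-\cals)\inv\bbz^2/\bbz^2$ is not equivariant for the ``same formula'' actions: when $\varphi(\alphabar)\cals\varphi(\alphabar)\inv=\cals\inv$ one has $(I-\cals)\varphi(\alphabar)(I-\cals)\inv=-\varphi(\alphabar)\cals\inv$, so transporting along $(I-\cals)\inv$ exactly absorbs the sign and converts the spectral-sequence action into $\varphi(\alphabar)$ on the model $(I-\cals)\inv\bbz^2/\bbz^2$. Until you perform this check (and exhibit the edge map $H^2(\bbzphi;\bbz^2)\to E_2^{1,1}$ concretely enough to see that the class of $Q$ is represented by $\alphabar\mapsto\bfa$), your final sentence identifying $E_2^{1,1}$ with ``the group computable by Lemma \ref{cocycleConditions}'' is an assertion rather than a proof, and the version of the theorem actually needed downstream is not yet established.
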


\begin{proof}
Since $\det(I-\cals)\not=0$,  $H^1(\Phi;\coker(I-\cals))$ is finite, as $\coker(I-\cals)$
is finite.	First, we verify that $\varphi(\bbzphi)\subset \on{GL}(2,\bbz)=\aut(\bbz^2)$
leaves the group  $(I-\cals)\inv\bbz^2\subset \bbr^2$
containing $\bbz^2$ invariant.
Suppose there exists $\bfa\in\bbr^2$ such that $(I-\cals)\bfa=\bfz\in\bbz^2$. Then,
\begin{align*}
(I-\cals)\big(\varphi(\bft_{3})\bfa\big) = (I-\cals)\big(\cals\bfa\big) 
= \cals\big((I-\cals)(\bfa)\big) = \cals(\bfz)\in\bbz^2.
\end{align*}
Now for $\varphi(\alphabar)$, if $\abar=+1$,
\begin{align*}
(I-\cals)\big(\varphi(\bar \alpha)\bfa\big) =
\varphi(\bar \alpha)(I-\cals)\bfa = \varphi(\bar \alpha)\bfz\in\bbz^2;
\end{align*}
and if $\abar=-1$,  then $\varphi(\alpha)$ conjugates $\cals$ to $\cals \inv$, and so,
\begin{align*}
(I-\cals)\big(\varphi(\bar \alpha)\bfa\big)
=\varphi(\bar \alpha)(-\cals^{-1})(I-\cals)\bfa = \varphi(\bar \alpha)(-\cals^{-1})\bfz\in\bbz^2.
\end{align*}
This shows that, if $\bfa\in (I-\cals)\inv\bbz^2$, then
so are $\varphi(\bft_{3})\bfa$ and $\varphi(\bar \alpha)\bfa$.
Consequently,  $(I-\cals)\inv\bbz^2$ is $\varphi(\bbzphi)$-invariant.
Since $\bfa-\varphi(\bft_{3})\bfa=(I -\cals)\bfa \in \bbz^2$,
$\bft_{3}$ acts as the identity 
on
$\coker(I-\cals)$. We obtain an induced action of
$\bbzphi/\angles{\bft_{3}} \cong \Phi$ on $\coker(I-\cals)$, and so
$H^1(\Phi;\coker(I-\cals))$ is defined.
	
Suppose we have a class in $H^2(\bbzphi; \bbz^2)$ defining an extension $Q$.
Since $\bbz^2\subset \bbr^2$ has the unique automorphism extension
property, there exists a push-out $\wt Q$
\cite[(5.3.4)]{seif-book}
fitting the commuting diagram
$$
\CD
1 @>>> \bbz^2 @>>> Q      @>>> \bbzphi @>>> 1\\
@. @VVV @VVV @| @.\\
1 @>>> \bbr^2 @>>> \wt Q      @>>> \bbzphi @>>> 1.\\
\endCD
$$
Note that $H^2(\bbzphi; \bbr^2)$ is annihilated by the (finite) index 
of $\bbz=\angles{\bft_{3}}$ in $\bbzphi$ \cite[Proposition 10.1]{Brown}.
Therefore, $H^2(\bbzphi; \bbr^2)$ vanishes, and $\wt Q$ is the split extension $\bbr^2 \rx \bbzphi$.
Since $\bbz\subset\bbzphi$ lifts back to $\gamsig$, it lifts back to
$\wt Q$
so that  $\wt Q$ contains $(\bzero,\bft_{3})\in \bbr^2\rx\bbzphi$.
For each element $\bft_{3}^n \alphabar\in\bbzphi$, pick a preimage
$\alpha= (a, \bft_{3}^n \alphabar)\in \bbr^2\rx\bbzphi$, taking care that
$a=\bzero$ if $\alphabar=\id$.
Then $\bft_{3}^n \alphabar\mapsto a$ defines a map
$\eta: \bbzphi\ra \bbr^2/\bbz^2= T^2$, and in fact, $\eta$ maps into
$\coker(I-\cals) \subset T^2$. Thus we have
\[
\eta: \Phi\ra \coker(I-\cals).
\]
We claim that $\eta$ is a crossed homomorphism.
Let $\alphabar, \betabar\in\Phi$, and
$\eta(\alphabar)=\bfa,\ \eta(\betabar)=\bfb$. For preimages $(\bfa,\bft_{3}^m\alphabar)$ and
$(\bfb,\bft_{3}^n\betabar)$ in $\wt Q$,
\begin{align*}
(\bfa,\bft_{3}^m\alphabar)(\bfb,\bft_{3}^n\betabar)
&=(\bfa+\varphi(\bft_{3}^m\alphabar)(\bfb),\bft_{3}^m\alphabar \bft_{3}^n\betabar)\\
&=(\bfa+\varphi(\bft_{3}^m)(\varphi(\alphabar)(\bfb)),
\bft_{3}^m(\alphabar \bft_{3}^n\alphabar\inv)\alphabar\betabar).
\end{align*}
Since $\alphabar \bft_{3}^n\alphabar\inv=\bft_{3}^\ell$ for some $\ell\in\bbz$,
\begin{align*}
\eta(\alphabar \betabar)
&=\bfa+\varphi(\bft_{3}^m)(\varphi(\alphabar)(\bfb))\\
&=\eta(\alphabar)+\cals^m(\varphi(\alphabar)(\eta(\betabar)))\\
&=\eta(\alphabar)+\varphi(\alphabar)(\eta(\betabar)),
\end{align*}
where the last equality holds because
$\varphi(\alphabar)(\eta(\betabar))\in\coker(I-\cals)$, and
the action of $\cals$ on $\coker(I-\cals)$ is trivial
(if $\bfa\in\coker(I-\cals)$, then
$(I-\cals)\bfa\in\bbz^2$, and hence $\bfa=\cals\bfa$ modulo $\bbz^2$).
Thus $\eta$ is a crossed homomorphism. Conversely, such a crossed homomorphism $\eta$ clearly gives rise to an
extension $Q$. Thus, we obtain a surjective map
\[
Z^1(\Phi; \coker(I-S)) \ra H^2(\bbzphi; \bbz^2),
\]
which we claim is a homomorphism. To see this, given 
\[
\eta: \Phi \ra \coker(I-\cals),
\] we find a $2$-cocycle
$f:\bbzphi \x \bbzphi \ra \bbz^2$ representing the extension $Q$ corresponding to $\eta$. 
Fix a lift $\wt \eta: \Phi \ra (I-S)^{-1}(\bbz^2)$ (not a homomorphism in general) of $\eta$. 
Then we can write any element of $Q$ as 
\[
(\bfn +  \wt \eta(\alphabar), \bft_{3}^{m} \alphabar),
\]
where $\bfn \in \bbz^2$, $m \in \bbz$.
Now, for $(\bfn_1 + \wt \eta(\alphabar), \bft_{3}^{m_1} \alphabar) \text{ and }  (\bfn_2 + \wt \eta(\betabar), 
\bft_{3}^{m_2} \betabar) \in Q,$
\begin{align*}
&(\bfn_1 + \wt \eta(\alphabar), \bft_{3}^{m_1} \alphabar) (\bfn_2 + \wt \eta(\betabar), 
\bft_{3}^{m_2} \betabar)=\\ &(\bfn_1 + \cals^{m_1} \varphi(\alphabar)(\bfn_2) + \wt \eta(\alphabar) +  
\cals^{m_1} \varphi(\alphabar)( \wt \eta(\betabar)), \bft_{3}^{m_1} \alphabar \bft_{3}^{m_2} \betabar ).
\end{align*}
Therefore, $Q$ is represented by the $2$-cocycle $f:\bbzphi \x \bbzphi \ra \bbz^2$ defined by
\begin{align*}
f( \bft_{3}^{m_1} \alphabar, \bft_{3}^{m_2} \betabar  ) = \wt \eta(\alphabar) +  
\cals^{m_1} \varphi(\alphabar)( \wt \eta(\betabar))- \wt \eta(\alphabar \betabar).
\end{align*}
It is now clear that addition of crossed homomorphisms in $Z^1(\Phi; \coker(I-S))$
corresponds to addition of $2$-cocycles in $Z^2(\bbzphi; \bbz^2)$.
	
We shall prove that $Q$ splits if and only if the corresponding $\eta$
is a coboundary, i.e. $\eta\in B^1(\Phi;\coker(I-\cals))$. Note that this will imply that 
$Z^1(\Phi; \coker(I-S)) \ra H^2(\bbzphi; \bbz^2)$ induces an isomorphism
 \[
H^1(\Phi; \coker(I-S)) \cong H^2(\bbzphi; \bbz^2).
\]
A splitting $\bbzphi\ra Q$ induces a homomorphism
\[
s: \bbzphi\ra \wt Q.
\]
Suppose $s(\bft_{3})=(z,\bft_{3})$ with $z\in\bbz^2$.
Even in this case, our definition of $\eta$ shows that, we will pick
$(\bzero,\bft_{3})$ as our preimage of $\bft_{3}$
so that $\eta(\bft_{3})=\bzero$, and
$\eta(\alphabar)=\bfa$ if $s(\alphabar)=(\bfa,\alphabar)$ for
others.

Let
$y=-(I-\cals)\inv z$. Then
\begin{align*}
(y,I) (z,\bft_{3}) (-y,I)&=(y+z-\varphi(\bft_{3})(y), \bft_{3})=(z+(I-\cals)(y), \bft_{3})\\&=(\bzero, \bft_{3})
\end{align*}
and
\begin{align*}
(y,I) (\bfa,\alphabar) (-y,I)&=(y+\bfa-\varphi(\alphabar)y,\alphabar)=(\bfa+(I-\varphi(\alphabar))y,\alphabar)\\
&=(\bfv,\alphabar),  \mbox{ by setting } \bfa+(I-\varphi(\alphabar))y=\bfv.
\end{align*}
Now,
\begin{align*}
(\bfv,\alphabar) (\bzero,\bft_{3})(\bfv,\alphabar)\inv
&=(\bfv-(\alphabar\bft_{3} \alphabar\inv) \bfv,\alphabar\bft_{3} \alphabar\inv)
=(\bfv-\bft_{3}^{\abar} \bfv,\bft_{3}^{\abar})\\
&=((I-\cals^{\abar})\bfv,\bft_{3}^{\abar}).
\end{align*}
Since $\bbz$ is normal in $\bbzphi$, for $s$ to be a homomorphism,
we must have $(I-\cals^{\abar})\bfv=\bzero$. This happens if and only if
$\bfv=0$ since $(I-\cals^{\abar})$ is invertible, which holds if and only if
\[
\eta(\alphabar)=\bfa=(\varphi(\alphabar)-I)(-y)=(\delta y) (\alphabar),
\]
so that $\eta$ is a coboundary.
\end{proof}

An alternate argument for Theorem \ref{H1-cokernel-theorem} is provided by
the long exact sequence
\[ 
\cdots \ra H_{\varphi}^1(\bbzphi; \bbr^2) \ra H_{\varphi}^1(\bbzphi; T^2) \ra H_{\varphi}^2(\bbzphi; \bbz^2) 
\ra H_{\varphi}^2(\bbzphi; \bbr^2) \ra \cdots,  
 \]
induced by the short exact sequence of coefficients $0 \ra \bbz^2 \ra \bbr^2 \ra  T^2 \ra 0$.

Since both $ H_{\varphi}^1(\bbzphi; \bbr^2)$ and $ H_{\varphi}^2(\bbzphi; \bbr^2)$ vanish, we obtain an isomorphism
\[  
H_{\varphi}^1(\bbzphi; T^2) \cong H_{\varphi}^2(\bbzphi; \bbz^2).
\]

To establish that $H_{\varphi}^1(\bbzphi; T^2) \cong
H^1(\Phi;\coker(I-\cals))$, note that any class in $H_{\varphi}^1(\bbzphi;
T^2)$ is represented by a crossed homomorphism, mapping $\bft_3$ to the
identity of $T^2$, and such a crossed homomorphism $\wt \eta: \bbzphi \ra
T^2$ induces $\eta: \Phi \ra T^2$. The image of $\eta$ must lie in $(I-S)
\inv \bbz^2 / \bbz^2$, and so $\eta$ defines an element of
$H^1(\Phi;\coker(I-\cals))$.
It is straightforward to check that this is an isomorphism.

On the other hand, our proof of Theorem \ref{H1-cokernel-theorem}
establishes the precise one-one correspondence between
$H^1(\Phi;\coker(I-\cals))$ and the set of all equivalence classes of
extensions $Q$,
\[
1\lra \bbz^2\lra Q\lra \bbzphi\lra 1.
\]

\begin{remark}
\label{cocycleConditions}
For each subgroup $\Phi$ of $D_4$, we describe both
$Z^1(\Phi;\coker(I-\cals))$ and $B^1(\Phi;\coker(I-\cals))$, where the
action of $\Phi$ on $\coker(I-\cals)$ is induced from a $\varphi:\bbzphi
\ra \gltz$ in Theorem \ref{abstract-kernel}.  For $\Phi \cong \bbz_2 \x
\bbz_2$, we need to check that the commutator of $(\bfa, \alphabar)$ and
$(\bfb, \betabar)$ is in $\bbz^2$. For $\bbz_4$, there is no cocycle
condition to check (since $I+ \varphi(\alphabar) + \varphi(\alphabar)^2 +
\varphi(\alphabar) ^3 = 0$). Likewise for $\bbz_4 \rx \bbz_2$, there is no
cocycle condition for the order 4 element.
\end{remark}

\vskip4pt
\noindent
(1)
$\Phi=\bbz_2=\angles{\alphabar}$,
\begin{align*}
Z^1(\Phi;\coker(I-\cals))
&=\{\bfa\in\coker(I-\cals)\mid\ 
(I+\varphi(\alphabar))\bfa\equiv\bzero\}\\
B^1(\Phi;\coker(I-\cals))
&=\{(I-\varphi(\alphabar))\bfv\mid\ \bfv\in\coker(I-\cals)\}
\end{align*}
\vskip4pt
\noindent
(2)
$\Phi=\bbz_4=\angles{\alphabar}$,
\begin{align*}
Z^1(\Phi;\coker(I-\cals))
&=\{\bfa\in\coker(I-\cals)\}\\
B^1(\Phi;\coker(I-\cals))
&=\{(I-\varphi(\alphabar))\bfv\mid\ \bfv\in\coker(I-\cals)\}
\end{align*}
\vskip4pt
\noindent
(3)
$\Phi=\bbz_2\x\bbz_2=\angles{\alphabar,\betabar}$,
\begin{align*}
Z^1(\Phi;\coker(I-\cals))
&=\{(\bfa,\bfb) \mid \bfa,\bfb \in \coker(I-\cals),\\
&\phantom{AAA}(I+\varphi(\alphabar))\bfa\equiv (I+\varphi(\betabar))\bfb\equiv\bzero, \\
&\phantom{AAA}(I-\varphi(\alphabar))\bfb\equiv(I-\varphi(\betabar))\bfa\}\\
B^1(\Phi;\coker(I-\cals))
&=\{((I-\varphi(\alphabar))\bfv,(I-\varphi(\betabar))\bfv) \mid
\bfv\in\coker(I-\cals)\}
\end{align*}
\vskip4pt
\noindent
(4)
$\Phi=\bbz_4\rx\bbz_2=\angles{\alphabar,\betabar |\
\alphabar^2, \betabar^2, (\betabar\alphabar)^4}$,
\begin{align*}
Z^1(\Phi;\coker(I-\cals))
&=\{(\bfa,\bfb) \mid \bfa,\bfb \in \coker(I-\cals),\\ 
&\phantom{AAA}(I+\varphi(\alphabar))\bfa\equiv(I+\varphi(\betabar))\bfb\equiv\bzero\}\\
B^1(\Phi;\coker(I-\cals))
&=\{((I-\varphi(\alphabar))\bfv,(I-\varphi(\betabar))\bfv) \mid
\bfv\in\coker(I-\cals)\}
\end{align*}
\bigskip

Suppose we have
an extension $Q$; that is, $\eta\in H^1(\Phi;\coker(I-\cals))$
with $\eta(\alphabar)=\bfa=\xy{a_1}{a_2}$. Then
\[
Q=\angles{\bft_1,\bft_2,\bft_3,
\alpha=(\bft_1^{a_1} \bft_2^{a_2} \bft_3^{a_3},A)} \subset \Sol \rx D_4
\]
has the following presentation
\begin{align*}
\hspace{0.5in}
&\bft_3(\bft_1^{n_1}\bft_2^{n_2})\bft_3\inv=\bft_1^{m_1}\bft_2^{m_2},
\text{ where } {\xy{m_1}{m_2}=\cals\xy{n_1}{n_2}},\\
&\alpha(\bft_1^{n_1}\bft_2^{n_2})\alpha\inv=\bft_1^{m_1}\bft_2^{m_2},
\text{ where } {\xy{m'_1}{m'_2}=\varphi(\alphabar)\xy{n_1}{n_2}},\\
&\alpha\bft_3\alpha\inv=\bft_1^{w_1}\bft_2^{w_2}\bft_3^{\abar},
\text{ where } {\xy{w_1}{w_2}=
\left(I-\cals^{\abar}\right)\xy{a_1}{a_2}},\\
&\alpha^2=\bft_1^{v_1}\bft_2^{v_2}\bft_3^{(1+\abar)a_3},
\text{ where } {\xy{v_1}{v_2}=\left(I+\varphi(\alphabar)\right)\xy{a_1}{a_2}},
\text{ if }\ A^2=I,\\
&\alpha^4=\id,
\text{ if }\ \ord(A)=4.
\end{align*}

\begin{corollary}
\label{a-conj-zero3}
Let $Q = \angles{\Gamma_\cals,
(\bft_1^{a_1}\bft_2^{a_2}\bft_3^{a_3},A)}$ be a
$\Sol$-crystallographic group with standard lattice $\Gamma_\cals =
\angles{\bft_1,
\bft_2, \bft_3}$. Suppose $\varphi(\alphabar)=-K$ and $\cals=nK\pm I$.
Recall that by Theorem \ref{abstract-kernel},
$A$ has order 2, $\abar=1$, and $a_3=\frac 12$.
Then 
\[
H^1(\Phi;\coker(I-\cals))=0.
\]
In fact, there exists $\bft_1^{v_1}\bft_2^{v_2}$ which conjugates
$(\bft_1^{a_1}\bft_2^{a_2}\bft_3^{\frac12},A)$ to $(\bft_3^{\frac12},A)$
and leaves $\Gamma_\cals$ invariant.
\end{corollary}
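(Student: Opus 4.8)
The plan is to deduce the cohomology vanishing from the group structure of $\bbzphi$, and then produce the conjugating element explicitly by specializing the splitting argument in the proof of Theorem \ref{H1-cokernel-theorem}; the only real work is an integrality check at the very end.

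First I would observe that since $A$ has order $2$, $\abar=+1$, and $a_3=\tfrac12$, the generator satisfies $\bar\alpha^2=(\bft_3^{1/2}\cdot\bft_3^{1/2},A^2)=(\bft_3,I)=\bft_3$. Hence $\bbzphi=\angles{\bar\alpha}\cong\bbz$ is infinite cyclic, and in particular free. A free group has cohomological dimension $1$, so $H^2_\varphi(\bbzphi;\bbz^2)=0$ for any module structure, in particular for $\bbz^2$ acted on through $\varphi$. The isomorphism $H^2_\varphi(\bbzphi;\bbz^2)\cong H^1(\Phi;\coker(I-\cals))$ of Theorem \ref{H1-cokernel-theorem} then yields $H^1(\Phi;\coker(I-\cals))=0$ at once.

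For the conjugation I would compute directly in $\Sol\rx D_4$. Writing $\bfa=(a_1,a_2)$ and taking a translation $\gamma=\bft_1^{v_1}\bft_2^{v_2}$ with $\bfv=(v_1,v_2)\in\bbr^2$, conjugation fixes the $\bft_3^{1/2}$- and $A$-parts of $\alpha$ and changes its $\bbr^2$-translation part from $\bfa$ to $\bfa+(I+K)\bfv$. (Here one uses $\Delta^{1/2}AP=P\cals^{1/2}\wa=P\varphi(\bar\alpha)=-PK$, valid because $\Delta^{1/2}=P\cals^{1/2}P\inv$ and $\varphi(\bar\alpha)=\cals^{1/2}\wa=-K$.) Since $\det(I+K)=1+\tr(K)+\det(K)$ equals $n$ when $\det K=-1$ and $n+2$ when $\det K=+1$, the matrix $I+K$ is invertible in both cases, so setting $\bfv=-(I+K)\inv\bfa$ gives $\gamma\alpha\gamma\inv=(\bft_3^{1/2},A)$.

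The step I expect to be the crux is checking that this $\gamma$ leaves $\Gamma_\cals$ invariant, which is precisely the requirement $\bfv\in(I-\cals)\inv\bbz^2$ (conjugation by $\gamma$ fixes $\bft_1,\bft_2$ and sends $\bft_3$ to $\bft_1^{w_1}\bft_2^{w_2}\bft_3$ with $(w_1,w_2)=(I-\cals)\bfv$). This is where the hypothesis $\cals=nK\pm I$ is essential. Using $\on{adj}(I+K)=\det(I+K)\,(I+K)\inv$, I would rewrite $(I-\cals)(I+K)\inv$ as an integral matrix: if $\cals=nK+I$ then $I-\cals=-nK$ and $(I-\cals)(I+K)\inv=-K\,\on{adj}(I+K)$; if $\cals=nK-I$ then $I-\cals=2I-nK=(n+2)I-n(I+K)$ and $(I-\cals)(I+K)\inv=\on{adj}(I+K)-nI$. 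In either case $(I-\cals)(I+K)\inv$ has integer entries, so $(I-\cals)\bfv=-(I-\cals)(I+K)\inv\bfa\in\bbz^2$; thus $\bfv\in(I-\cals)\inv\bbz^2$, the conjugation preserves $\Gamma_\cals$, and $\alpha$ is carried to $(\bft_3^{1/2},A)$.
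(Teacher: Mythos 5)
Your reduction of the cohomology vanishing to $H^2_\varphi(\bbzphi;\bbz^2)=0$ is correct and is a genuinely different (and slicker) route than the paper's: since $a_3=\tfrac12$ and $\abar=+1$ force $\bar\alpha^2=\bft_3$, the group $\bbzphi$ is infinite cyclic, hence of cohomological dimension one, and the isomorphism of Theorem \ref{H1-cokernel-theorem} turns this into $H^1(\Phi;\coker(I-\cals))=0$. The paper instead argues concretely that $I-\varphi(\bar\alpha)=I+K$ is nonsingular and exhibits every cocycle as an explicit coboundary; your version buys brevity at the price of not producing the conjugating element, which you then must construct separately.

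The gap is in your final step. From the integrality of the matrix $(I-\cals)(I+K)^{-1}$ you conclude $(I-\cals)\bfv=-(I-\cals)(I+K)^{-1}\bfa\in\bbz^2$, but that inference requires $\bfa\in\bbz^2$, whereas $\bfa=(a_1,a_2)$ is only known to lie in $(I-\cals)^{-1}\bbz^2$ (that is what $\alpha\bft_3\alpha^{-1}\in\Gamma_\cals$ gives). The corollary is vacuous when $\bfa$ is integral, so the case you must handle is exactly the one your argument does not cover. The conclusion is still true, but for a different reason: by Cayley--Hamilton one checks that in both cases $\cals=nK\pm I$ your integral matrix is in fact $(I-\cals)(I+K)^{-1}=I-K=I+\varphi(\bar\alpha)$, and $(I+\varphi(\bar\alpha))\bfa\in\bbz^2$ is precisely the cocycle condition of Lemma \ref{cocycleConditions} forced by $\alpha^2=\bft_1^{v_1}\bft_2^{v_2}\bft_3\in\Gamma_\cals$. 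This is how the paper closes the argument: since $\varphi(\bar\alpha)$ is a square root of $\cals$, one has $(I-\cals)\bfv=(I+\varphi(\bar\alpha))(I-\varphi(\bar\alpha))\bfv=\pm(I+\varphi(\bar\alpha))\bfa\in\bbz^2$. So you must invoke the constraint coming from $\alpha^2$, not only the one coming from $\alpha\bft_3\alpha^{-1}$; without it the integrality claim is unjustified.
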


\begin{proof}
We have $\det(I - \varphi(\alphabar)) = \det(I + K) = 1 + \det(K) +\tr(K)$. By Theorem \ref{abstract-kernel}, 
when $\det(K)=-1$, $\tr(K)>0$; and when $\det(K)=1$,  $\tr(K) > 2$.

Consequently, $I - \varphi(\alphabar)$ is always non-singular and we may take
$\bfv=(I-\varphi(\alphabar))\inv\bfa$.  Then $(\bft_1^{v_1} \bft_2^{v_2},I) \in \Sol \rx D_4$ conjugates
$(\bft_3^{\frac 1 2}, A)$ to $(\bft_1^{a_1} \bft_2^{a_2}\bft_3^{\frac 1 2},
A)$.
It remains to show $\bfv\in (I-\cals)\inv\bbz^2$. This condition guarantees
conjugation by $(\bft_1^{v_1} \bft_2^{v_2},I)$ leaves $\gamsig$  invariant.
Since $\varphi(\alphabar)=-K$ is a square root of $\cals$ and $\bfv=(I-\varphi(\alphabar))\inv\bfa,$
\[
(I-\cals) \bfv = (I+\varphi(\alphabar))(I-\varphi(\alphabar)) \bfv =
(I+\varphi(\alphabar))\bfa \in \bbz^2,
\]
where the last inclusion holds by the cocycle conditions in Remark \ref{cocycleConditions}. 
Therefore $\Phi \ni A \mapsto \bfa \in \coker(I-\cals)$ is a coboundary, 
and $H^1(\Phi;\coker(I-\cals))$ vanishes.
\qedhere
\end{proof}

Corollary \ref{a-conj-zero3} greatly simplifies the computation of
$H^1(\Phi; \coker(I-\cals))$. For example, in cases \gr{1}, \gr{2b}, and
\gr{5} of Theorem \ref{abstract-kernel}, we can take $\bfa=\bfzero$,
whereas in cases \gr{6b}, \gr{6b\ii}, \gr{7}, and \gr{7\ii}, we can take
$\bfb=\bfzero$.

The complete list of crystallographic groups for $\Sol$ will follow from
our classification of crystallographic groups of $\Solof$. However, we will
need to analyze how a type \gr{3\ii} or \gr{6\ii} crystallographic group of
$\Sol$ acts on $\Sol$. This will be critical to determining when a
crystallographic group of $\Solof$ has torsion.

\begin{lemma}
\label{geometricLemma}
Let $Q$ be a crystallographic group of\ $\Sol$ of type \gr{3\ii} or \gr{6b\ii}.

When $Q$ is of type \gr{3\ii}, 
\[
Q=\left\langle \gamsig,
\alpha=\left(\bft_1^{a_1} \bft_2^{a_2} \bft_3^{0}, 
\matNpp\right)\right\rangle, \mbox { and}
\] 
$Q \backslash \Sol$ can be described as $T^2 \x I$ with
$T^2 \x \{0 \}$ identified to itself by the affine involution $\left( \xy{a_1} {a_2}, \matPpm \right)$, 
and $T^2 \x \{1 \}$ identified to itself by the affine involution 
$\left( \xy{a_1} {a_2}, \left[ \begin{matrix} \sigma_{11} & -\sigma_{12}\\
\sigma_{21} & -\sigma_{11} \end{matrix} \right] \right)$. Here $T^2$ is the 2-dimensional torus.

If $\left[\begin{matrix} -1 & 0 \\ 0 & 1\end{matrix} \right]$ is used instead of $\left[\begin{matrix} 1 & 0 \\ 0 & -1\end{matrix} \right]$,
then  $Q \backslash \Sol$ can be described as $T^2 \x I$ with
$T^2 \x \{0 \}$ identified to itself by the affine involution $\left( \xy{a_1} {a_2},
\left[\begin{matrix} -1 & 0 \\ 0 & 1\end{matrix} \right] \right)$, and
 $T^2 \x \{1 \}$ identified to itself by the affine involution
$\left( \xy{a_1} {a_2}, \left[ \begin{matrix} -\sigma_{11} & \sigma_{12}\\
-\sigma_{21} & \sigma_{11} \end{matrix} \right] \right)$.

When $Q$ is of type \gr{6b\ii},
 \[
Q=\left\langle \gamsig,
\alpha=\left(\bft_1^{a_1} \bft_2^{a_2} \bft_3^{0}, \matNpp\right),\beta=\left(\bft_3^{\frac12}, \matPmm\right)\right\rangle
, \mbox{ and}
\] 
$Q \backslash \Sol$ can be described as $T^2 \x I$ with
$T^2 \x \{0 \}$ identified to itself by the affine involution $\left( \xy{a_1} {a_2}, \matPpm \right)$, and $T^2 \x \{1 \}$
 identified to itself by the affine involution 
$\left( \xy{a_1} {a_2}, \left[ \begin{matrix} -k_{11} & k_{12}\\
-k_{21} & k_{11} \end{matrix} \right] \right)$.
\end{lemma}
\begin{proof}
The action of
$\Gamma_\cals$ on $\Sol$ is equivalent to the action of $\bbz^2 \rx_{\cals}
\bbz$ on $\bbr^2 \rx_\cals \bbr$.  A fundamental domain for this action is
given by the unit cube $I^3$, and evidently $Q \backslash \Sol$ is
given by $T^2 \x I$ with $T^2 \x \{0\}$ identified to $T^2 \x \{1\}$ via
$\cals$, which we view as a self-diffeomorphism of $T^2$.  Note
that $\bbr^2 \ra \bbr^2 \rx_\cals \bbr \ra \bbr$ induces the fiber bundle
with infinite cyclic structure group generated by $\cals$:
\[
T^2 \ra {\Gamma_\cals} \backslash \Sol \ra S^1.
\]

Now suppose $Q$ is of type \gr{3\ii}. Then $Q\bs\Sol$ is the quotient of
${\Gamma_\cals} \backslash \Sol$ by the involution defined by $\alpha =
\left(\bft_1^{a_1} \bft_2^{a_2}, \matNpp\right)$. Here $\alpha$ acts as a
reflection on the base $S^1$. A fundamental domain for this action is given
by $T^2 \x \left[0, \frac 1 2 \right]$. Now $\alpha$ identifies $T^2 \x
\{0\}$ to itself and $T^2 \x \{\frac 1 2\}$ to itself.

Indeed, $(\bft_1^{a_1}\bft_2^{a_2},A) \cdot \bft_1^{x_1} \bft_2^{x_2} =
\bft_1^{a_1}\bft_2^{a_2}A(\bft_1^{x_1} \bft_2^{x_2})$ shows that $\alpha$
acts on $T^2 \x \{0\}$ as the affine transformation
$(\bfa,\varphi(\alphabar))$.
For $\bft_1^{x_1} \bft_2^{x_2} \bft_3^{\frac 1 2} \in T^2 \x \{\frac 1 2\}$,
\begin{align*}
\bft_3 (\bft_1^{a_1}\bft_2^{a_2},A) \cdot \bft_1^{x_1} \bft_2^{x_2}\bft_3^{\frac 1 2}& 
=\bft_3 \bft_1^{a_1}\bft_2^{a_2} A(\bft_1^{x_1} \bft_2^{x_2}) A(\bft_3^{\frac 1 2})
=\bft_3 \bft_1^{a_1}\bft_2^{a_2} A(\bft_1^{x_1} \bft_2^{x_2}) \bft_3^{-\frac 1 2}\\
&=\left( \bft_3 \bft_1^{a_1}\bft_2^{a_2} \bft_3^{-1} \right)
\left( \bft_3 A(\bft_1^{x_1} \bft_2^{x_2}) \bft_3^{-1} \right) \bft_3^{\frac 1 2} \in T^2 \x \{\tfrac 1 2\}.
\end{align*}
Since conjugation by $\bft_3$ is the action of $\cals$, we see that
$\alpha$ acts on $T^2$ as the affine transformation $(\cals \bfa, \cals
\varphi(\alphabar))$. But since $\bfa \in \coker(I-\cals)$, this simplifies
to $(\bfa, \cals \varphi(\alphabar))$.  Note that the condition that
$\sigma_{11}=\sigma_{22}$ ensures that $\cals \varphi(\alphabar)$ has order
2.

The argument in case \gr{6b\ii} is nearly identical. In this case, note that
$Q$ contains a group of type \gr{2b}, say $Q'$, as an index 2 subgroup,
\[
Q'=\left\langle \gamsig,
\beta=\left(\bft_3^{\frac12}, \matPmm\right)\right\rangle.
\] 

Therefore, $Q \backslash \Sol$ is the quotient of $Q' \backslash \Sol$ by
$\alpha = \left(\bft_1^{a_1} \bft_2^{a_2}, \matNpp\right)$. Now $Q'
\backslash \Sol$ is the quotient of ${\Gamma_\cals} \backslash \Sol$ by the
involution defined by $\beta$. On the base of $T^2 \ra {\Gamma_\cals}
\backslash \Sol \ra S^1$, $\beta$ acts as a translation. Thus a fundamental
domain for the action of $\beta$ is given by $T^2 \x \left[0, \frac 1 2
\right]$. Note that $\beta$ identifies $T^2 \x \{0\}$ with $T^2 \x
\{\frac 1 2\}$ via $\varphi(\betabar) = -K$, which is a square root of
$\cals$, and $Q' \backslash \Sol$ is the mapping torus of
$\varphi(\betabar)$. Now because $Q' \backslash \Sol$ admits the structure
of a $T^2$ bundle over $S^1$, the construction in \gr{3\ii} applies. A
fundamental domain for the action of $\alpha$ on $Q' \backslash \Sol$ is
given by $T^2 \x \{\frac 1 4\}$. As in case \gr{3\ii}, $\alpha$ acts on
$T^2 \x \{0\}$ affinely as $(\bfa,\varphi(\alphabar))$. For $\bft_1^{x_1}
\bft_2^{x_2} \bft_3^{\frac 1 4} \in T^2 \x \{\frac 1 4\}$,

\begin{align*}
(\bft_3^{\frac12}, B) (\bft_1^{a_1}\bft_2^{a_2},A) \cdot &\bft_1^{x_1} \bft_2^{x_2}\bft_3^{\frac 1 4} =
\bft_3^{\frac12} B(\bft_1^{a_1}\bft_2^{a_2}) BA(\bft_1^{x_1} \bft_2^{x_2}) BA(\bft_3^{\frac 1 4})\\
&=\bft_3^{\frac12} B(\bft_1^{a_1}\bft_2^{a_2})
\bft_3^{-\frac12} \bft_3^{\frac12} BA(\bft_1^{x_1} \bft_2^{x_2}) \bft_3^{-\frac 1 4}\\
&=\left(\bft_3^{\frac12} B(\bft_1^{a_1}\bft_2^{a_2})
\bft_3^{-\frac12} \right) \left( \bft_3^{\frac12} BA(\bft_1^{x_1} \bft_2^{x_2})  \bft_3^{-\frac 1 2} \right) \bft_3^{\frac 1 4}
\in T^2 \x \{\tfrac 1 4\}.
\end{align*}

Now conjugation by $(\bft_3^{\frac 12}, B)$ is the action of
$\varphi(\betabar)=-K$ on $T^2$. Hence $\alpha$ acts affinely on $T^2 \x
\{\frac 1 4\}$ as $(\varphi(\betabar)\bfa, \varphi(\betabar)
\varphi(\alphabar))$.  The commutator cocycle conditions for $\Phi=\bbz_2
\times \bbz_2$ in Remark \ref{cocycleConditions}, with $\bfb=\bfzero$
implies $(I-\varphi(\betabar))\bfa = (I+K)\bfa \in \bbz^2$, so this
simplifies to $(\bfa, \varphi(\betabar) \varphi(\alphabar))=(\bfa, (-K)
\varphi(\alphabar))$.
\qedhere
\end{proof}

\section
{\texorpdfstring
{Lattices of $\Solof$}
{Lattices of Solof}}

In this section we classify the lattices of $\Solof$.
Given a lattice $\tgamsig$ of $\Solof$, $\wt\Gamma \cap \calz(\Solof) \cong \bbz$ is a lattice of $\calz(\Solof) \cong \bbr$, and
the projection map, 
\[
G \ra G / \calz(G) \cong \Sol,
\] 
carries $\tgamsig$ to a lattice of $\Sol$, 
isomorphic to $\Gamma_{\cals}$, for some $\cals\in\sltz$ with $\rm{trace}(\cals)>2$. Thus, $\tgamsig$ is the central extension
\[ 
1 \lra \bbz \lra \tgamsig \lra \Gamma_{\cals} \lra 1.
\]
As is well known, such central extensions of $\bbz$ by $\gamsig$ are
classified by the second cohomology group $H^2(\gamsig;\bbz)$.

\begin{theorem}
\label{lattice-part-construction}
Let $\cals\in\sltz$ with $\rm{trace}(\cals)>2$.
There is a one-one correspondence between the equivalence classes of all central extensions
\[
1\lra \bbz\lra \wt\Gamma\ \lra\gamsig\lra 1
\]
and the group $\bbz \oplus \text{Coker}(\cals-I)$.  Note $\coker(\cals-I)$ is finite.
\end{theorem}

\begin{proof}
Recall $\gamsig=\bbz^2 \rx_\cals \bbz$. Then
\begin{align*}
H^2(\bbz^2\rx_{\cals}\bbz;\bbz)
&=\text{Free} \left(H_2(\bbz^2\rx_{\cals}\bbz;\bbz)\right)\oplus
\text{Torsion} \left(H_1(\bbz^2\rx_{\cals}\bbz;\bbz)\right)\\
&=\bbz \oplus (\bbz^2/(\cals-I)\bbz^2)
=\bbz \oplus \text{Coker}(\cals-I).
\end{align*}
\qedhere
\end{proof}

For $\{q,(m_1,m_2)\}\in \bbz \oplus \text{Coker}(\cals-I)$,
denote the corresponding extension $\wt\Gamma$ by $\lgamsig$
whose presentation is given in Lemma 	\ref{determine-TT}.
We show that $\lgamsig$ with $q \neq 0$ embeds as a lattice in $\Solof$
(when $q=0$, $\lgamsig$ embeds into $\Sol \x \bbr$).
An $\cals\in\sltz$ with $\tr(\cals)>2$ produces $P$ and $\Delta$,
where $P\in \sltr$ diagonalizes $\cals$, $P \cals P \inv
=\left[
\begin{matrix} \tfrac{1}{\lambda}&0\\
0&\lambda \end{matrix}
\right],
$ $\tfrac{1}{\lambda}<1<\lambda$. We had the embedding of $\bbz^2\rx_\cals\bbz$ into $\Sol$ in (\ref{def-phi}):
\begin{align*}
\notag \left(\xy{x}{y},u\right)
\longmapsto
\notag \left(P\xy{x}{y},u\ln(\lambda)\right).
\end{align*}
The quotient of $\Solof$ by its center is isomorphic to $\Sol$ by the projection
$$
\left[
\begin{matrix} 1 &e^u x & z\\
0 &e^u &y\\
0 &0 &1 \end{matrix}
\right]
\longmapsto
\left(\left[\begin{matrix} x\\ y \end{matrix}\right],u\right).
$$
Under this projection, we will find all lattices of $\Solof$ projecting to $\gamsig$. Let
\begin{align}
\label{embedSolofLattice}
\begin{split}
\bfe_1&=\left(\left[\begin{matrix} 1\\ 0 \end{matrix} \right],0\right)
\longmapsto
(P\bfe_1,0)\longmapsto
\bft_1=
\left[\begin{matrix}
1 & p_{11} &c_1\\
0 & 1 &p_{21}\\
0 & 0 &1 \end{matrix}
\right]
,\\
\bfe_2&=\left(\left[\begin{matrix} 0\\ 1 \end{matrix} \right],0\right)
\longmapsto
(P\bfe_2,0)\longmapsto
\bft_2=
\left[\begin{matrix}
1 & p_{12} &c_2\\
0 & 1 &p_{22}\\
0 & 0 &1 \end{matrix}
\right],\\
\bfe_3&
=\left(\left[\begin{matrix} 0\\ 0 \end{matrix} \right],1\right)
\longmapsto
(0,\ln(\lambda))\longmapsto
\bft_3=
\left[\begin{matrix}
1 & 0 &c_3\\
0 & {\lambda} &0\\
0 & 0 &1 \end{matrix}
\right],\\
&\kern6pt\phantom{=\left(\left[\begin{matrix} 0\\
1 \end{matrix} \right],0\right)
\longmapsto}
\bft_4=
\left[\begin{matrix}
1 & 0&1\\
0 & 1 &0\\
0 & 0 &1 \end{matrix}
\right].
\end{split}
\end{align}
where $c_i$'s are to be determined. Then $[\bft_1,\bft_2]=\bft_4$
(regardless of the $c_i$'s).

\begin{lemma}
\label{determine-TT}
For any integers $q, m_1,m_2$,
there exist unique $c_1, c_2$ for which $\{\bft_1,\bft_2,\bft_3,\bft^{\frac1q}_4\}$
forms a group $\lgamsig$ with the presentation
\begin{align*}
\lgamsig=\langle
\bft_1,\bft_2,\bft_3,\bft_4^{\frac 1q}\ |\
[\bft_1,\bft_2]&=\bft_4,\ \text{$\bft_4$ is central,}\\
\bft_3 \bft_1 \bft_3\inv
&= \bft_1^{\sigma_{11}} \bft_2^{\sigma_{21}} \bft_4^{\frac{m_1}{q}},\\
\bft_3 \bft_2 \bft_3\inv&= \bft_1^{\sigma_{12}} \bft_2^{\sigma_{22}}
\bft_4^{\frac{m_2}{q}}\rangle.
\end{align*}
	
Consequently, $\lgamsig$ is solvable and contains $\Gamma_q =\angles{\bft_1, \bft_2, \bft^{\frac 1q}_4}$ 
as its discrete nil-radical, where $\Gamma_q$ is a lattice of $\nil$.
\end{lemma}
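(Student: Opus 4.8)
The plan is to turn the two conjugation relations into a $2\times 2$ linear system in $c_1,c_2$ and to solve it using the hypothesis $\tr(\cals)>2$. Throughout I write $\pi\colon\Solof\ra\Sol$ for the projection in diagram (\ref{solof-to-sol}); its kernel is the center $\calz=\{s(0,0,z,0)\}$, and under the identification $\Solof/\calz\cong\Sol$ the map $\pi$ simply forgets the $(1,3)$-entry. By construction $\pi$ sends $\bft_1,\bft_2,\bft_3$ to the generators of $\gamsig$ and kills $\bft_4\in\calz$, so the two relations we must verify project to the defining relations of $\gamsig$ in (\ref{3Lattice-presentation}) (transported to $\Sol$ through $\phi$), which already hold because $P\cals P\inv=\Delta$. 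Consequently, for $i=1,2$ the two elements $\bft_3\bft_i\bft_3\inv$ and $\bft_1^{\sigma_{1i}}\bft_2^{\sigma_{2i}}$ have the \emph{same} image under $\pi$, hence differ by a unique central factor $\bft_4^{w_i}$; the required relation holds exactly when $w_i=m_i/q$.

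First I would dispose of the two trivial relations. Since $\bft_4=s(0,0,1,0)$ lies in $\calz(\Solof)$ it is automatically central. A direct Heisenberg commutator computation gives $[\bft_1,\bft_2]=s(0,0,\det P,0)$, and because $\det P=1$ (arranged in Proposition \ref{lattice-a=0}) this equals $\bft_4$, independently of the $c_i$. It then remains to compute $w_i$. Multiplying the $3\times 3$ matrices shows that $\bft_3\bft_i\bft_3\inv$ has $(2,2)$-entry $1$, the \emph{same} $(1,2)$- and $(2,3)$-entries as $\bft_1^{\sigma_{1i}}\bft_2^{\sigma_{2i}}$ (this is just the equality $\Delta P\bfe_i=P\cals\bfe_i$), and $(1,3)$-entry equal to $c_i$; the $c_3$ contributed by $\bft_3$ and $\bft_3\inv$ cancels. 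Using the power rule (a Heisenberg element $[a,b,1,c]$ has $n$-th power $[na,nb,1,\,nc+\binom n2 ab]$) together with the product rule for the central slot, the $(1,3)$-entry of $\bft_1^{\sigma_{1i}}\bft_2^{\sigma_{2i}}$ works out to $\sigma_{1i}c_1+\sigma_{2i}c_2+\kappa_i$, where $\kappa_i$ is a constant depending only on the fixed data $p_{k\ell}$.

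Equating the central entries (i.e.\ imposing $w_i=m_i/q$) then gives, after moving $c_i$ to the left,
\[
\left[\begin{matrix} 1-\sigma_{11} & -\sigma_{21}\\ -\sigma_{12} & 1-\sigma_{22}\end{matrix}\right]
\left[\begin{matrix} c_1\\ c_2\end{matrix}\right]
=
\left[\begin{matrix} \kappa_1+\tfrac{m_1}{q}\\ \kappa_2+\tfrac{m_2}{q}\end{matrix}\right].
\]
The coefficient matrix is $(I-\cals)^{\mathsf T}$, so its determinant is $\det(I-\cals)=1-\tr(\cals)+\det(\cals)=2-\tr(\cals)$. The hypothesis $\tr(\cals)>2$ makes this nonzero, the system is nonsingular, and there is exactly one solution $(c_1,c_2)$ — precisely the asserted existence and uniqueness. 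Note $c_3$ is left completely free by these relations.

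The genuine content, and the only place that needs care, is the bookkeeping in the middle paragraph: verifying that the non-central entries of the two sides of each relation coincide \emph{automatically}, so that the relations collapse to a $2\times 2$ linear system. I would present this either via the projection $\pi$ (as above) or by a direct matrix multiplication. Once it is in place, the conclusion is immediate from the single observation that $(I-\cals)^{\mathsf T}$ is nonsingular because $\det(I-\cals)=2-\tr(\cals)\ne 0$, which also links the parameter count to the $\coker(\cals-I)$ appearing in Theorem \ref{lattice-part-construction}.
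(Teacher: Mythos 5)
Your proposal is correct and follows essentially the same route as the paper: both reduce the two conjugation relations to a linear system in $c_1,c_2$ whose coefficient matrix is $(I-\cals)^{\mathsf T}$, and both conclude existence and uniqueness from $\det(I-\cals)=2-\tr(\cals)\neq 0$. The extra bookkeeping you supply (the projection to $\Sol$ matching the non-central entries, and $[\bft_1,\bft_2]=\bft_4$ from $\det P=1$) is exactly what the paper leaves implicit with ``we only need to verify the last two equalities.''
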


\begin{proof}
We only need to verify the last two equalities. But they
become a system of  equations on $c_i$'s
\begin{align}
\label{c1c2-equation}
\begin{split}
(1-\sigma_{11})c_1 -\sigma_{21} c_2  &={\frac{m_1}{q}}
-\frac{\sigma_{21} (\sigma_{12}+1 - \sigma_{11} + \sigma_{11} \sqrt{T^2-4})}{2 \sqrt{T^2-4}},\\
-\sigma_{12} c_1+ (1-\sigma_{22})c_2 &={\frac{m_2}{q}}
+\frac{\sigma_{12} (\sigma_{21} +1 - \sigma_{22} - \sigma_{22} \sqrt{T^2-4})}{2 \sqrt{T^2-4}},
\end{split}
\end{align}
where $T=\sigma_{11}+\sigma_{22}$.
Since $I-\cals$ is non-singular, there exists a unique solution for
$c_1, c_2$.
\end{proof}

Equation (\ref{c1c2-equation}) also shows the cohomology
classification.
Suppose $\{c_1,c_2\}$ and $\{c'_1,c'_2\}$ are solutions for the equations
with $\{m_1,m_2\}$ and $\{m'_1,m'_2\}$, respectively.
Then $(c'_1-c_1,c'_2-c_2)\in\big(\frac1q\bbz\big)^2$ if and only if
$(m'_1-m_1,m'_2-m_2)\in \text{Coker}(\cals^T-I) \cong \coker(\cals-I)$.
This happens if and only if
$\lgamsig=\wt\Gamma_{(\cals;q,m'_1,m'_2)}$.

\begin{remark}
(1) Note that any lattice $\lgamsig$ of $\Solof$ projects to the standard lattice
$\gamsig$ of $\Sol$.

(2) In Lemma \ref{determine-TT}, the $c_i$'s are independent of choice of $P$ because
equation (\ref{c1c2-equation}) has coefficients only from the matrix $\cals$.
	
(3) Notice that $c_3$ does not show up in the presentation of the lattice
$\lgamsig$, so $c_3$ can be changed without affecting the
isomorphism type of the lattice.
\end{remark}

\begin{notation}[Standard lattice]
\label{standard-lattice}
The lattice generated by
\[
\bft_1=
\left[\begin{matrix}
1 & p_{11} &c_1\\
0 & 1 &p_{21}\\
0 & 0 &1 \end{matrix}
\right]
,\\
\bft_2=
\left[\begin{matrix}
1 & p_{12} &c_2\\
0 & 1 &p_{22}\\
0 & 0 &1 \end{matrix}
\right],\\
\bft_3=
\left[\begin{matrix}
1 & 0 &c_3\\
0 & {\lambda} &0\\
0 & 0 &1 \end{matrix}
\right],\\
\bft_4^{\frac 1q}=
\left[\begin{matrix}
1 & 0&\frac 1q\\
0 & 1 &0\\
0 & 0 &1 \end{matrix}
\right]
\]
{\bf with $c_3= 0$},
is called a \emph{standard lattice} of $\Solof$.
\end{notation}
Therefore, any lattice of $\Solof$ is isomorphic to a standard lattice.
However, a non-standard lattice (i.e., $c_3\not=0$) will be needed when we consider 
finite extensions of $\wt \Gamma_\cals$, specifically, in the holonomy $\bbz_4$ case.

\noindent
The following  lemma on lattices of $\Solof$ will be needed in the next section.

\begin{lemma}
\label{rational}

Let $\lgamsig$ be a lattice of $\Solof$, embedded as in assignment (\ref{embedSolofLattice}).
\noindent
\rm{(a)} Let $r_1, r_2 \in \bbq$. Then
\[
\bft_1^{r_1} \bft_2^{r_2} = \bft_2^{r_2} \bft_1^{r_1} \bft_4^{r_1 r_2}.
\]

\noindent
\rm{(b)}
Let $a_1, a_2 \in \bbq$. Then, for $\abar=\pm 1$,
\begin{align}
\label{thev}
\bft_3^{\abar} \bft_1^{a_1} \bft_2^{a_2} \bft_3^{-\abar} = \bft_1^{l_1} \bft_2^{l_2} \bft_4^{v}, 
\text{ where } 
{\xy{l_1}{l_2}=
\cals^{\abar}\xy{a_1}{a_2}}, \text{ and } v \in \bbq.
\end{align} 
\end{lemma}

\begin{proof}
For part (a), we compute that
$ \left[ \bft_1^{r_1}, \bft_2^{r_2} \right] = \bft_4^{r_1 r_2 \det(P)} = \bft_4^{r_1 r_2}.$  

For part (b), the definition of $\lgamsig$ shows that ${\xy{l_1}{l_2}=
\cals^{\abar}\xy{a_1}{a_2}}$. 
We must show that $v$ in (\ref{thev}) is rational.

Because $a_1$ and $a_2$ are rational, there is a positive integer $n$
so that $n a_1, n a_2 \in \bbz$. By part (a),
\begin{align*}
(\bft_1^{a_1} \bft_2^{a_2})^n = \bft_1^{a_1} \bft_2^{a_2} \cdots \bft_1^{a_1} \bft_2^{a_2} (n \text{ times})
= \bft_1^{n a_1} \bft_2^{n a_2} \bft_4^{u'}, \text{ for some } u' \in \bbq.
\end{align*}
Therefore, 
\begin{align*}
\bft_3^{\abar} (\bft_1^{a_1} \bft_2^{a_2})^n \bft_3^{-\abar}  
&= \bft_3^{\abar} \bft_1^{n a_1} \bft_2^{n a_2} \bft_4^{u'}  \bft_3^{-\abar}
=\bft_3^{\abar} \bft_1^{n a_1} \bft_2^{n a_2}   \bft_3^{-\abar} \bft_4^{u'}\\
&= \bft_1^{n_1} \bft_2^{n_2} \bft_4^{u} \text{ for some } n_1,n_2 \in \bbz, \text{ and some } u \in \bbq,
\end{align*}
where the last equality follows from that $n a_1$  and $n a_2$ are integers, 
together with the relations in Lemma \ref{determine-TT}.
	
On the other hand, we have that
\begin{align*}
\bft_3^{\abar} (\bft_1^{a_1} \bft_2^{a_2})^n \bft_3^{-\abar}  &= 
\bft_3^{\abar} \bft_1^{a_1} \bft_2^{a_2}\bft_3^{-\abar} \cdots \bft_3^{\abar} \bft_1^{a_1} \bft_2^{a_2}\bft_3^{-\abar} (n \text{ times})\\
&= \bft_1^{l_1} \bft_2^{l_2} \bft_4^{v} \cdots \bft_1^{l_1} \bft_2^{l_2} \bft_4^{v}  (n \text{ times}) \\
&=  \bft_1^{n l_1} \bft_2^{n l_2}  \bft_4^{nv + w} \text{ for some } w \in \bbq,
\end{align*}
where $v$ is from (\ref{thev}) and the last equality follows from part (a).
	
	\noindent
Consequently, we have 
\[
 \bft_1^{n_1} \bft_2^{n_2} \bft_4^{u} = \bft_1^{n l_1} \bft_2^{n l_2}  \bft_4^{nv + w}.
\]
This forces $n_1= n l_1$ and $n_2= {n l_2}$. Therefore, 
$nv + w = u.$
Since $n \in \bbz$, $u, w \in \bbq$, it follows that $v \in \bbq$.

\qedhere
\end{proof}

\section
{\texorpdfstring
{Crystallographic groups of $\Solof$}
{Crystallographic groups of Solof}}
\label{solof-cryst}

Let $\varPi \subset \Solof \rx C$ be a crystallographic group of $\Solof$,
where $C$ is a maximal compact subgroup of $\aut(\Solof)$. As all maximal
compact subgroups of $\Solof$ are conjugate, we can assume that $C$ is the
maximal compact subgroup 
\[
D_4 = \left\langle \left[\begin{matrix} 0 & -1
\\ 1 & 0
\end{matrix} \right], \left[\begin{matrix} 1 & 0 \\ 0 & -1 
\end{matrix} \right] \right\rangle
\] 
of $\aut(\Solof)$ 
(Proposition \ref{BiebSol}), 
the action of which on $\Solof$ is described in Proposition
\ref{autosol14}. As noted in Proposition \ref{BiebSol}, $\Solof$ satisfies
generalization of Bieberbach's Theorems.  Furthermore, as shown below, we
can conjugate $\varPi$ in $\aff(\Solof)$ so that the lattice inside
$\varPi$ is some $\lgamsig$, embedded in $\Solof$ as in assignment
(\ref{embedSolofLattice}).

\begin{proposition}
\label{solofShape}
\noindent
{\rm{(1)}} Any crystallographic group $\varPi'$ of $\Solof$ can be conjugated in $\aff(\Solof)$ to $\varPi \subset \Solof \rx D_4$ so that 
\[
\varPi \cap \Solof=\angles{\bft_1,\bft_2,\bft_3,\bft_4^{\frac1q}},
\]
where
\[
\bft_1=
\left[\begin{matrix}
1 & p_{11} &c_1\\
0 & 1 &p_{21}\\
0 & 0 &1 \end{matrix}
\right],\
\bft_2=
\left[\begin{matrix}
1 & p_{12} &c_2\\
0 & 1 &p_{22}\\
0 & 0 &1 \end{matrix}
\right],\
\bft_3=
\left[\begin{matrix}
1 & 0 &c_3\\
0 & \lambda &0\\
0 & 0 &1 \end{matrix}
\right],\
\bft_4=
\left[\begin{matrix}
1 & 0 &1\\
0 & 1 &0\\
0 & 0 &1 \end{matrix}
\right].
\]
	
\noindent
{\rm{(2)}} The holonomy group $\Phi$ is generated by at most two elements of $D_4$, 
and thus $\varPi$ is generated by $\angles{\bft_1, \bft_2, \bft_3, \bft_4^{\frac1q}}$ and at most two 
isometries of the form $(\bft_1^{a_1} \bft_2^{a_2} \bft_3^{a_3} \bft_4^{a_4} , A)$, for $A \in D_4$ and real numbers $a_i$.
\end{proposition}

\begin{proof}
Let $\wt\Gamma=\varPi\cap\Solof$. This lattice must meet the center of $\Solof$ in a lattice:
$\wt\Gamma\cap \calz(\Solof)$ is a lattice of
$\calz(\Solof)$, say generated by $\bft_4^{\frac1q}$. Also
$\wt\Gamma\cap\Nil$ is a lattice of the nilradical $\Nil$, so we can find
generators $\angles{\bft_1,\bft_2,\bft_4^{\frac1q}}$ of this lattice as
given in the statement.
The remaining one generator for the lattice $\wt\Gamma$ must project
down to a generator of the quotient $\wt\Gamma / \angles{\bft_1,\bft_2,\bft_4^{\frac1q}} \cong \bbz$. It must be of the form
\[
\bft''_3=
\left[\begin{matrix}
1 & a &c_3\\
0 & {\lambda} &b\\
0 & 0 &1 \end{matrix}
\right]
\]
Conjugation by
$
\left[\begin{matrix}
1 & \frac{a}{1-\lambda} & 0\\
0 & {\lambda} &-\frac{b \lambda}{1-\lambda}\\
0 & 0 &1 \end{matrix}
\right]
$
maps $\bft''_3$ to the form of $\bft_3$. Note $\wt\Gamma/\calz(\wt\Gamma)$ is a lattice of $\Sol$, 
isomorphic to $\bbz^2 \rx_\cals \bbz$, for $\cals \in \sltz$, $\tr(\cals)>2$, where $P = (p_{i j})$ diagonalizes $\cals$.
As in the case of $\Sol$ (Proposition \ref{lattice-a=0}), we can assume $\det(P)=1$, so that
$[\bft_1,\bft_2]=\bft_4$.
Therefore, any lattice is conjugate to a lattice
$\angles{\bft_1,\bft_2,\bft_3,\bft_4^{\frac1q}}$ of the desired form.
\qedhere
\end{proof}

Henceforth we will assume all $\Solof$-crystallographic groups are embedded
in $\Solof \rx D_4$ as in Proposition \ref{solofShape}. However, we will
see that we can always take $c_3=0$, except possibly when the holonomy of
$\varPi$, $\Phi$, is $\bbz_4$.  Because lattices of $\Solof$ project to
lattices of $\Sol$, the projections $\Solof \ra \Sol$ and $\aut(\Solof) \ra
\aut(\Sol)$ induce a projection $\Solof \rx D_4 \ra \Sol \rx D_4$ which
carries a $\Solof$-crystallographic group $\varPi$ to a
$\Sol$-crystallographic group $Q$.  Furthermore, when $\varPi$ is embedded
in $\Solof \rx D_4$ as in Proposition \ref{solofShape}, the lattice $\wt
\Gamma_\cals = \lgamsig$ projects to a standard lattice $\gamsig$ of
$\Sol$. That is, we have the following commuting diagram:

\[
\CD
@. 1 @. 1 @.  @.\\
@. @VVV @VVV @. @.\\
@. \frac1q\bbz =\angles{\bft^\frac1q_4} @=  \frac1q\bbz =\angles{\bft^\frac1q_4}@. @.\\
@. @VVV @VVV @. @.\\
1 @>>> \tgamsig @>>> \varPi @>>> \Phi @>>> 1\\
@. @VVV @VVV @| @.\\
1 @>>> \gamsig @>>> Q @>>> \Phi @>>> 1\\
@. @VVV @VVV @. @.\\
@. 1 @. 1 @.  @.\\
\endCD
\]

Our goal is finding all crystallographic groups $\varPi$ of $\Solof$ which
project down to $Q$. In general, it is \emph{not} true that there exists
$\varPi$ fitting the above commutative diagram of exact sequences 
without making the kernel $\angles{\bft_4}$ finer to
$\angles{\bft_4^{1/q}}$. That is, even though $\wt\Gamma_{\cals}$
always exists, for $\varPi$ to exist, sometimes the kernel
$\bbz=\angles{\bft_4}$ needs to be ``inflated'' to $\tfrac1q
\bbz=\angles{\bft^{1/q}_4}$. It turns out that, after appropriate
inflation, an extension $\varPi$ always exists.

The abstract kernel of $\Phi \ra \out(\gamsig)$ is given by, for $A \in \Phi$, 
\[
\mu(\alpha): \gamsig \ra \gamsig, \text{ where } \alpha =
(\bft_1^{a_1}\bft_2^{a_2}\bft_3^{a_3},A) \in Q.
\]
Here $\mu(\alpha)$ denotes conjugation in $\Sol\rx D_4$.  Suppose in {\rm
Proposition \ref{solofShape}}, we have fixed the $c_i$, as well as set
$q=1$, thus fixing the lattice
\[
\wt \Gamma_{(\cals;1,n_1,n_2)} = \angles{\bft_1, \bft_2, \bft_3, \bft_4}
\hookrightarrow \Solof.
\]
For any generator $A \in \Phi$, let
\[
\alpha = (\bft_1^{a_1} \bft_2^{a_2} \bft_3^{a_3} \bft_4^{{a_4}}, A) = (a,A).
\]
the 
effect that conjugation by $\alpha$ has on $\wt \Gamma_{(\cals;1,n_1,n_2)}$. Note that conjugation by $\alpha$ is independent of $a_4$.
We have the relations:
\begin{align*}
&\alpha \bft_1 \alpha\inv=\bft_1^{m_1}\bft_2^{m_2}  \bft_4^{{v_1}},
\text{ where } {\xy{m_1}{m_2}=\varphi(\alphabar)\xy{1}{0}},\\
&\alpha \bft_2 \alpha\inv=\bft_1^{n_1}\bft_2^{n_2} \bft_4^{{v_2}},
\text{ where } {\xy{n_1}{n_2}=\varphi(\alphabar)\xy{0}{1}},\\
&\alpha\bft_3\alpha\inv=\bft_1^{w_1}\bft_2^{w_2}\bft_3^{\abar} \bft_4^{{v_3}},
\text{ where } {\xy{w_1}{w_2}=
\left(I-\cals^{\abar}\right)\xy{a_1}{a_2}},\\
&\alpha\bft_4 \alpha\inv={\bft_4^{\ahat}},
\end{align*}

\noindent
We will need the following lemma on the $v_i$.
\begin{lemma}
\label{relations}
The numbers $v_1$ and $v_2$ are rational. Furthermore, we can adjust $c_3$ so that $v_3$ is rational.
\end{lemma}
\begin{proof}	
Note that the image of $\wt \Gamma_{(\cals;1,n_1,n_2)}$ under conjugation by $\alpha$,
\[
\mu(\alpha)(\wt \Gamma_{(\cals;1,n_1,n_2)} ) = \alpha \wt \Gamma_{(\cals;1,n_1,n_2)} \alpha \inv ,
\]
is a lattice
of $\Solof$ lifting the standard lattice $\gamsig$ of $\Sol$.
	
All such lifts are given in Lemma \ref{determine-TT}. In equation (\ref{c1c2-equation}), 
we see that for any two solutions $c_1, c_2$ and $c'_1, c'_2$, both $c'_1 - c_1$ and $c'_2 - c_2$ must be rational. 
Thus $v_1$ and $v_2$ are rational numbers.
	
From Proposition \ref{autosol14}, $A \in \Phi \subseteq D_4$ can be viewed as an element of $\gltz$. 
The induced action of $A$ on $\calz(\Solof)$ is multiplication by $\ahat = \det(A)$, 
and the induced action of $A$ on $\Solof / \Nil \cong \bbr$ is multiplication by $\abar$. 
We need to understand the action of $A$ on the generator $\bft_3$ of $\wt \Gamma_{(\cals;1,n_1,n_2)}$. 
Let $\hat \bft_3$ denote $\bft_3$ with the $(1,3)$-slot set to be zero, so that $\bft_3 = \hat \bft_3 \bft_4^{c_3}$:
\begin{align*}
A(\bft_3) &= A(\hat \bft_3 \bft_4^{c_3}) = A(\hat \bft_3) A(\bft^{c_3}_4)
=\hat \bft_3^{\abar} \bft^{\ahat c_3}_4
=(\hat \bft_3^{\abar} \bft^{\abar c_3}_4) (\bft^{-\abar c_3}_4
\bft^{\ahat c_3}_4) \\
&= \bft_3^{\abar}  \bft^{(\ahat -\abar) c_3}_4 .
\end{align*}

In order to show
\begin{align}
\label{teh}
\alpha\bft_3\alpha\inv=\bft_1^{w_1}\bft_2^{w_2}\bft_3^{\abar} \bft_4^{{v_3}},
\text{ where } {\xy{w_1}{w_2}=
\left(I-\cals^{\abar}\right)\xy{a_1}{a_2}},
\end{align}
we need only consider two cases, either $a_3=\tfrac12$ or $a_3=0$. 

First, consider the case when $a_3=\frac{1}{2}$. Then $A$ must be diagonal, so that $\abar=+1$. 
By Corollary \ref{a-conj-zero3}, we can take $a_1=a_2=0$ so that $\alpha = (\bft_3^{\frac{1}{2}} \bft_4^{a_4}, A)$, so
\begin{align*}
\alpha\bft_3\alpha\inv &= \bft_3^{\frac{1}{2}} A(\bft_3 ) \bft_3^{-\frac{1}{2}} 
=\bft_3^{\frac{1}{2}} \bft_3^{\abar}  \bft^{(\ahat -\abar) c_3}_4 \bft_3^{-\frac{1}{2}} = \bft_3 \bft_4^{(\ahat-1)c_3}.
\end{align*}
Since $\ahat=\pm 1$, there is a choice of $c_3$ which makes $(\ahat-1)c_3 \in \bbq$.
	
\noindent
Now consider the case $a_3=0$, so that
$
\alpha = (\bft_1^{a_1} \bft_2^{a_2}  \bft_4^{a_4}, A)
$.
We compute:
	
\begin{align*}
\alpha\bft_3\alpha\inv &=\bft_1^{a_1} \bft_2^{a_2} A(\bft_3 ) \bft_2^{-a_2} \bft_1^{-a_1} 
= \bft_1^{a_1} \bft_2^{a_2} \left( \bft_3^{\abar}  \bft^{(\ahat -\abar) c_3}_4 \right) \bft_2^{-a_2} \bft_1^{-a_1}\\
&=\left(\bft_1^{a_1} \bft_2^{a_2} \bft_3^{\abar} \bft_2^{-a_2} \bft_1^{-a_1} \bft_3^{-\abar} \right) \bft_3^{\abar} \bft_4^{(\ahat-\abar) c_3}.
\end{align*}
Now by Lemma \ref{rational}, and using that $a_1$, $a_2$ are rational, we have 
\begin{align*}
\left( \bft_1^{a_1} \bft_2^{a_2} \bft_3^{\abar} \bft_2^{-a_2} \bft_1^{-a_1} \bft_3^{-\abar} \right) \bft_3^{\abar} \bft_4^{(\ahat-\abar) c_3} &= 
\left( \bft_1^{b_1}  \bft_2^{b_2} \bft_4^{u}  \right) \bft_3^{\abar}  \bft_4^{(\ahat-\abar) c_3} \\
&= \bft_1^{b_1}  \bft_2^{b_2} \bft_3^{\abar}  \bft_4^{u+(\ahat-\abar) c_3},
\end{align*}
for a rational number $u$. Equating this with equation (\ref{teh}), we obtain
\begin{align*}
\bft_1^{w_1}\bft_2^{w_2}\bft_3^{\abar} \bft_4^{{v_3}} =  \bft_1^{b_1}  \bft_2^{b_2} \bft_3^{\abar}  \bft_4^{u+(\ahat-\abar) c_3}.
\end{align*}
Now $w_1=b_1$ and $w_2=b_2$ is forced. Therefore,
$v_3 = u+(\ahat-\abar) c_3.$
Because $\ahat=\pm 1$, $\abar= \pm 1$, and $u$ is rational,
$c_3$ can always be chosen so that $v_3$ is rational.\qedhere
\end{proof}

\begin{proposition}
Let $Q \hra \Sol \rx D_4$ be a crystallographic group of $\Sol$ with lattice $\gamsig$. 
Then there exists a lattice $\lgamsig = \angles{\bft_1, \bft_2, \bft_3, \bft_4^{\frac{1}{q}}}$ of $\Solof$, 
projecting to $\gamsig$, for which
the abstract kernel $\Phi \ra \out(\gamsig)$
induces $\Phi \ra \out(\lgamsig)$.
\end{proposition}
\begin{proof}

For any integer $q>0$, we add a finer generator of the central direction to
the group  $\wt \Gamma_{(\cals;1,n_1,n_2)}$ to obtain $\angles{\wt
\Gamma_{(\cals;1,n_1,n_2)}, \bft_4^{\frac {1}{q}}} = \wt
\Gamma_{(\cals;q,q n_1,q n_2)}.$
	
Now, for each generator $A \in \Phi$, the $v_i$ in Proposition \ref{relations} are rational. 
Therefore, for $q$ large enough, $\wt \Gamma_{(\cals;q,q n_1,q n_2)}$ is invariant under conjugation by 
$(\bft_1^{a_1}\bft_2^{a_2}\bft_3^{a_3} \bft_4^{a_4},A)$, for each $A \in \Phi$. 
As this conjugation is independent of lift of $(\bft_1^{a_1}\bft_2^{a_2}\bft_3^{a_3},A) \in \Sol \rx D_4$ to 
$(\bft_1^{a_1}\bft_2^{a_2}\bft_3^{a_3} \bft_4^{a_4},A) \in \Solof \rx D_4$, with $m_1 = q n_1$ and $m_2 = q n_2$,
we obtain an abstract kernel $\Phi \ra \out(\lgamsig)$.
\qedhere
\end{proof}

\begin{proposition}
\label{inflation}
Let $Q\hookrightarrow \Sol\rx D_4$ be a crystallographic group of $\Sol$
containing lattice $\gamsig$. Assume that the abstract kernel $\Phi \ra \out(\gamsig)$
induces $\Phi \ra \out(\lgamsig)$. Then for some $p>0$, there exists $\varPi$ which fits the following commuting diagram
\[
\CD
@. 1 @. 1 @.  @.\\
@. @VVV @VVV @. @.\\
@. \frac1{pq}\bbz @=  \frac1{pq}\bbz @. @.\\
@. @VVV @VVV @. @.\\
1 @>>> \wt \Gamma_{(\cals;pq,p m_1,p m_2)} @>>> \varPi @>>> \Phi @>>> 1\\
@. @VVV @VVV @| @.\\
1 @>>> \gamsig @>>> Q @>>> \Phi @>>> 1\\
@. @VVV @VVV @. @.\\
@. 1 @. 1 @.  @.\\
\endCD
\]
\end{proposition}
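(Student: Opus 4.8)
The statement is an existence result, and I would prove it by realizing $\varPi$ as an extension of $Q$ with kernel the central lattice $\frac1q\bbz=\lgamsig\cap\calz(\Solof)$: that is, I would fill in a short exact sequence
\[
1\lra \tfrac1q\bbz\lra\varPi\lra Q\lra 1
\]
whose restriction over $\gamsig\subset Q$ is the central extension $1\to\frac1q\bbz\to\lgamsig\to\gamsig\to 1$ furnished by Theorem \ref{lattice-part-construction}. The ingredients are already in place. By Proposition \ref{autosol14} every $A\in D_4$ acts on $\Solof$ compatibly with its action on $\Sol=\Solof/\calz$, so there is a projection $\Solof\rx D_4\to\Sol\rx D_4$ with central kernel $\calz(\Solof)=\bbr$ and $Q\subset\Sol\rx D_4$ sits below it. Concretely I would lift each generator of $Q$ beyond $\gamsig$, namely $\alpha=(\,\cdot\,,A)$ and (when present) $\beta=(\,\cdot\,,B)$, to an element of $\Solof\rx D_4$ covering it, and set $\varPi=\angles{\lgamsig,\tilde\alpha,\tilde\beta}$.

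For this $\varPi$ to be a group mapping onto $Q$ the defining relations of $Q$ must lift. Every relation that holds in $\gamsig\subset\Sol$ holds in $\lgamsig\subset\Solof$ up to a correction lying in $\calz(\Solof)=\bbr$; the relations that must close up are $\tilde\alpha^2$, $\tilde\beta^2$, $[\tilde\alpha,\tilde\beta]$ and the conjugation rules $\tilde\alpha\bft_3\tilde\alpha\inv$, $\tilde\alpha\bft_i\tilde\alpha\inv$. Cohomologically this is the question of whether the class $u$ of $\lgamsig$ in $H^2(\gamsig;\frac1q\bbz)$, which Theorem \ref{lattice-part-construction} identifies with $\{q,(m_1,m_2)\}\in\bbz\oplus\coker(\cals-I)$, extends to a class in $H^2(Q;\frac1q\bbz)$, where $Q$ acts on $\frac1q\bbz\subset\calz(\Solof)$ through $A\mapsto\ahat=\det(A)\in\{\pm1\}$. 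Running the Lyndon--Hochschild--Serre spectral sequence of $1\to\gamsig\to Q\to\Phi\to 1$, the obstructions to extending $u$ are the transgressions $d_2(u)\in H^2(\Phi;H^1(\gamsig;\frac1q\bbz))$ and $d_3(u)\in H^3(\Phi;\frac1q\bbz)$. The free part of $u$ is automatically $\Phi$-invariant, because $A$ acts by the single sign $\det(A)$ on both $\textstyle\bigwedge^2\bbz^2\cong\bbz$ and on the central coefficient $\frac1q\bbz$, so that the commutator pairing $[\bft_1,\bft_2]=\bft_4$ is $\Phi$-equivariant; hence the only obstructions are the two transgression classes above.

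This is exactly where inflation enters. Replacing $q$ by $Nq$ refines the central lattice through the inclusion $\frac1q\bbz\hookrightarrow\frac1{Nq}\bbz$, which under the identification of each group with $\bbz$ is multiplication by $N$; the induced maps on $H^2(\Phi;H^1(\gamsig;-))$ and $H^3(\Phi;-)$ are therefore multiplication by $N$, and they carry $d_2(u),d_3(u)$ to the corresponding transgressions for the inflated data. Since $\Phi$ is finite, both groups $H^2(\Phi;H^1(\gamsig;\frac1q\bbz))$ and $H^3(\Phi;\frac1q\bbz)$ are finite; choosing $N$ a common multiple of their exponents annihilates both transgressions, so the extension class $\tilde u$, and with it $\varPi$, exists. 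It then remains only to note that the resulting $\varPi\subset\Solof\rx D_4$ has $\varPi\cap\Solof=\lgamsig$ a lattice and $\varPi/\lgamsig\cong\Phi$ finite, hence is discrete and cocompact, and that the rows and columns of the displayed diagram are exact by construction.

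I expect the main obstacle to be the identification and control of the obstruction, rather than its vanishing: one must verify that the only genuine obstruction to lifting is the transgression into the finite group $H^3(\Phi;\frac1q\bbz)$ (the free part being disposed of by equivariance of the commutator pairing), and that this class is multiplied by $N$ under inflation. In the hands-on version this is precisely the statement that the central discrepancies forced by the relations $\tilde\alpha^2,\tilde\beta^2,[\tilde\alpha,\tilde\beta],\dots$ are rational multiples of $\bft_4$, so that a single sufficiently divisible $q$ places all of them simultaneously in $\frac1q\bbz$.
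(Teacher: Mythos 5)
Your argument is correct in substance and rests on exactly the same mechanism as the paper's proof --- inflate the central lattice from $\bbz$ to $\frac1q\bbz$ so that a finite cohomological obstruction, being multiplied by $q$ under the coefficient inclusion, dies --- but you package the obstruction differently. The paper works with the non-abelian kernel directly: the abstract kernel $\Phi\to\out(\Gamma_\cals)$ furnished by $Q$ is asserted to induce $\Phi\to\out(\wt\Gamma_\cals)$, and the single Eilenberg--MacLane obstruction to realizing that abstract kernel by an extension $1\to\wt\Gamma_\cals\to\varPi\to\Phi\to 1$ lies in $H^3(\Phi;\calz(\wt\Gamma_\cals))=H^3(\Phi;\bbz)$, which is annihilated by $|\Phi|$ and hence vanishes after the coefficients are inflated to $\frac1q\bbz$; that $\varPi/\tfrac1q\bbz\cong Q$ then comes for free because $\gamsig$ is centerless, so the extension of $\Phi$ by $\gamsig$ with the given abstract kernel is unique. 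You instead extend the central extension class $u\in H^2(\gamsig;\frac1q\bbz)$ across $\gamsig\subset Q$ using the Lyndon--Hochschild--Serre spectral sequence, trading the paper's single $H^3$ obstruction for the pair $d_2,d_3$ together with the $\Phi$-invariance of $u$. Your $d_2$ and the invariance condition are precisely the content of the step the paper leaves unproved (that the abstract kernel really does lift to $\out(\wt\Gamma_\cals)$), so your version makes explicit something the paper glosses over, at the cost of more bookkeeping; both approaches then close with the identical "multiply by $N$ and use finiteness" step.

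The one loose end is the invariance of $u$. You check it only for the free part of $H^2(\gamsig;\frac1q\bbz)$, via the $\det$-equivariance of the commutator pairing, and then conclude that the only obstructions are the two transgressions. But $u$ also has a torsion component $(m_1,m_2)\in\coker(\cals-I)$, on which $\Phi$ acts through $\varphi$ twisted by $\ahat$, and this action is in general nontrivial; for arbitrary $m_1,m_2$ the class therefore need not lie in $E_2^{0,2}=H^2(\gamsig;\frac1q\bbz)^\Phi$ as written. The gap is cured by the very mechanism you already invoke: the pushout along $\frac1q\bbz\hookrightarrow\frac1{Nq}\bbz$ multiplies the torsion component of $u$ by $N$ as well, so choosing $N$ divisible not only by $|\Phi|$ but also by the exponent of $\coker(\cals-I)$ makes $\iota_*u$ invariant, after which the rest of your argument goes through verbatim.
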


\begin{proof}
Since
the center of $\wt \Gamma_{(\cals;q, m_1, m_2)}$ is $\frac1{q} \bbz$ and $\Phi$ is finite, $H^3(\Phi;\tfrac1{q} \bbz)$
is finite. This means the obstruction class to the existence of the
extension vanishes if we use $\frac1{pq} \bbz$ for the coefficients, for some
$p>0$. That is, it vanishes inside $H^3(\Phi;\tfrac1{pq} \bbz)$. Thus, with such
$pq$, the center of $\wt \Gamma_{(\cals;pq,p m_1,p m_2)}$ is $\frac1{pq} \bbz$, and an extension $\varPi$
exists.
\qedhere
\end{proof}

So we can assume that after appropriate inflation, there exists
an extension $\varPi$ with lattice $\lgamsig$, for some $q>0$. The Seifert
Construction will show that such an abstract extension actually embeds in
$\Solof \rx D_4$ as a crystallographic group. 
By taking $pq$ as a new $q$, we have:

\begin{theorem}%[Seifert Construction]
\label{seifert-const}
Let $\tgamsig=\lgamsig$ be a lattice of $\Solof$,
and
\[
1\lra \tgamsig \lra \varPi \lra \Phi \lra 1
\]
be an extension of $\tgamsig$ by a finite group $\Phi$ from Proposition \ref{inflation}
Then there exists an injective homomorphism
\[
\theta: \varPi\ra \Solof\rx D_4 \subset \Solof\rx\aut(\Solof)
\]
carrying $\tgamsig$ onto a standard lattice.
Such $\theta$ is unique up to conjugation by an element of
$\Solof\rx\aut(\Solof)$.
\end{theorem}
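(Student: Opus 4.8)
The plan is to realize $\theta$ through the Seifert Construction of \cite[Theorem 7.3.2]{seif-book}, applied to the simply connected solvable Lie group $\Solof$ using its characteristic center $\calz(\Solof)=\bbr$ and the quotient $\Solof/\calz=\Sol$. First I would identify the bottom data. The center of $\gammt=\lgamsig$ is $\tfrac1q\bbz$ (generated by $\bft_4^{1/q}$), which is characteristic in $\gammt$; since $\gammt\lhd\varPi$ it is normal in $\varPi$, and $\Phi$ acts on it by $\pm1$, matching the action of $D_4$ on $\calz(\Solof)$ by $A\mapsto\det(A)=\ahat$ recorded in Section \ref{section1}. Passing to the quotient gives
\[
1\lra\gamsig\lra Q:=\varPi/\tfrac1q\bbz\lra\Phi\lra 1,
\]
with $\gamsig$ a $\Sol$-lattice and $\Phi$ finite. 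The abstract kernel $\Phi\ra\out(\gammt)$ carried by $\varPi$ is realized by the standard copy $D_4\subset\aut(\Solof)$ of Proposition \ref{autosol14}; this is exactly what makes $Q$ a genuine $\Sol$-crystallographic group, so $Q$ embeds in $\Sol\rx D_4$ and acts properly discontinuously and cocompactly on $W=\Sol$.

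With this bottom action in hand, the Seifert data consist of: the central fibering $1\lra\bbr\lra\Solof\lra\Sol\lra 1$; the extension $1\lra\tfrac1q\bbz\lra\varPi\lra Q\lra 1$, viewing $\tfrac1q\bbz\subset\bbr$; and the homomorphism $\varPi\ra\aut(\calz(\Solof))$ determined by the composite $Q\ra D_4\ra\aut(\bbr)$ given by $\det$. The only condition to verify is that the extension class of $\varPi$ is compatible with the central class defining $\Solof$ over $\Sol$, i.e. that the Seifert obstruction vanishes. This is precisely what the inflation of Proposition \ref{inflation} supplies: after replacing $\bbz$ by $\tfrac1q\bbz$ the obstruction in $H^3(\Phi;\tfrac1q\bbz)$ dies, since $\Phi$ is finite. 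Theorem 7.3.2 then produces a homomorphism $\theta:\varPi\ra\Solof\rx D_4\subset\Solof\rx\aut(\Solof)$ that is equivariant over the $Q$-action on $\Sol$ and restricts to an embedding of $\gammt$ as a lattice of $\Solof$.

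To see that the image lattice can be taken standard, I would invoke Lemma \ref{determine-TT} together with the remark following it: the constants $c_1,c_2$ are forced by $(q,m_1,m_2)$, while $c_3$ is free and can be conjugated to $0$ without changing the isomorphism type, so after an inner adjustment $\theta(\gammt)$ is the standard lattice of Notation \ref{standard-lattice}. For injectivity, observe that $\theta|_{\gammt}$ is already injective, so $\ker\theta\cap\gammt=1$ and $\ker\theta$ maps injectively into $\Phi$; thus $\ker\theta$ is a finite normal subgroup of $\varPi$ isomorphic to the unfaithful part of the holonomy representation $\Phi\ra D_4$. Hence $\theta$ is injective precisely when $\Phi\ra D_4$ is faithful, i.e. when $\Phi\subset D_4$. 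Uniqueness up to conjugation is the rigidity clause of the Seifert Construction: since $\Solof$ is type (R) and the images are $\Solof$-crystallographic groups, any two such $\theta$ are conjugate by an element of $\aff(\Solof)=\Solof\rx\aut(\Solof)$, which is also the content of the Bieberbach rigidity theorem \cite[Theorem 8.4.3]{seif-book} quoted above.

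The main obstacle is the compatibility and vanishing of the Seifert obstruction: one must match the cohomology class of the abstract extension $\varPi$ with the central extension class defining $\Solof$ over $\Sol$, and verify that this succeeds only after the inflation to $\tfrac1q\bbz$. Confirming that Proposition \ref{inflation} delivers exactly the coefficient change needed to kill the class in $H^3(\Phi;\tfrac1q\bbz)$ — and that the resulting $\theta$ lands in the compact-holonomy subgroup $\Solof\rx D_4$ rather than in a larger part of $\aut(\Solof)$ — is the crux; once the hypotheses of \cite[Theorem 7.3.2]{seif-book} are checked in this concrete setting, existence, the injectivity criterion, and uniqueness all follow formally from the cited machinery.
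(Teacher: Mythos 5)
Your route is genuinely different from the paper's, and as written it has two gaps. The paper applies the Seifert construction in one step, with $G=\Solof$ and $W=\{\text{point}\}$: the abstract kernel $\varPi\ra\out(\gammt)\ra\out(\Solof)$ has finite image, which lifts to a finite subgroup $C\subset\aut(\Solof)$ that can be conjugated into the maximal compact $D_4$; existence of $\theta$ then reduces to the vanishing $H^2(\Phi;\Solof)=1$, and uniqueness to $H^1(\Phi;\Solof)=0$ (or \cite[Corollary 7.7.4]{seif-book}). You instead propose a two-step fibered construction: first push $\varPi$ down to $Q=\varPi/\tfrac1q\bbz$ acting on $\Sol$, then lift through the central fibering $\bbr\ra\Solof\ra\Sol$. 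The first gap is that your step one -- ``$Q$ is a genuine $\Sol$-crystallographic group, so $Q$ embeds in $\Sol\rx D_4$ and acts properly discontinuously and cocompactly on $\Sol$'' -- is precisely the statement of the theorem one dimension down, and you do not prove it; it requires its own Seifert/lifting argument (finite image in $\out(\gamsig)\ra\out(\Sol)$ lifting to a compact subgroup conjugate into $D_4$, plus $H^2(\Phi;\Sol)=1$). So the problem has been reduced to an unproven instance of itself together with a lifting step.

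The second gap is the obstruction discussion. Proposition \ref{inflation} kills the class in $H^3(\Phi;\tfrac1q\bbz)$ that obstructs the existence of \emph{some} extension $\varPi$ over a given $Q$ with the given abstract kernel. In the present theorem $\varPi$ is handed to you, so that obstruction is vacuously zero and the proposition is not the relevant tool. What actually needs to be shown in your fibered setup is that the \emph{specific} extension class of $1\ra\tfrac1q\bbz\ra\varPi\ra Q\ra 1$ in $H^2(Q;\tfrac1q\bbz)$ is realized by a subgroup of $\Solof\rx D_4$ sitting over the chosen embedding $Q\hra\Sol\rx D_4$ -- a surjectivity statement about the map from geometric extensions to algebraic ones (in the paper's framework this is exactly what the uniformization $H^2(\Phi;\Solof)=1$, or the analysis of Sections 5--6 via $\coker(\cals-I)$ and the parameters $c_1,c_2,c_3,a_4,b_4$, delivers). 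You flag this as ``the crux'' but the citation you offer does not address it. The remaining pieces of your argument (normalizing the lattice to standard form via Lemma \ref{determine-TT}, and the injectivity criterion via $\ker\theta\cap\gammt=1$) are fine and consistent with the paper.
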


\begin{proof}
This is a consequence of the Seifert construction, since
$\Solof$ is completely solvable. We can apply
\cite[Theorem 7.3.2]{seif-book} with $G=\Solof$ and $W=\text{\{point\}}$. Since
$\Phi$ is finite,
the homomorphism $\varPi\ra\out(\gammt)\ra\out(\Solof)$ has
finite image in $\out(\Solof)$, and it lifts back to a finite subgroup $C$ of
$\aut(\Solof)$. But this $C$ can be conjugated into
$D_4\subset \aut(\Solof)$, a maximal compact subgroup.
Consequently, we have a commuting diagram
\[
\CD
1 @>>> \tgamsig @>>> \varPi @>>> \Phi @>>> 1\\
@. @VVV @VVV @VVV @.\\
1 @>>> \Solof   @>>> \Solof\rx D_4 @>>> D_4 @>>> 1
\endCD
\]
The homomorphism $\varPi \ra \Solof \rx D_4$ is injective since the abstract kernel $\Phi \ra \out(\tgamsig)$ 
from Proposition \ref{inflation} is injective.
The essence of the argument is showing that the cohomology set
$H^2(\Phi;\Solof)$ is trivial for any finite group $\Phi$.
The uniqueness is a result of \cite[Corollary 7.7.4]{seif-book}. It also
comes from $H^1(\Phi;\Solof)=0$.
\qedhere
\end{proof}
After inflation, the Seifert Construction produces a crystallographic group of $\Solof$.
Often we can assume that $c_3 = 0$, that is, $\lgamsig$ is
a standard lattice of $\Solof$.
Recall that $\aut(\Solof)=\bbr\rx\aut(\Sol)$ (Proposition \ref{autosol14}),
where $\hat k \in \bbr$ acts by
\[
\left[
\begin{matrix} 1 &e^u x &z\\
0 &e^u &y\\
0 &0 &1 \end{matrix}
\right]
\longmapsto
\left[
\begin{matrix} 1 &e^u x &z + k u \\
0 &e^u &y\\
0 &0 &1 \end{matrix}
\right].
\]
We have the following:

\begin{theorem}
\label{abstract-kernel-theorem}
For all holonomy groups,
except $\bbz_4$, a crystallographic group $\varPi$ of $\Solof$ embeds into
$\Solof\rx D_4$ in such a way that $\varPi\cap\Solof$ is a standard lattice {\rm ($c_3=0$)}.
	
\end{theorem}
\begin{proof}
Let $e$ denote the identity element of $\Solof$.
For the statement concerning $c_3$,
conjugation by $(e,\hat k)$ with $k=-\frac {c_3} {{\ln \lambda}}$ sets $c_3=0$ in $\bft_3$.
However, this conjugation moves  $D_4$ to $\hat k D_4 \hat k \inv$.
	
Suppose every $A\in \Phi$ satisfies $\abar\hat A=+1$.
Since such $A$ commute with $\hat k$,
conjugation by $(e,\hat k)$ leaves the holonomy group $\Phi$ inside $D_4$ while
setting $c_3=0$ in $\bft_3$.
This applies to, from the list of Theorem \ref{abstract-kernel},
all the groups lifting $\Sol$-crystallographic groups of type \gr{2a}, \gr{2b}, \gr{3}, \gr{3i}, \gr{6a}, \gr{6ai}, \gr{6b}, and
\gr{6bi}.
	
Suppose $\Phi$ contains $A=\matPpm$.  Then Corollary \ref{a-conj-zero3} and
Lemma \ref{conj-in-t4} below
show that a generator $\alpha$ of $\varPi$ projecting to $A \in \Phi$ can be conjugated to $\alpha=(\bft_3^{\frac12},A)$
(so that $a_1=a_2=a_4=0$).
Then, we shall show that
$\bft_3=\hat\bft_3\bft_4^{c_3}$ can be replaced by $\hat\bft_3$
(where $\hat\bft_3$ is $\bft_3$ with $c_3=0$).
\begin{align*}
\alpha^2
&=(\bft_3^{\frac12},A)^2
=((\hat\bft_3\bft_4^{c_3})^{\frac12},A)^2
=(\hat\bft_3\bft_4^{c_3})^{\frac12} A((\hat\bft_3\bft_4^{c_3})^{\frac12})\\
&=\hat\bft_3^{\frac12} \bft_4^{\frac{c_3}2} \cdot \hat\bft_3^{\frac12} \bft_4^{-\frac{c_3}2}
=\hat\bft_3.
\end{align*}
Thus $\hat\bft_3=\alpha^2\in\varPi$, and we can take $\hat\bft_3$ instead of
$\bft_3$ as a generator for the same group (which is
apparently redundant since $\alpha$ is in the group already).
This shows that 
$\bft_4^{c_3}=\alpha^{-2} \bft_3 \in\varPi$ must be a multiple of $\frac1q$, and
we can take $c_3=0$.
From the list in Theorem \ref{abstract-kernel},
the groups \gr{1}, \gr{5}, \gr{7} and \gr{7i}
contain such an $A$ in the holonomy.
	
The only case that is not covered by these two cases is when $\Phi=\bbz_4$
(type \gr{4} in the list), which is discussed below in our main classification (Theorem 
\ref{ClassificationSolof-geometry}).\qedhere
\end{proof}

\begin{lemma}
\label{conj-in-t4}
If $\det(A)=-1$, by conjugation, $a_4$ can be made 0.
\end{lemma}

\begin{proof}
Suppose $\det(A)=-1$. Conjugation by
$\bft_4^{-\frac{a_4}{2}}$ fixes the lattice $\lgamsig$,
and moves $(\bft_1^{a_1}\bft_2^{a_2}\bft_3^{a_3}\bft_4^{{a_4}},A)$ to
$(\bft_1^{a_1}\bft_2^{a_2}\bft_3^{a_3},A)$.
\end{proof}

\begin{proposition}[Fixing $a_4, b_4$]
Consider the commuting diagram in {\rm Proposition \ref{inflation}}.
Given $Q$ and integers $q, m_1, m_2$, we had $\lgamsig$. The only thing
that remains for the construction of $\varPi$ is fixing $a_4,b_4$. As is
known, all the extensions $\varPi$ in the short exact sequence
\[
1\ra\lgamsig\ra\varPi\ra \Phi\ra 1
\]
are classified by $H^2(\Phi;\calz(\lgamsig))=H^2(\Phi;\bbz)$.
When $\Phi=\angles{A}$,
\[
H^2(\bbz_p;\bbz)=
\begin{cases}
0,       &\text{if $\ahat=-1$};\\
\bbz_p,  &\text{if $\ahat=1$},
\end{cases}
\]
see \textup{\cite[Theorem 7.1, p.122]{maclane}}
.
	
In actual calculation, this becomes an equation
\[
\alpha^p=\bft_1^{n_1} \bft_2^{n_2} \bft_3^{n_3} \bft_4^{k_4}
\]
for integers $n_i$ and  $k_4=\tfrac{i}{q}$, $i=0,1,\cdots,p-1$.
\qed
\end{proposition}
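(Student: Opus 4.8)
The plan is to read the statement as an instance of the Eilenberg--MacLane classification of group extensions with a prescribed abstract kernel, specialized to our very explicit situation. First I would fix, once and for all, the abstract kernel $\Phi\to\out(\lgamsig)$ obtained by lifting the action $\Phi\to\out(\gamsig)$ determined by $\varphi$ in Theorem~\ref{abstract-kernel}; this is exactly the action used to build $\lgamsig$ in Proposition~\ref{inflation}. With this kernel fixed, and with the $H^3$-obstruction already killed by the inflation of the center to $\frac1q\bbz$ in Proposition~\ref{inflation}, the set of equivalence classes of extensions $1\to\lgamsig\to\varPi\to\Phi\to 1$ inducing it forms a torsor under $H^2(\Phi;\calz(\lgamsig))$ once one extension is fixed. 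Since $\calz(\lgamsig)=\frac1q\bbz\cong\bbz$ is generated by $\bft_4^{1/q}$, and since each $A\in\Phi$ scales the central slot by $\det(A)=\ahat$ (as recorded after diagram~(\ref{solof-to-sol})), the coefficient module is $\bbz$ with $\Phi$ acting through $\ahat$.

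Next I would compute this group for cyclic $\Phi=\bbz_p=\angles{A}$ using the standard $2$-periodic free resolution of $\bbz$ over the group ring of $\bbz_p$, which gives $H^2(\bbz_p;\bbz)=\bbz^{\bbz_p}/N\bbz$ with norm $N=1+\ahat+\cdots+\ahat^{p-1}$. When $\ahat=+1$ the action is trivial, $N=p$, and $H^2=\bbz/p\bbz=\bbz_p$; when $\ahat=-1$ (forcing $p=2$) the module is the sign representation, so $\bbz^{\bbz_2}=0$ and $H^2=0$. This is precisely \cite[Theorem 7.1, p.122]{maclane}, and it yields the two displayed cases.

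Then I would make the invariant concrete. A lift $\alpha$ of $A$ has the form $(\bft_1^{a_1}\bft_2^{a_2}\bft_3^{a_3}\bft_4^{a_4},A)$, and since $A^p=I$ in $\Phi$ the element $\alpha^p$ lies in $\lgamsig$; writing $\alpha^p=\bft_1^{*}\bft_2^{*}\bft_3^{*}\bft_4^{k_4}$, the starred exponents are pinned down by the already-fixed data $a_1,a_2,a_3$ and the lattice relations, so the only free quantity is the central exponent $k_4$. I would check that replacing $\alpha$ by $\gamma\alpha$ for $\gamma\in\lgamsig$ changes $k_4$ exactly by the norm subgroup $N\cdot\frac1q\bbz$, which identifies the residue of $k_4$ with the class in $H^2(\bbz_p;\bbz)$; the $p$ representatives are then $k_4=i/q$, $i=0,\dots,p-1$, matching $\bbz_p$ when $\ahat=+1$ and collapsing to a single value (so $a_4$ may be set to $0$ by Lemma~\ref{conj-in-t4}) when $\ahat=-1$.

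Finally, for non-cyclic $\Phi=\angles{A,B}$ I would observe that the determinant-one elements of $D_4$ are exactly the rotation subgroup $\{I,-I,\matNpm,\matNmp\}\cong\bbz_4$, which is cyclic; hence two independent generators cannot both satisfy $\ahat=\bhat=+1$, so at least one of $A,B$ has determinant $-1$. Restricting the extension to the cyclic subgroup generated by that element and invoking the $\ahat=-1$ case gives vanishing $H^2$, whence Lemma~\ref{conj-in-t4} normalizes the corresponding $a_4$ or $b_4$ to $0$, while the remaining generator's exponent is fixed by its own cyclic computation. I expect the only real work to be the concrete matching in the third paragraph --- verifying that the change-of-lift formula for $\alpha^p$ reproduces exactly the coboundaries $N\cdot\frac1q\bbz$ of the bar resolution; the cohomology computation and the determinant observation for $D_4$ are routine once the coefficient module is identified.
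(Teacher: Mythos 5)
Your proposal is correct and follows essentially the same route the paper intends: the paper states this proposition with no separate proof beyond the citation of Mac~Lane, and your writeup supplies exactly the intended details --- the torsor structure under $H^2(\Phi;\calz(\lgamsig))$ with $\calz(\lgamsig)=\angles{\bft_4^{1/q}}\cong\bbz$ acted on through $\ahat=\det(A)$, the periodic-resolution computation giving $\bbz_p$ for the trivial action and $0$ for the sign action, the identification of the class with the central exponent $k_4$ of $\alpha^p$ taken modulo the norm subgroup, and the observation that the determinant-one elements of $D_4$ form the cyclic rotation subgroup so a non-cyclic $\Phi$ cannot have $\ahat=\bhat=+1$. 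I see no gaps; this matches how the paper actually deploys the proposition in the proof of Theorem \ref{ClassificationSolof-geometry}.
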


\begin{remark}
When $\Phi=\angles{A,B}$ is not cyclic, $\ahat=\bhat=+1$ never happens, so we can set
one of $a_4$, $b_4$ to zero. Thus, $H^2(\Phi; \calz(\tgamsig))$ is cyclic for all $\Phi$.
\end{remark}

\step[{\bf Detecting Torsion in $\Solof$-Crystallographic Groups}]

Given a lattice $\wt \Gamma_\cals$ of $\Solof$ (which projects to a lattice $\gamsig$ of $\Sol$), the short exact sequence
\[
1 \ra \calz(\wt \Gamma_\cals) \ra \wt \Gamma_\cals \ra \gamsig \ra 1
\]
induces an $S^1$-bundle over the solvmanifold $\gamsig \backslash \Sol$,
\[
S^1 \ra \wt \Gamma_\cals \backslash (\Solof) \ra \gamsig \backslash \Sol.
\]
The following two lemmas will be useful for determining when a $\Solof$-crystallographic group is torsion free.

\begin{lemma}
\label{torSolof}
Let $\wt \Gamma_\cals$ be a lattice of $\Solof$, projecting
to a standard lattice $\gamsig$ of $\Sol$, and suppose that
for $\alpha \in  \Solof \rx D_4$, the group
$\varPi = \angles{\wt \Gamma_\cals, \alpha}$ is crystallographic.
Let $\bar \alpha$ denote the projection of $\alpha$ to $\Sol \rx D_4$.
When the automorphism part of $\alpha$ 
acts as a reflection on the center of $\Solof$, $\varPi$ is torsion free if and only if 
$\angles{\gamsig,\bar \alpha} \subset \Sol \rx D_4$ is torsion free.
\end{lemma}
\begin{proof}
Evidently, if $\angles{\gamsig,\bar \alpha}$ is torsion free, then $\varPi$
must be torsion free. For the converse, suppose that $\angles{\gamsig,\bar \alpha}$
has torsion. In this case, the action of $\bar \alpha$ on the solvmanifold
$\gamsig \bs \Sol$ must fix a point. Observe that the action of $\alpha$
on the solvmanifold $\wt \Gamma_\cals \bs \Solof$ is $S^1$ fiber preserving.
Therefore, a circle fiber is left invariant under the action of $\alpha$. Since
$\alpha$ acts as reflection on the fiber, $\alpha$ must fix a point. Since the action of $\alpha$ 
fixes a point on $\wt \Gamma_\cals \bs \Solof$, the action of $\varPi$ fixes a point on $\Solof$. Thus,
$\varPi$ has torsion.
\qedhere
\end{proof}

\begin{lemma}
\label{EasyTor}
Let $\varPi$ be a crystallographic group of $\Solof$ with lattice $\wt \Gamma_\cals$.
If $\alpha =(\bft_1^{a_1}\bft_2^{a_2}\bft_3^{a_3}\bft_4^{a_4}, A) \in \varPi$ satisfies
$a_3 = \frac{1}{2}$ and $\abar=1$, then $\gamma \alpha$ is infinite order for any $\gamma \in
\wt \Gamma_\cals$.
\end{lemma}
\begin{proof}
Note that $A$ is necessarily of order 2. Let $\pr: \Solof \ra \bbr$ denote the
quotient homomorphism of $\Solof$ by its nil-radical $\Nil$. Write $\gamma \in \wt \Gamma_\cals$
as $\bft_1^{n_1}\bft_2^{n_2}\bft_3^{n_3}\bft_4^{n_4}$. Application of $\pr$ to $(\gamma \alpha)^2$
yields
\[
\pr(\gamma \alpha)^2 = 2 n_3+1,
\]
from which we infer $\gamma \alpha$ is of infinite order.
\qedhere
\end{proof}

We are now ready to give our main classification of $\Solof$-crystallographic groups.
Following Proposition \ref{solofShape}, a crystallographic group 
\[
\varPi \subset \Solof \rx D_4
\]
 of $\Solof$ is generated
by a lattice $\lgamsig = \angles{\bft_1,\bft_2,\bft_3, \bft_4^{\frac1q}}$ of $\Solof$, 
together with at most two generators of the form
\begin{align*}
&(\bft_1^{a_1} \bft_2^{a_2} \bft_3^{a_3} \bft_4^{a_4} , A),
(\bft_1^{b_1} \bft_2^{b_2} \bft_3^{b_3} \bft_4^{b_4} , B),
\end{align*}
where $A,B$ generate the holonomy group $\varPhi \subset D_4$. The $\Solof$-crystallographic group 
$\varPi$ projects to a $\Sol$-crystallographic group $Q$. 
We view $Q$ as an extension
\[1 \ra \bbz^2 \ra Q \ra \bbzphi \ra 1,\]
and Theorem \ref{abstract-kernel-theorem} classifies all possible $\bbzphi$ and abstract kernels $\varphi: \bbzphi \ra \gltz$.
We organize the $\Solof$-crystallographic groups according to which $\bbzphi$ and 
$\varphi: \bbzphi \ra \gltz$ in Theorem \ref{abstract-kernel-theorem} they project to. 
Theorem \ref{ClassificationSolof-geometry} also classifies $\Sol$-crystallographic groups, by projecting from $\Solof \rx D_4$ to $\Sol \rx D_4$.

\begin{theorem}[Classification of $\Solof$-Crystallographic Groups]
\label{ClassificationSolof-geometry}
The following is a complete list of
crystallographic groups $\varPi$ of $\Solof$, generated by a lattice $\lgamsig$ of $\Solof$, 
together with at most two generators of the form \begin{align*}
&(\bft_1^{a_1} \bft_2^{a_2} \bft_3^{a_3} \bft_4^{a_4} , A),
(\bft_1^{b_1} \bft_2^{b_2} \bft_3^{b_3} \bft_4^{b_4} , B).
\end{align*}

They are organized according to which 
$\bbzphi$ and $\varphi: \bbzphi \ra \gltz$ they project to (see Theorem \ref{abstract-kernel-theorem}). 
This determines the exponents $a_3, b_3$.

We find equations describing $H^1(\Phi;\coker(I-\cals))$, and thus classifying $\bfa=\xy{a_1}{a_2}$, $\bfb=\xy{b_1}{b_2}$. 
In general, $H^1(\Phi;\coker(I-\cals))$ depends on $\cals$.

By Proposition \ref{inflation} and Theorem \ref{seifert-const}, for sufficiently large $q$, 
an abstract kernel $\Phi \ra \out(\lgamsig)$ is induced, with vanishing obstruction to the existence of $\varPi$ in $H^3(\Phi; \calz(\lgamsig))$.
The exponents on $\bft_4$, $a_4$ and $b_4$, are classified by the group 
$H^2(\Phi; \calz(\lgamsig))$.

In all cases, except, $\Phi = \bbz_4$, we can take $c_3=0$ in the lattice $\lgamsig$ of $\varPi$ (Theorem \ref{abstract-kernel-theorem}).
In the $\bbz_4$ holonomy case, we have two different (up to isomorphism) choices for $c_3$.

Whenever the holonomy group contains an automorphism of $\Solof$ which is represented by an
off-diagonal matrix, the orbifold $\varPi \bs \Solof$ is non-orientable.
We give precise criterion for $\varPi$ to be torsion free. When $\varPi$ is torsion free, $\varPi \bs \Solof$ is an infra-solvmanifold of $\Solof$.

By projecting each $\Solof$-crystallographic group $\varPi$ to a crystallographic group $Q \subset\Sol \rx D_4$, 
we also obtain a classification of $\Sol$-crystallographic groups.
\end{theorem}

\begin{enumerate}
\item[]
\bigskip
\item[\gr{0}]
$\Phi=\text{trivial}$ \parn
\[ 
\varPi=\wt\Gamma_{(\cals;q,m_1,m_2)}.
\]\parn
$\bullet$ Torsion free
\\
\item[\gr{1}]
$\Phi=\bbz_2$: $A=
\left[\begin{matrix}
1 &0\\ 0& -1
\end{matrix}\right]$,\parn
$\bbzphi=\bbz=\angles{\bft_3,\bar\alpha=(\bft_3^{\frac12},A)}$.\parn
$\vara=-K$ with $\det(K)=-1$,  $\tr(K)=n>0$, and $\cals=n K+I$.\parn

\[
\varPi=\angles{\bft_1,\bft_2,\bft_3,\bft_4^{\frac{1}{q}},
\alpha =(\bft_3^{\frac12},A)}.
\]\parn
$\bullet$ $H^1(\Phi;\coker(I-\cals))$ is trivial so that $\bfa=\bzero$. \parn
$\bullet$ $H^2(\Phi; \calz(\tgamsig))$ is trivial.\parn
$\bullet$ Both $Q$ and $\varPi$ are torsion free.
\\
\item[\gr{2a}]
$\Phi=\bbz_2$: $A=
\left[\begin{matrix}
-1 &0\\ 0& -1
\end{matrix}\right]$,\parn
$\bbzphi=\bbz\x\bbz_2=\angles{\bft_3,\bar\alpha=(\bft_3^{0},A)}$.\parn
$\vara=A$, $\cals\in\sltz$ with $\tr(\cals)>2$.\parn
\[	
\varPi=\angles{\bft_1,\bft_2,\bft_3,\bft_4^{\frac{1}{q}},
\alpha =(\bft_1^{a_1} \bft_2^{a_2}\bft_4^{a_4},A)}.
\]
$\bullet$ $H^1(\Phi;\coker(I-\cals))$
 {$= \{\bfa \mid \bfa \in \coker(I-\cals)\}/
\{2\bfa \mid \bfa \in \coker(I-\cals)\} \subseteq \bbz_2 \x \bbz_2$} \parn
$\bullet$ $H^2(\Phi; \calz(\tgamsig)) = \bbz_2$.  There are two choices for $a_4$, the solutions of 
$\alpha^2 = \bft_4^{\frac {i} {q}}$ ($i=0,1$). \parn
$\bullet$
$Q$ has torsion, $\varPi$ is torsion free when $i=1$ and $q$ is even.
\\
\item[\gr{2b}]
$\Phi=\bbz_2$: $A=
\left[\begin{matrix}
-1 &0\\ 0& -1
\end{matrix}\right]$,\parn
$\bbzphi=\bbz=\angles{\bft_3,\bar\alpha=(\bft_3^{\frac12},A)}$.\parn
$\vara=-K$ with $\det(K)=+1$,  $\tr(K)=n>2$, and $\cals=n K-I$.\parn
\[
\varPi=\angles{\bft_1,\bft_2,\bft_3,\bft_4^{\frac{1}{q}},
\alpha =(\bft_3^{\frac12}\bft_4^{a_4},A)}.
\]
$\bullet$ $H^1(\Phi;\coker(I-\cals))$ is trivial so that $\bfa=\bzero$. \parn
$\bullet$ $H^2(\Phi; \calz(\tgamsig))=\bbz_2$, $a_4=0$ or $\frac{1}{2q}$.\parn
$\bullet$ Both $Q$ and $\varPi$ are torsion free.
\\
\item[\gr{3}]
$\Phi=\bbz_2$: $A=
\left[\begin{matrix}
0 &1\\ 1& 0
\end{matrix}\right]$,\parn
$\bbzphi=\bbz\rx\bbz_2=\angles{\bft_3,\bar\alpha=(\bft_3^{0},A)}$.\parn
$\vara=A$,
$\cals\in\sltz$ with $\tr(\cals)>2$ and $\sigma_{12}=-\sigma_{21}$. \parn
\[
\varPi=\angles{\bft_1,\bft_2,\bft_3,\bft_4^{\frac{1}{q}},
\alpha =(\bft_1^{a_1} \bft_2^{a_2},A)}.
\]\parn
$\bullet$ $H^1(\Phi;\coker(I-\cals)) = \{ \bfa \mid \bfa \in
\coker(I-\cals), a_2\equiv -a_1\}/\\ 
\left\{{\small{{\xy{v_1-v_2}{v_2-v_1}}}} \mid \bfv \in \coker(I- \cals)
\right\} \subseteq \bbz_2$.\parn
$\bullet$ $H^2(\Phi; \calz(\tgamsig))$ is trivial.\parn
$\bullet$
Both $Q$ and $\varPi$ have torsion.
\\
\item[\gr{3\ii}]
$\Phi=\bbz_2$: $A=
\left[\begin{matrix}
0 &1\\ 1& 0
\end{matrix}\right]$,\parn
$\bbzphi=\bbz\rx\bbz_2=\angles{\bft_3,\bar\alpha=(\bft_3^{0},A)}$.\parn
$\vara=\left[\begin{matrix}
1&0\\ 0&-1
\end{matrix}\right]$,
$\cals\in\sltz$
with $\tr(\cals)>2$ and  $\sigma_{11}=\sigma_{22}$.\parn
\[
\varPi=\angles{\bft_1,\bft_2,\bft_3,\bft_4^{\frac{1}{q}},
\alpha =(\bft_1^{a_1} \bft_2^{a_2},A)}.
\]\parn
$\bullet$ $H^1(\Phi;\coker(I-\cals)) = \{ \bfa \mid \bfa \in \coker(I-
\cals), 2 a_1\equiv 0\}/ \\ \left\{ \xy{0}{2v_2} \mid \bfv \in \coker(I-
\cals) \right\} \subseteq \bbz_2 \x \bbz_2$.\parn
$\bullet$ $H^2(\Phi; \calz(\tgamsig))$ is trivial.\parn
$\bullet$
Both $Q$ and $\varPi$ are torsion free if and only if $a_1\equiv\tfrac12$
and $a_2\not\equiv \frac{(\sigma_{11}+1)(2 n+1)}{2\sigma_{12}}$
for any $n\in\bbz$.
\\
\par\noindent
\item[\gr{4}]
$\Phi=\bbz_4$: $A=
\left[\begin{matrix}
0 &1\\ -1& 0
\end{matrix}\right]$,\parn
$\bbzphi=\bbz\rx\bbz_4=\angles{\bft_3,\bar\alpha=(\bft_3^{0},A)}$.\parn
$\vara=A$,
$\cals\in\sltz$ with $\tr(\cals)>2$ and symmetric.\parn
\[
\varPi=\angles{\bft_1,\bft_2,\bft_3,\bft_4^{\frac{1}{q}},
\alpha =(\bft_1^{a_1} \bft_2^{a_2}\bft_4^{a_4},A)}.
\]\parn
$\bullet$
There are two choices for $c_3$ in $\bft_3$. They are solutions of
$d=0$ or $d=\tfrac1q$ for $c_3$, where
$\alpha\bft_3 \alpha\inv
=\bft_1^{(1-\sigma_{22})a_1+\sigma_{12}a_2}\bft_2^{
\sigma_{21}a_1+(1-\sigma_{11})a_2}\bft_3^{-1}\bft_4^{d}$. Each corresponds to 
a distinct abstract kernel $\Phi \ra \out(\tgamsig)$.\parn
$\bullet$ $H^1(\Phi;\coker(I-\cals)) = \{ \bfa \mid \bfa \in \coker(I-\cals)\}/ 
\{ (I-A)\bfa \mid \bfa \in \coker(I-\cals) \} \subseteq \bbz_2$.\parn
$\bullet$
$H^2(\Phi; \calz(\tgamsig)) = \bbz_4$. There are 4 choices for $a_4$, the solutions of $\alpha^4=\bft_4^{\frac{i}{q}}$,
$\quad (i=0,1,2,3)$.\parn
$\bullet$
$Q$ has torsion, $\varPi$ is torsion free precisely when $i=1,3$ and $q$ is even.
\\
\item[\gr{5}]
$\Phi=\bbz_2\x\bbz_2$:
$A=
\left[\begin{matrix}
1 &0\\ 0& -1
\end{matrix}\right]$,
$B=
\left[\begin{matrix}
-1 &0\\ 0& -1
\end{matrix}\right]$,\parn
$\bbzphi=\bbz\x\bbz_2=\angles{\bft_3,\bar\alpha=(\bft_3^{\frac12},A),
\bar\beta=(\bft_3^0,B)}$.\parn
$\vara=-K$, $\varb=B$
\hfill \gr{1}$+$\gr{2a}\parn
$\cals=n K+I$, $K\in\gltz$, $\det(K)=-1$, and  $\tr(K)=n>0$.\parn
\[
\varPi=\angles{\bft_1,\bft_2,\bft_3,\bft_4^{\frac{1}{q}},
\alpha=(\bft_3^{\frac12},A), 
\beta=(\bft_1^{b_1} \bft_2^{b_2} \bft_4^{b_4},B)}.
\]\parn
$\bullet$ $H^1(\Phi;\coker(I-\cals)) = \{ \bfb \mid \bfb \in \coker(I+K)\}/ 
\{ 2\bfb \mid  \bfb \in \coker(I+K)\} \subseteq \bbz_2 \x \bbz_2$.\parn
$\bullet$ $H^2(\Phi; \calz(\tgamsig))=\bbz_2$.
There are two choices for $b_4$, the solutions of
$\beta^2=\bft_4^{\frac{i}{q}}, (i=0,1)$.\parn
$\bullet$
$Q$ has torsion, $\varPi$ is torsion free precisely when $i=1$ and $q$ is even.
\\
\item[\gr{6a}]
$\Phi=\bbz_2\x\bbz_2$:
$A=
\left[\begin{matrix}
0 &1\\ 1& 0
\end{matrix}\right]$,
$B=
\left[\begin{matrix}
-1 &0\\ 0& -1
\end{matrix}\right]$,\parn
$\bbzphi=(\bbz\x\bbz_2)\rx\bbz_2=\angles{\bft_3,\bar\alpha=(\bft_3^{0},A),
\bar\beta=(\bft_3^0,B)}$.\parn
$\vara=A$, $\varb=B$ \hfill \gr{3}$+$\gr{2a}\parn
$\cals\in\sltz$ with $\tr(\cals)>2$ and $\sigma_{12}=-\sigma_{21}$. \parn
\[	
\varPi=\angles{\bft_1,\bft_2,\bft_3,\bft_4^{\frac{1}{q}},
\alpha=(\bft_1^{a_1} \bft_2^{a_2},A),
\beta=(\bft_1^{b_1} \bft_2^{b_2}\bft_4^{b_4},B)}.
\]\parn
$\bullet$ $H^1(\Phi;\coker(I-\cals)) = 
\{ (\bfa,\bfb) \mid \bfa, \bfb \in \coker(I-\cals), a_2\equiv -a_1, b_1-b_2-2 a_1 \equiv 0\}/
\left\{ \left({\scriptstyle{\xy{v_1-v_2}{v_2-v_1}}}, 2\bfv\right)   \mid  \bfv \in \coker(I-\cals) \right\}$.\parn
$\bullet$ $H^2(\Phi; \calz(\tgamsig))=\bbz_2$.
There are two choices for $b_4$, the solutions of
$\beta^2=\bft_4^{\frac{i}{q}}, (i=0,1)$.\parn
$\bullet$
Both $Q$ and $\varPi$ have torsion.
\\
\item[\gr{6a\ii}]
$\Phi=\bbz_2\x\bbz_2$:
$A=
\left[\begin{matrix}
0 &1\\ 1& 0
\end{matrix}\right]$,
$B=\def\tr{\text{\rm{tr}}}
\left[\begin{matrix}
-1 &0\\ 0& -1
\end{matrix}\right]$,\parn
$\bbzphi=(\bbz\x\bbz_2)\rx\bbz_2=\angles{\bft_3,\bar\alpha=(\bft_3^{0},A),
\bar\beta=(\bft_3^0,B)}$.\parn
$\vara=
\left[\begin{matrix}
1 &0\\ 0& -1
\end{matrix}\right]$,\
$\varb=B$
\hfill \gr{3\ii}$+$\gr{2a}\parn
$\cals\in\sltz$
with $\tr(\cals)>2$ and $\sigma_{11}=\sigma_{22}$.\parn
\[	
\varPi=\angles{\bft_1,\bft_2,\bft_3,\bft_4^{\frac{1}{q}},
\alpha=(\bft_1^{a_1} \bft_2^{a_2},A),
\beta=(\bft_1^{b_1} \bft_2^{b_2}\bft_4^{b_4},B)}.
\]\parn
$\bullet$ $H^1(\Phi;\coker(I-\cals)) = \{ (\bfa,\bfb) \mid \bfa, \bfb \in \coker(I-\cals), 2 a_1\equiv 0, 2b_2-2a_2 \equiv 0\}/ 
\left\{ \left({\xy{0}{2v_2}}, 2\bfv\right)   \mid  \bfv \in \coker(I-\cals) \right\}$.\parn
$\bullet$ $H^2(\Phi; \calz(\tgamsig))=\bbz_2$.
There are two choices for $b_4$, the solutions of
$\beta^2=\bft_4^{\frac{i}{q}}, (i=0,1)$.\parn
$\bullet$
$Q$ has torsion, 	$\varPi$ is torsion free if and only if
$i=1$, $q$ is even, and
$a_1\equiv\tfrac12$, $a_2\equiv b_2 + \frac 12$,
$b_1\not\equiv \frac {\sigma_{12}(2 n+1)}{2(\sigma_{11}-1)} + \frac 1 2$,
$b_2\not\equiv \frac{(\sigma_{11}+1)(2 m+1)}{2\sigma_{12}} + \frac 1 2$ for any $m,n \in \bbz$.
\\
\item[\gr{6b}]
$\Phi=\bbz_2\x\bbz_2$:
$A=
\left[\begin{matrix}
0 &1\\ 1& 0
\end{matrix}\right]$,
$B=
\left[\begin{matrix}
-1 &0\\ 0& -1
\end{matrix}\right]$,\parn
$\bbzphi=\bbz\rx\bbz_2=\angles{\bft_3,\bar\alpha=(\bft_3^{0},A),
\bar\beta=(\bft_3^{\frac12},B)}$.\parn
$\vara=A$, $\varb=-K$ \hfill \gr{3}$+$\gr{2b}\parn
$\cals=n K-I$, where $K\in\sltz$ with $\tr(K)= n>2$; $k_{12}=-k_{21}$.\parn
\[
\varPi=\angles{\bft_1,\bft_2,\bft_3,\bft_4^{\frac{1}{q}},
\alpha=(\bft_1^{a_1} \bft_2^{a_2},A),
\beta=(\bft_3^{\frac12}\bft_4^{b_4},B)}.
\]\parn
$\bullet$ $H^1(\Phi;\coker(I-\cals)) = \{ \bfa \mid \bfa \in \coker(I+K), a_2\equiv -a_1\}/ \\ 
\left\{ {\scriptstyle{\xy{v_1-v_2}{v_2-v_1}}} \mid \bfv \in \coker(I+K) \right\} \subseteq \bbz_2$.\parn
$\bullet$ $H^2(\Phi; \calz(\tgamsig))=\bbz_2$.
There are two choices for $b_4$, the solutions of
$\beta^2=\bft_3 \bft_4^{\frac{i}{q}}, (i=0,1)$.\parn
$\bullet$
Both $Q$ and $\varPi$ have torsion.
\\
\item[\gr{6b\ii}]
$\Phi=\bbz_2\x\bbz_2$:
$A=
\left[\begin{matrix}
0 &1\\ 1& 0
\end{matrix}\right]$,
$B=
\left[\begin{matrix}
-1 &0\\ 0& -1
\end{matrix}\right]$,\parn
$\bbzphi=\bbz\rx\bbz_2=\angles{\bft_3,\bar\alpha=(\bft_3^{0},A),
\bar\beta=(\bft_3^{\frac12},B)}$.\parn
$\vara=\left[\begin{matrix}
1 &0\\ 0& -1
\end{matrix}\right]$, $\varb=-K$ \hfill \gr{3\ii}$+$\gr{2b}\parn
$\cals=n K-I$, where $K\in\sltz$ with $\tr(K)= n>2$;
$k_{11}=k_{22}$.\parn
\[
\varPi=\angles{\bft_1,\bft_2,\bft_3,\bft_4^{\frac{1}{q}},
\alpha=(\bft_1^{a_1} \bft_2^{a_2},A),
\beta=(\bft_3^{\frac12}\bft_4^{b_4},B)}.
\]\parn
$\bullet$ $H^1(\Phi;\coker(I-\cals)) = \{ \bfa \mid \bfa \in \coker(I+K), 2 a_1\equiv 0\}/ \\
\left\{ \xy{0}{2v_2} \mid \bfv \in \coker(I+K) \right\} \subseteq \bbz_2 \x \bbz_2$.\parn
$\bullet$
$H^2(\Phi; \calz(\tgamsig))=\bbz_2$.
There are two choices for $b_4$, the solutions of
$\beta^2=\bft_3\bft_4^{\frac{i}{q}}, (i=0,1)$.\parn
$\bullet$
Both $Q$ and $\varPi$ are torsion free if and only if
$a_1=\tfrac12$
and $a_2\not\equiv \frac{(k_{11}-1)(2 n+1)}{2k_{12}}$ for any $n \in \bbz$.
\\
\item[\gr{7}]
$\Phi=\bbz_4\rx\bbz_2$:
$A=\matNpp, B=\matPpm,$\parn
$\bbzphi=(\bbz\x\bbz_2)\rx\bbz_2=\angles{\bft_3,\bar\alpha=(\bft_3^{0},A),
\bar\beta=(\bft_3^{\frac12},B)}$.\parn
$\vara=A$, $\varb=-K$ \hfill (includes \gr{6a})\qquad  \gr{3}$+$\gr{1}\parn
$\cals=n K+I$, $K\in\gltz$, $\det(K)=-1$, $\tr(K)>0$;
$k_{12}=-k_{21}$.\parn
\[	
\varPi=\angles{\bft_1,\bft_2,\bft_3,\bft_4^{\frac{1}{q}},
\alpha=(\bft_1^{a_1} \bft_2^{a_2},A),
\beta=(\bft_3^{\frac12}\bft_4^{b_4},B)}.
\]\parn
$\bullet$ $H^1(\Phi;\coker(I-\cals)) = \{ \bfa \mid \bfa \in \coker(I-\cals), a_2\equiv -a_1\}/ \\
\left\{ {\scriptstyle{\xy{v_1-v_2}{v_2-v_1}}} \mid \bfv \in \coker(I+K) \right\}$.\parn
$\bullet$
$H^2(\Phi; \calz(\tgamsig))=\bbz_4$.
There are $4$ choices for $b_4$, the solutions of
$(\beta\alpha)^4=\bft_4^{\frac{j}{q}}, (j=0,1,2,3)$.\parn
$\bullet$
Both $Q$ and $\varPi$ have torsion.
\\	
\item[\gr{7\ii}]
$\Phi=\bbz_4\rx\bbz_2$:
$A=\matNpp, B=\matPpm$,\parn
$\bbzphi=(\bbz\x\bbz_2)\rx\bbz_2=\angles{\bft_3,\bar\alpha=(\bft_3^{0},A),
\bar\beta=(\bft_3^{\frac12},B)}$.\parn
$\vara=\matPpm$, $\varb =-K$
\hfill (includes \gr{6a\ii})\qquad \gr{3\ii}$+$\gr{1}\parn
$\cals=n K+I$, $K\in\gltz$, $\det(K)=-1$,  $\tr(K)=n>0$, $k_{11}=k_{22}$.\parn
\[
\varPi=\angles{\bft_1,\bft_2,\bft_3,\bft_4^{\frac{1}{q}},
\alpha=(\bft_1^{a_1} \bft_2^{a_2},A),
\beta=(\bft_3^{\frac12}\bft_4^{b_4},B)}.
\]\parn
$\bullet$ $H^1(\Phi;\coker(I-\cals)) = \{ \bfa \mid \bfa \in \coker(I-\cals), 2 a_1\equiv 0\}/ \\
\left\{ \xy{0}{2v_2} \mid \bfv \in \coker(I+K) \right\}$.\parn
$\bullet$
$H^2(\Phi; \calz(\tgamsig))=\bbz_4$.
There are $4$ choices for $b_4$, the solutions of
$(\beta\alpha)^4=\bft_4^{\frac{j}{q}}, (j=0,1,2,3)$.\parn
$\bullet$
$Q$ has torsion, $\varPi$ is torsion free if and only if $j=1,3$, $q$ is even, and
$a_1 = \frac 12$ and $a_2= -\frac {k_{21}+1}{2k_{11}} + \frac i {k_{11}}$ for $i=0, \ldots, k_{11}-1$.
\par\noindent
\end{enumerate}
\bigskip

\begin{proof}
	
Consider the descriptions of
\[
H^1(\Phi; \coker(I-\cals)) \cong Z^1(\Phi; \coker(I-\cals))/B^1(\Phi; \coker(I-\cals)) 
\] 
in Remark \ref{cocycleConditions}. 
In our computations below, we use that the condition  
\[
\bfa=\xy{a_1}{a_2} \in \coker(I-S) = (I-S)\inv \bbz^2 / \bbz^2
\]
is equivalent to
$(I-\cals) \bfa \equiv \bfzero \mbox{ mod } \bbz^2$.
 
In cases \gr{2a}, \gr{2b} and \gr{4}, $\Phi=\bbz_p$, $p=2$ or $4$.
Since $\det(A)=+1$, $\alpha^p$ has $\bft_4$ component ${\bft_4}^{p\cdot
a_4+\ell}$, where $\ell$ is independent of $a_4$. We then have $p$ choices for $a_4$ (modulo $\frac1q \bbz$). 
Namely, the solutions of 
\[
p\cdot a_4+\ell=\tfrac{1}{q}, \ldots, \tfrac{p-1}{q},
\]
each corresponding to a different class in $H^2(\Phi; \calz(\tgamsig))$. In fact, the number $\ell$ is always a rational number, 
and hence so is
$a_4$ (or $b_4$).
The remaining cases when $\Phi=\bbz_2\x\bbz_2$ or $D_4$ are similar. We set one of exponents on
$\bft_4$ by Lemma \ref{conj-in-t4}, and apply the above technique to find the remaining exponent on $\bft_4$.

\bigskip
\noindent
\gr{0}
See Theorem \ref{lattice-part-construction}.
\bigskip
	
\noindent
\gr{1} 
Corollary \ref{a-conj-zero3} shows $H^1(\Phi;\coker(I-\cals))$ is trivial,
and thus we can take $a_1=a_2=0$. Since $\ahat=\det(A)=-1$, Lemma \ref{conj-in-t4} implies
$a_4$ can be conjugated to zero.
So, $\varPi=\angles{\bft_1,\bft_2,\bft_3,\bft_4,
\alpha =(\bft_3^{\frac12},A)}$. By Lemma \ref{EasyTor}, both $\varPi$ and $Q$ are torsion free.

\bigskip
	
\noindent
\gr{2a}
In this case $\varphi(\alphabar)=-I$.
Now $\bfa$ must satisfy $(I-\cals)\bfa\equiv \bfzero$
taken modulo $(I-\varphi(\bar\alpha))\bfa=2\bfa$,
since the cocycle condition in Remark \ref{cocycleConditions}, $(I+\varphi(\bar\alpha))\bfa=\bfzero\in\bbz^2$, 
is satisfied independently of $\bfa$. Note that all elements of 
$H^1(\Phi; \coker(I-S))$ are of order 2, and is generated by at most 2 elements. Therefore,
$H^1(\Phi; \coker(I-S))$ is isomorphic to a subgroup of $\bbz_2 \x \bbz_2$.

There are two choices for $a_4$, the solutions of
$\alpha^2=\bft_4^{\frac{i}{q}}, (i=0,1)$. Indeed, $\alpha^2$ projects to the identity on $\Sol$.
Therefore, $\varPi$ is torsion free only when $i=1$ and $q$ is even (see classification of crystallographic groups
of $\Nil$, case 2, p.160, \cite{Dekimpe}), and $Q$ always has torsion.
\bigskip
	
\noindent
\gr{2b}
By Corollary \ref{a-conj-zero3}, we can take $a_1=a_2=0$ so that
$\alpha=(\bft_3^{\frac12}\bft_4^{a_4},A)$. Then
$\alpha^2=\bft_3\bft_4^{2 a_4}$.
Therefore, $a_4=0$ or $\frac{1}{2q}$.
By Lemma \ref{EasyTor}, both $\varPi$ and $Q$ are torsion free.
\bigskip
	
\noindent
\gr{3}
From Remark \ref{cocycleConditions},
$\bfa$ must satisfy $(I-\cals)\bfa\equiv\bfzero$, and
\[
(I+\vara)\bfa=
\left[\begin{matrix}
1&1\\ 1&1
\end{matrix}\right] \bfa\equiv\bfzero
\text{ modulo }
(I-\vara)\bfv=
\left[\begin{matrix}
1&-1\\ -1&1
\end{matrix}\right] \bfv
\text{, for }
(I-\cals)\bfv \equiv \bfzero.
\]
Computing, we obtain $a_2 \equiv -a_1$, modulo $\xy{v_1-v_2}{v_2-v_1}$. Applying the coboundary operator
to the cocycles yields:
\[
(I-\vara) \xy{a_1}{-a_1} = \xy{2a_1}{-2a_1},
\]
which implies that $\bfa$ has order at most 2 and so $H^1(\Phi;\coker(I-\cals))$ is either $\bbz_2$ or trivial, 
depending on $\coker(I-\cals)$.
By Lemma \ref{conj-in-t4}, we may assume $a_4=0$, equivalently,
$H^2(\Phi; \calz(\tgamsig))$ vanishes, so that $\alpha=(\bft_1^{a_1} \bft_2^{a_2} \bft_4^{a_4},A)$.

Direct computation shows that the projection of $\varPi$ to a $\Sol$-crystallographic group, $Q$, 
always has torsion. Note that $a_2 \equiv -a_1$, and
\begin{align*}
\alpha^2
&=(\bft_1^{a_1}\bft_2^{-a_1},A)^2
=(\bft_1^{a_1}\bft_2^{-a_1}\cdot
A(\bft_1^{a_1}\bft_2^{-a_1}),I) \\
&=(\bft_1^{a_1}\bft_2^{-a_1}\cdot \bft_2^{a_1}\bft_1^{-a_1},I)\\
&=(e,I).
\end{align*}

On $\Solof$, $\ahat=-1$, so $A$ acts as reflection on $\calz(\Solof)$. Lemma \ref{torSolof} 
applies to show that $\varPi$ always has torsion.
\bigskip
	
\noindent
\gr{3\ii}
From Remark \ref{cocycleConditions},
$\bfa$ must satisfy $(I-\cals)\bfa\equiv\bfzero$,
\[
(I+\vara)\bfa=
\left[\begin{matrix}
2&0\\ 0&0
\end{matrix}\right] \bfa\equiv\bfzero
\text{ modulo }
(I-\vara)\bfv=
\left[\begin{matrix}
0&0\\ 0&2
\end{matrix}\right] \bfv
\text{, for }
(I-\cals)\bfv\equiv\bfzero,
\]
that is, $2 a_1 \equiv \bfzero$ (so $a_1 \equiv 0$ or $\tfrac12$), modulo $\xy{0}{2v_2}$. 
This implies that $H^1(\Phi; \coker(I-\cals))$ is isomorphic to a subgroup of  $\bbz_2 \x \bbz_2$.
By Lemma \ref{conj-in-t4}, we may assume $a_4=0$, that is, $H^2(\Phi; \calz(\tgamsig))$ vanishes.
Therefore, $\alpha=(\bft_1^{a_1} \bft_2^{a_2},A)$.

Lemma \ref{torSolof} applies to show that $\varPi$ is torsion free precisely when the $\Sol$-crystallographic group $Q$ is torsion free,
which is equivalent to the action of $Q$ on $\Sol$ having no fixed points. By Lemma \ref{geometricLemma}, $Q \bs \Sol$ is $T^2 \x I$ with
$T^2 \x \{0 \}$ identified to itself by the affine involution of $T^2$ $\left( \xy{a_1} {a_2}, \matPpm \right)$ and $T^2 \x \{1 \}$ 
identified to itself by the affine involution
$\left( \xy{a_1} {a_2}, \left[ \begin{matrix} \sigma_{11} & -\sigma_{12}\\
\sigma_{21} & -\sigma_{11} \end{matrix} \right] \right)$. Both of these involutions act freely on the torus precisely when $a_1\equiv\tfrac12$
and $a_2\not\equiv \frac{(\sigma_{11}+1)(2 n+1)}{2\sigma_{12}}$
for any $n\in\bbz$.
\bigskip	

\noindent
\gr{4}
This is the only case where a non-standard lattice is present, that is
$c_3 \neq 0$.

By Remark \ref{cocycleConditions}, $\bfa$ must satisfy
$(I-\cals)\bfa \equiv \bfzero$, taken modulo
$\im(I-\varphi(\alphabar))$. Note that $\det(I-\varphi(\alphabar))=\det\left(\left[\begin{matrix}
1&-1\\ 1&1
\end{matrix}\right]\right)=2$, which implies
that $H^1(\Phi; \coker(I-S))$ is either $\bbz_2$ or the trivial group.

We compute that

\begin{align*}
\alpha\bft_3 \alpha\inv
&=\bft_1^{(1-\sigma_{22})a_1+\sigma_{12}a_2}\bft_2^{
\sigma_{21}a_1+(1-\sigma_{11})a_2}\bft_3^{-1}\bft_4^{u_4+2 c_3}.
\end{align*}
By Proposition \ref{relations}, $u_4$ must be rational. We have two choices for $c_3$ 
(modulo $\tfrac1q \bbz$, as $\bft_4^{\frac1q}$ is a generator of the lattice),
$ c_3 = -\tfrac{u_4}{2}, -\tfrac{u_4}{2} + \tfrac{1}{2q}, $
so that $u_4+2 c_3=0$ or $1$. Unless $c_3$ is a multiple of $\tfrac1q$, the corresponding lattice is non-standard.

For $a_4$, we have
\[
\alpha^4=\bft_4^{4a_4-(a_1-a_2)^2+v_4},
\]
Then there are 4 choices for $a_4$,
$a_4=\frac{(a_1-a_2)^2-v_4+i}{4q}, (i=0,1,2,3)$.
These are the solutions of
$
\alpha^4=\bft_4^{\frac{i}{q}}, \quad (i=0,1,2,3).
$

From this, $Q$ must always have torsion. For $i=0, 2$, $\varPi$ has torsion. To see this when $i=2$, note that
\[ 
 (\bft_4^{-\frac{1}{q}} \alpha^2)^2 = \bft_4^{-\frac{2}{q}}\bft_4^{\frac{2}{q}}=e.
\]
For $i=1,3$ and $q$ even (see classification of crystallographic groups
of $\Nil$, case 10, p.163, \cite{Dekimpe}), $\varPi$ is torsion free.

\bigskip
	
\noindent
\gr{5}
By Corollary \ref{a-conj-zero3}, we take $a_1=a_2=0$.
We need $\bfb$ to satisfy $(I-\cals)\bfb \equiv \bfzero$.
Then the cocycle conditions for
$\bbz_2 \x \bbz_2$ in Remark \ref{cocycleConditions}
show that we must have $(I - \varphi(\alphabar))\bfb$ = $(I+K)\bfb \equiv \bfzero$.
In fact, since $(I-\cals) = (I-K)(I+K)$,
this condition implies $(I-\cals)\bfb \equiv \bfzero$. Since
we have already fixed $a_1=a_2=0$, for the coboundary in Remark \ref{cocycleConditions}, we take $\bfb$ modulo
$(I - \varphi(\betabar))\bfv = 2 \bfv$ only when
$\bfv$ satisfies $(I - \varphi(\alphabar))\bfv$ = $(I+K)\bfv \equiv \bfzero$.

Since $\det(A)=-1$, we may assume $a_4=0$ by Lemma \ref{conj-in-t4}.
There are two choices for $b_4$, the solutions of
$\beta^2=\bft_4^{\frac{i}{q}}, (i=0,1)$, just like in case \gr{2a}. That is, $H^2(\Phi, \calz(\tgamsig)) = \bbz_2$. 
Indeed, $\beta$ has order 2 when projected to $\Sol \rx D_4$, and hence $Q$ always has torsion.

Note that $\gamma \alpha$ and $\gamma \alpha \beta$ are of infinite order for all $\gamma \in \wt \Gamma_\cals$ by Lemma \ref{EasyTor}.
Like case \gr{2a}, $\varPi$ is torsion free precisely when
$\beta^2=\bft_4^{\frac{1}{q}}$ and $q$ is even.
\bigskip
	
\noindent
\gr{6a} This is a combination of cases \gr{3}+\gr{2a}.

We have $(I-\cals)\bfa\equiv \bfzero$ and $(I-\cals)\bfb\equiv \bfzero$.
Also, $\bfa$ and $\bfb$ must satisfy the cocycle conditions
in Remark \ref{cocycleConditions}. Note that $(I+\varphi(\alphabar))\bfa \equiv \bfzero$
forces $a_2\equiv -a_1$, whereas 
\[
(I-\varphi(\alphabar)) \bfb-(I-\varphi(\betabar)) \bfa \equiv \bfzero
\] 
forces
$b_1-b_2-2 a_1 \equiv 0$, $-b_1 + b_2 - 2 a_2 \equiv 0$. Since $a_2\equiv -a_1$, the second equation is redundant. 
We take $\bfa$ and $\bfb$
modulo $(I-\varphi(\alphabar)) \bfv$ and $(I-\varphi(\betabar))\bfv$, respectively, where $(I-\cals)\bfv \equiv \bfzero$. 
By Lemma \ref{conj-in-t4}, we may assume $a_4=0$. There are two choices for $b_4$, the solutions of
$\beta^2=\bft_4^{\frac{i}{q}}, (i=0,1)$. That is, $H^2(\Phi, \calz(\tgamsig)) = \bbz_2$.

As $\varPi$ contains a subgroup of type \gr{3}, both $Q$ and $\varPi$ always have torsion.

\bigskip

\noindent
\gr{6a\ii} Similar to case \gr{6a}, this is a combination of  \gr{3\ii}+\gr{2a}.
The description of $H^1(\Phi, \coker(I-\cals))$ follows just like
in case \gr{6a}.

There are two choices for $b_4$, the solutions of
$\beta^2=\bft_4^{\frac{i}{q}}, (i=0,1)$. That is, $H^2(\Phi, \calz(\tgamsig)) = \bbz_2$. 
Since $\beta^2$ projects to the identity on $\Sol$, $Q$ always has torsion.

For $\varPi$ to be torsion free,
the subgroups
$\angles{\tgamsig,\alpha}$,
$\angles{\tgamsig,\beta}$, and
$\angles{\tgamsig,\alpha \beta}$, where
\[
\alpha \beta = \left(\bft_1^{a_1 + b_1} \bft_2^{a_2 - b_2} \bft_4^{b'_4}, \left[ \begin{matrix}0 & -1 \\ -1 & 0 \end{matrix} \right] \right),
\]
must all be torsion free.
\noindent
The group $\angles{\tgamsig,\beta}$ is torsion free precisely when $b_4$ satisfies
$\beta^2=\bft_4^{\frac{1}{q}}$ and $q$ is even.
	
By Lemma \ref{torSolof}, $\angles{\tgamsig,\alpha}$ and 	$\angles{\tgamsig,\alpha \beta}$ are torsion free precisely when
their projections to $\Sol$, $\angles{\gamsig, (\bft_1^{a_1} \bft_2^{a_2},A)}$ and
 $\angles{ \gamsig, (\bft_1^{a_1+b_1} \bft_2^{a_2-b_2},AB)}$ are torsion free.
Similar to case \gr{3\ii}, by computing when
the appropriate affine involutions on $T^2$ in Lemma \ref{geometricLemma}
have no fixed points, we obtain the conditions
$a_1=\frac 12$, $a_2 = b_2 + \frac 12$,
$b_1\not\equiv \frac {\sigma_{12}(2 n+1)}{2(\sigma_{11}-1)} + \frac 1 2$
$b_2\not\equiv \frac{(\sigma_{11}+1)(2 m+1)}{2\sigma_{12}} + \frac 1 2$ for any $m,n \in \bbz$.
\bigskip
	
\noindent
\item[\gr{6b}] This is a combination of  \gr{3}+\gr{2b}.

By Corollary \ref{a-conj-zero3}, we can take $b_1=b_2=0$.
The cocycle conditions in Remark \ref{cocycleConditions} force
$(I + \varphi(\alphabar)) \bfa \equiv \bfzero$ as well as
$(I - \varphi(\betabar)) \bfa = (I+K)\bfa \equiv \bfzero$, so that
$\bfa \in \coker(I+K)$.
Since $b_1, b_2 =0$ is fixed, we can take $\bfa$ modulo $(I -\varphi(\alphabar))\bfv$ only when
$(I-\varphi(\betabar))\bfv = (I+K)\bfv \equiv \bfzero$, that is, only for $\bfv \in \coker(I+K)$. 

Note that we can take $a_4=0$ by Lemma \ref{conj-in-t4}, and there are two choices for $b_4$, the solutions of
$\beta^2=\bft_3\bft_4^{\frac{i}{q}}, (i=0,1)$. Hence $H^2(\Phi, \calz(\tgamsig)) = \bbz_2$. 
Both $Q$ and $\varPi$ always have torsion, as they contain a subgroup of type \gr{3}.

\bigskip

\noindent
\item[\gr{6b\ii}] This is a combination of  \gr{3\ii}+\gr{2b}

By Corollary \ref{a-conj-zero3}, we can take $b_1=b_2=0$.
The computation of $H^1(\Phi;\coker(I-\cals))$
is identical to that of \gr{6b}. In this case, we use
$\varphi(\alphabar) = \matPpm$ rather than $\varphi = A$.
Note that we take $a_4=0$ by Lemma \ref{conj-in-t4}, and
there are two choices for $b_4$, the solutions of
$\beta^2=\bft_3\bft_4^{\frac{i}{q}}, (i=0,1)$. Thus $H^2(\Phi, \calz(\tgamsig)) = \bbz_2$. 

By Lemma \ref{torSolof}, $\varPi$ is torsion free precisely when the $\Sol$-crystallographic group $Q$ is torsion free, 
which is equivalent to $Q$ acting freely on $\Sol$. By Lemma \ref{geometricLemma}, $Q \bs \Sol$ is $T^2 \x I$ with
$T^2 \x \{0 \}$ identified to itself by the affine involution of $T^2$
$\left( \xy{a_1} {a_2}, \matPpm \right)$, and $T^2 \x \{1 \}$ identified to itself by the affine involution 
$\left( \xy{a_1} {a_2}, \left[ \begin{matrix} -k_{11} & k_{12}\\
-k_{21} & k_{11} \end{matrix} \right] \right)$. Both of these involutions act freely on the torus precisely when $a_1=\tfrac12$
	and $a_2\not\equiv \frac{(k_{11}-1)(2 n+1)}{2k_{12}}$ for any $n \in \bbz$.

\bigskip
	
\noindent
\item[\gr{7}] This is a combination \gr{3}+\gr{1}. which includes \gr{6a}.

By Corollary \ref{a-conj-zero3}, we can take $b_1=b_2=0$.
For $(I-\cals)\bfa \equiv \bfzero$, the only cocycle
condition that $\bfa$ must satisfy is $(I+\varphi(\alphabar))\bfa \equiv \bfzero$, which
forces $a_2\equiv -a_1$.
\noindent
However, we have fixed $b_1=b_2=0$. Therefore, when computing the coboundaries,
we can take $\bfa$ modulo $(I-\varphi(\alphabar))\bfv$ only for
$\bfv$ that satisfies
$(I-\varphi(\betabar))\bfv=(I+K)\bfv \equiv \bfzero$. Note that $(I+K)\bfv \equiv \bfzero$
actually implies  $(I-\cals)\bfv \equiv \bfzero$ since $(I-\cals) = (I-K)(I+K)$.

We may take $a_4=0$ by Lemma \ref{conj-in-t4}.
The computation
\[
(\beta\alpha)^4=\bft_4^{4b_4+\ell}
\]
shows that
there are $4$ choices for $b_4$, the solutions of
$(\beta\alpha)^4={\bft_4}^{\frac{j}{q}}, (j=0,1,2,3)$. Hence $H^2(D_4; \calz(\tgamsig)) = \bbz_4$.
Both $\varPi$ and $Q$ contain a subgroup of type \gr{3}, and so both always have torsion.
\bigskip

\noindent
\item[\gr{7\ii}] This is a combination of \gr{3\ii}+\gr{1}, which includes \gr{6a\ii}.

By Corollary \ref{a-conj-zero3}, we can take $b_1=b_2=0$. 
The description for $H^1(\Phi; \coker(I-\cals))$ follows as in case \gr{7},
using $\varphi(\alphabar) = \matPpm$ rather than $\varphi(\alphabar)=A$.

Like case \gr{7}, by Lemma \ref{conj-in-t4}, we take $a_4=0$, and there are $4$ choices for $b_4$, the solutions of
$(\beta\alpha)^4={\bft_4}^{\frac{j}{q}}, (j=0,1,2,3)$, so that $H^2(D_4; \calz(\tgamsig)) = \bbz_4$.

For $\varPi$ to be
torsion free, $\angles{\tgamsig, \beta\alpha}$ is necessarily
torsion free. This forces $b_4$ to satisfy $(\beta\alpha)^4=\bft_4^{\frac{j}{q}}
(j =1,3)$, and $q$ even. Note that $\angles{\tgamsig, \beta}$ and $\angles{\tgamsig, \alpha \beta \alpha}$, 
are torsion free by Lemma \ref{EasyTor}.

Thus the only remaining subgroups of $\varPi$ to consider are  $\angles{\tgamsig, \alpha}$ and  
$\angles{\tgamsig, \beta \alpha \beta}$,  where
\[
\beta \alpha \beta =\left(\bft_1^{-k_{11}a_1-k_{12}a_2}\bft_2^{-k_{21}a_1-k_{11}a_2}\bft_4^{2b_4+v} ,
\left[ \begin{matrix} 0 & -1 \\ -1 & 0\end{matrix} \right] \right).
\]
By Lemma \ref{torSolof}, $\angles{\tgamsig, \alpha}$ and  $\angles{\tgamsig, \beta \alpha \beta}$ are 
torsion free precisely when their projections to $\Sol$, $\angles{\gamsig, (\bft_1^{a_1} \bft_2^{a_2},A)}$ 
and $\angles{\gamsig,(\bft_1^{-k_{11}a_1 - k_{12}a_2}
\bft_2^{ -k_{21}a_1 - k_{11}a_2} A,B A B)}$, are torsion free.
	
By Proposition \ref{geometricLemma},
we just need to ensure that the appropriate affine maps are fixed point
free on $T^2$, and this occurs precisely when
\begin{align}
\label{C1}
a_1 = \frac 1 2 ,\ \
a_2&\not\equiv \frac{(\sigma_{11}+1)(2 n+1)}{2\sigma_{12}},\\
\label{C2}
\frac{-k_{21}}{2} - k_{11}a_2 &\equiv \frac 12,\\
\label{C3}
\frac{-k_{11}}{2} - k_{12}a_2 &\not\equiv \frac{\sigma_{12}(2 n+1)}{2(\sigma_{11}-1)}.
\end{align}
	
Now we claim that the second part of condition (\ref{C1}) and the condition
(\ref{C3}) are redundant. That is, they follow from (\ref{C2}).

From (\ref{C2}), we have
\begin{align}
\label{C4}
a_2= -\frac {k_{21}+1}{2k_{11}} + \frac p {k_{11}},\ p\in\bbz.
\end{align}

With $a_1 = \frac 12$ and above $a_2$ with $p=0, \ldots, k_{11}-1$,
using that $\det(K)=-1$ and $K^2=\cals$, one can compute that the remaining
criteria are satisfied. In fact, we compute the term in (\ref{C1})
\begin{align*}
\frac{(\sigma_{11}+1)(2 n+1)}{2\sigma_{12}} &= \frac{(k_{11}^2+k_{12}k_{21}+1)(2n+1)}{4k_{11}k_{12}}\\
&=\frac{2k_{12}k_{21}(2n+1)}{4k_{11}k_{12}}=\frac{k_{21}(2n+1)}{2k_{11}}.
\end{align*}

Now, for some $m \in \bbz$, suppose we had
\[
a_2= \frac{(\sigma_{11}+1)(2 n+1)}{2\sigma_{12}}+m,
\]
as opposed to (\ref{C1}). Then we would have
\[
-\frac {k_{21}+1}{2k_{11}} + \frac p {k_{11}} =
\frac{k_{21}(2n+1)}{2k_{11}} + m.
\]
Clearing up, we get
\[
-1+2p=2k_{21}(n+1)+2m k_{11},
\]
a contradiction for any integers $p,n,m$, as they are of different parity. 
Thus, (\ref{C1}) holds.

For (\ref{C3}), using (\ref{C4}), we get
\begin{align*}
\frac{-k_{11}}{2} - k_{12}a_2 
&= \frac{-k_{11}}{2} - k_{12} \left( -\frac {k_{21}+1}{2k_{11}} + \frac p {k_{11}}   \right) \\
&= \frac{ -k_{11}^2+ k_{12}k_{21} + k_{12} - 2k_{12}p}{2k_{11}} \\
&= \frac{1 + k_{12} - 2k_{12}p} {2k_{11}}.
\end{align*}
Now suppose we had
\[
\frac{-k_{11}}{2} - k_{12}a_2 = \frac{\sigma_{12}(2 n+1)}{2(\sigma_{11}-1)}+m
\]
for some $m \in \bbz$.
Then we would have
\[
\frac{1 + k_{12} - 2k_{12} p} {2k_{11}} 
= \frac{\sigma_{12}(2n+1)}{2(\sigma_{11}-1)}+m = \frac{2k_{11}k_{12}(2 n+1)}{2(k_{11}^2 + k_{12}k_{21}-1)}+m.
\]
Clearing up, we get
\[
{1 - 2k_{12}p} = {2 (n k_{12} +m k_{11}) },
\]
a contradiction for any integers $p,n,m$, as they are of different parity.
Thus, (\ref{C3}) holds automatically.

Consequently, with $a_1 = \frac 12$, 
$a_2= -\frac {k_{21}+1}{2k_{11}} + \frac p {k_{11}}$ for 
$p=0, \ldots, k_{11}-1$, 
and $(\beta\alpha)^4 = \bft_4^{\frac{j}{q}}  (j=1,3)$,
$\varPi$ is torsion free.
\bigskip

\noindent
This completes the proof of Theorem \ref{ClassificationSolof-geometry}.
\end{proof}

\section{Examples}

We can embed $\Sol$ and $\Solof$ into $\aff(3)$ and $\aff(4)$, respectively
so that our $\Sol$ and $\Solof$-orbifolds, $Q \bs \Sol$ and $\varPi \bs \Solof$, have complete affinely flat structures. 
Below we use the embedding $\aff(n) = \bbr^n \rx \gl(n,\bbr) \hra \gl(n+1,\bbr)$. See \cite{miln77-1} for the more general question.

One can check the following correspondence is an injective homomorphism
of Lie groups, $\Solof\lra \aff(4)$,
\begin{equation}
\label{affine-rep}
\left[
\begin{array}{ccc}
1 & e^u x & z \\
0 & e^u & y \\
0 & 0 & 1
\end{array}
\right]
\longmapsto
\left[
\begin{array}{ccccc}
1 & -\frac{1}{2} e^{-u} y & \frac{e^u x}{2} & 0 & z-\frac{x y}{2} \\
0 & e^{-u} & 0 & 0 & x \\
0 & 0 & e^u & 0 & y \\
0 & 0 & 0 & 1 & u \\
0 & 0 & 0 & 0 & 1
\end{array}
\right].
\end{equation}
Moreover, the automorphisms
\[
\left[\begin{matrix}
a&0\\0&d
\end{matrix}\right],\quad
\left[\begin{matrix}
0&b\\ c&0
\end{matrix}\right]\in\aut(\Solof)
\]
can also be embedded as
\[
\left[\begin{matrix}
a d &0 &0 &0 &0\\
0 &a &0 &0 &0\\
0 &0 &d &0 &0\\
0 &0 &0 &1 &0\\
0 &0 &0 &0 &1
\end{matrix}\right],\quad
\left[\begin{matrix}
-b c &0 &0 &0 &0\\
0 &0 &b &0 &0\\
0 &c &0 &0 &0\\
0 &0 &0 &-1 &0\\
0 &0 &0 &0 &1
\end{matrix}\right],
\]
respectively, where $a,b,c,d$ are $\pm 1$.
Note that, if we remove the first row and the first column
from $\aff(4)$, we get a representation of $\Sol$ into $\aff(3)$.

If we write the element $(\bfa,A)\in \Solof\rx D_4$
by the product $\bfa\cdot A$, then the group operation of $\Solof\rx D_4$
is compatible with the matrix product in this affine group.
The action of $A$ on $\bfa$ is by conjugation. That is,
\begin{align*}
(\bfa\cdot A)(\bfb\cdot B)&=\bfa A \bfb B\\
&=\bfa (A \bfb A \inv) \cdot AB\\
&=(\bfa,A) \cdot (\bfb,B)
\end{align*}

We have embedded $\isom(\Solof)$ into $\aff(4)$ in such a way that
any lattice acts on $\bbr^4$ properly discontinuously. Therefore
all of our infra-$\Solof$-orbifolds will have an \emph{affine structure}.
Note that not every nilpotent Lie group admits an affine structure \cite[p. 227]{seif-book}.

With $\cals \in \sltz$, $\tr(\cals)>2$, and appropriate $P$ and $\Delta$, so that $P\cals P \inv = \Delta$, we can lift
$\bbz^2\rx_\cals\bbz\subset\bbr^2\rx_\cals\bbr$ to a lattice of $\Solof$ as in the proof
of Theorem \ref{lattice-part-construction}.
The image of our lattice in $\aff(5)$ under the embedding (\ref{affine-rep})
is complicated. When we conjugate it by
\[
P\inv=
\left[
\begin{array}{ccccc}
1 & 0 & 0 & 0 & 0 \\
0 & p_{11} &p_{12} &0 &0\\
0 & p_{21} &p_{22} &0 &0\\
0 & 0 &0 &1 &0\\
0 & 0 &0 &0 &1
\end{array}
\right]\inv,
\]
we get a much better representation of the group as shown below.
Note that $c_3$ will have no effect on the presentation of our lattice.
Since $\det(P)=1$, $[\bft_1,\bft_2]=\bft_4$.

\begin{align*}
\bfe_1&=\left(\left[\begin{matrix} 1\\ 0 \end{matrix} \right],0\right)
\longmapsto
\bft_1=
\left[\begin{matrix}
1 & p_{11} &c_1\\
0 & 1 &p_{21}\\
0 & 0 &1 \end{matrix}
\right]
\longmapsto
\left[
\begin{array}{ccccc}
1 & 0 & \frac{1}{2} & 0 & {c_1}-\frac{\sigma_{21}}{2\sqrt{T^2-4}}\\
0 & 1 & 0 & 0 & 1 \\
0 & 0 & 1 & 0 & 0 \\
0 & 0 & 0 & 1 & 0 \\
0 & 0 & 0 & 0 & 1
\end{array}
\right]
,\\
\bfe_2&=\left(\left[\begin{matrix} 0\\ 1 \end{matrix} \right],0\right)
\longmapsto
\bft_2=
\left[\begin{matrix}
1 & p_{12} &c_2\\
0 & 1 &p_{22}\\
0 & 0 &1 \end{matrix}
\right]
\longmapsto
\left[
\begin{array}{ccccc}
1 & -\frac{1}{2} & 0 & 0 & {c_2}-\frac{\sigma_{12}}{2\sqrt{T^2-4}}\\
0 & 1 & 0 & 0 & 0 \\
0 & 0 & 1 & 0 & 1 \\
0 & 0 & 0 & 1 & 0 \\
0 & 0 & 0 & 0 & 1
\end{array}
\right],\\
\bfe_3&
=\left(\left[\begin{matrix} 0\\ 0 \end{matrix} \right],1\right)
\longmapsto
\bft_3=
\left[\begin{matrix}
1 & 0 &c_3\\
0 & {\lambda} &0\\
0 & 0 &1 \end{matrix}
\right]
\longmapsto
\left[
\begin{array}{ccccc}
1 & 0 & 0 & 0 & {c_3} \\
0 & \sigma_{11} &\sigma_{12} &0 &0\\
0 & \sigma_{21} &\sigma_{22} &0 &0\\
0 & 0 &0 &1 &\ln(\lambda)\\
0 & 0 &0 &0 &1
\end{array}
\right],\\
&\kern6pt\phantom{=\left(\left[\begin{matrix} 0\\
1 \end{matrix} \right],0\right)
\longmapsto}
\bft_4=
\left[\begin{matrix}
1 & 0&1\\
0 & 1 &0\\
0 & 0 &1 \end{matrix}
\right]
\longmapsto
\left[
\begin{array}{ccccc}
1 & 0 & 0 & 0 & 1 \\
0 & 1 & 0 & 0 & 0 \\
0 & 0 & 1 & 0 & 0 \\
0 & 0 & 0 & 1 & 0 \\
0 & 0 & 0 & 0 & 1
\end{array}
\right],
\end{align*}
where $T=\tr(\cals)$.

\begin{example}[\gr{4} Non-standard lattice]
\label{z4-nonzeroc3-example}
This is an example where $c_3$ can be non-zero
(Theorem \ref{ClassificationSolof-geometry}, case \gr{4}). Here
$A=\matNmp$, so that the holonomy $\Phi= \bbz_4$.

Let
$
\cals=
\left[\begin{matrix}
1&2\\ 2&5
\end{matrix}\right].
$
Then $\lambda=3+2 \sqrt{2}$, and with
\[
P=
\left[
\begin{array}{cc}
-\frac{1}{2} \sqrt{2+\sqrt{2}} & \frac{1}{2} \sqrt{2-\sqrt{2}}\\
-\frac{1}{\sqrt{2 \left(2+\sqrt{2}\right)}} & -\frac{1}{2} \sqrt{2+\sqrt{2}}
\end{array}
\right],
\]
our crystallographic group
$\varPi=\angles{\bft_1,\bft_2,\bft_3, \bft_4^{1/q},\alpha}$, where
$\alpha=(\bft_1^{a_1}\bft_2^{a_2}\bft_4^{a_4},A)\in \Solof\rx\aut(\Solof)$,
has a representation into $\aff(4)$:
\begin{align*}
&\bft_1=\left[
\begin{array}{ccccc}
1 & 0 & \frac{1}{2} & 0 & {\msmall {m_1}-\frac{m_2}{2}-\frac{3}{2}} \\
0 & 1 & 0 & 0 & 1 \\
0 & 0 & 1 & 0 & 0 \\
0 & 0 & 0 & 1 & 0 \\
0 & 0 & 0 & 0 & 1
\end{array}
\right],
\bft_2=\left[
\begin{array}{ccccc}
1 & -\frac{1}{2} & 0 & 0 & {\msmall\frac{1}{2} (-{m_1}-1)} \\
0 & 1 & 0 & 0 & 0 \\
0 & 0 & 1 & 0 & 1 \\
0 & 0 & 0 & 1 & 0 \\
0 & 0 & 0 & 0 & 1
\end{array}
\right],\\
&\bft_3=\left[
\begin{array}{ccccc}
1 & 0 & 0 & 0 & {c_3} \\
0 & 1 & 2 & 0 & 0 \\
0 & 2 & 5 & 0 & 0 \\
0 & 0 & 0 & 1 & {\msmall \ln \left(3+2 \sqrt{2}\right)} \\
0 & 0 & 0 & 0 & 1
\end{array}
\right],
\bft_4=\left[
\begin{array}{ccccc}
1 & 0 & 0 & 0 & 1 \\
0 & 1 & 0 & 0 & 0 \\
0 & 0 & 1 & 0 & 0 \\
0 & 0 & 0 & 1 & 0 \\
0 & 0 & 0 & 0 & 1
\end{array}
\right],\\
&(a,A)=\left[
\begin{array}{ccccc}
1 & -\frac{{a_1}}{2} & -\frac{{a_2}}{2} & 0 &{\msmall \frac{1}{2} (2 {a_4}-{a_2} ({m_1}+1)+{a_1} ({a_2}+2 {m_1}-{m_2}-3))} \\
0 & 0 & 1 & 0 & {a_1} \\
0 & -1 & 0 & 0 & {a_2} \\
0 & 0 & 0 & -1 & 0 \\
0 & 0 & 0 & 0 & 1
\end{array}
\right].
\end{align*}

$\varPi$ has  presentation
\begin{align*}
&[\bft_1,\bft_2]=\bft_4,\ \text{ and $\bft_4$ is central, }
\bft_3\bft_1\bft_3\inv=\bft_1^{} \bft_2^{2} \bft_4^{m_1}, \
\bft_3\bft_2\bft_3\inv=\bft_1^{2} \bft_2^{5} \bft_4^{m_2},\\
&\alpha\bft_1\alpha\inv=\bft_2^{-1} \bft_4^{\frac12(-4-2 a_1+m_1-m_2)},\
\alpha\bft_2\alpha\inv=\bft_1^{}\bft_4^{\frac12(2-2 a_2-3 m_1+m_2)},\\
&\alpha\bft_3\alpha\inv=\bft_1^{-4 a_1+2 a_2}
\bft_2^{2 a_1}
\bft_3^{-1}
\bft_4^{5 a_1^2+2 c_3+a_1 (-5+5 m_1-2 m_2)+a_2 (3-a_2-2 m_1+m_2)},\\
&\alpha\bft_4\alpha\inv=\bft_4^{},\
\alpha^4=\bft_4^{-a_1^2+4 a_4-a_2 (2+a_2+2 m_1)+2 a_1 (-3+a_2+2 m_1-m_2)}.
\end{align*}

Since $
(I-\cals)\inv=
\left[\begin{matrix}
1 &-\tfrac12\\ -\tfrac12& 0
\end{matrix}\right],$ $\coker(I-\cals)=\bbz_2\x\bbz_2 \cong 
\left\langle\xy{\tfrac12}{0},\quad \xy{0}{\tfrac12}\right\rangle$.

Therefore, the equation $(I-\cals)\bfa \equiv \bfzero$
has 4 solutions modulo $\bbz^2$;
\[
\xy{a_1}{a_2}
=
\tfrac{1}{2}\xy{0}{0},\
\tfrac{1}{2}\xy{1}{0},\
\tfrac{1}{2}\xy{0}{1},\
\tfrac{1}{2}\xy{1}{1}.
\]
Recall that we had no other conditions on $\bfa$ in Theorem \ref{ClassificationSolof-geometry} case \gr{4}.
The coboundary is
\[
\im(I-\vara)=
\im
\left[\begin{matrix}
1&-1\\ 1&1
\end{matrix}\right]
=
\left\{\xy{0}{0},\xy{\tfrac12}{\tfrac12}\right\}.
\]
Thus, we have only have to consider two cases
\[
\xy{a_1}{a_2}=
\xy{0}{0},\ \
\xy{\tfrac12}{0}.
\]
For simplicity, we shall assume $m_1=m_2=0$.

With $\xy{a_1}{a_2}=\xy{0}{0}$,
$\varPi=\angles{\bft_1,\bft_2,\bft_3, \bft_4^\frac1q,\alpha}$, where
$\alpha=(\bft_4^{a_4},A)$
has presentation
\begin{align*}
&[\bft_1,\bft_2]=\bft_4,\ \text{ and $\bft_4$ is central,}\
\bft_3\bft_1\bft_3\inv=\bft_1^{} \bft_2^{2}, \
\bft_3\bft_2\bft_3\inv=\bft_1^{2} \bft_2^{5},\\
&\alpha\bft_1\alpha\inv=\bft_2^{-1} \bft_4^{-2},\
\alpha\bft_2\alpha\inv=\bft_1^{}\bft_4^{},\
\alpha\bft_3\alpha\inv=\bft_3\inv\bft_4^{2c_3}, \
\alpha\bft_4\alpha\inv=\bft_4^{},\\
&\alpha^4=\bft_4^{4 a_4}.
\end{align*}
The minimum $q$ for $\tgamsig$ is $q=1$. However, to have a torsion free
crystallographic group we must take $q$ to be even, say $q=2$.  Then we have choices
$a_4=0,\frac18,\frac14,\frac38$ and $c_3=0,\frac14$
(any combination of $a_4$ and $c_3$), with the same center.
So, there are 8 distinct groups.
Half of them (with $c_3=0$) have standard lattices, and the rest
(with $c_3=\tfrac14$) have non-standard lattices. When $a_4=\frac{1}{8}$ or $\frac{3}{8}$ (regardless of $c_3$),
 $\varPi$ is torsion free, and $\varPi \bs \Solof$ is an infra-solvmanifold of $\Solof$ 
with $\bbz_4$ holonomy.

With $\xy{a_1}{a_2}=\xy{\frac12}{0}$,
$\varPi=\angles{\bft_1,\bft_2,\bft_3, \bft_4^{\frac1q},\alpha}$, where
$\alpha=(\bft_1^{\frac12}\bft_4^{a_4},A)$
has presentation
\begin{align*}
&[\bft_1,\bft_2]=\bft_4,\ \text{ and $\bft_4$ is central,}\
\bft_3\bft_1\bft_3\inv=\bft_1^{} \bft_2^{2}, \
\bft_3\bft_2\bft_3\inv=\bft_1^{2} \bft_2^{5},\\
&\alpha\bft_1\alpha\inv=\bft_2^{-1} \bft_4^{-\frac52},\
\alpha\bft_2\alpha\inv=\bft_1^{}\bft_4^{},\
\alpha\bft_3\alpha\inv=\bft_1^{-2}\bft_2^{}\bft_3^{-1}
\bft_4^{-\frac54+2c_3},\
\alpha\bft_4\alpha\inv=\bft_4^{},\\
&\alpha^4=\bft_4^{-\frac{13}{4}+4a_4}.
\end{align*}

The minimum $q$ for $\tgamsig$ is $q=2$ (which comes out of
$\alpha\bft_1\alpha\inv=\bft_2^{-1} \bft_4^{-\frac52}$),
and we have choices
$a_4=\frac{1}{16}+\frac{1}{2}\cdot\{0,\frac14,\frac24,\frac34\}=
\frac{1}{16},\frac{3}{16},\frac{5}{16},\frac{7}{16}$ and
$c_3=\frac{1}{8}+\frac{1}{2}\cdot\{0,\frac12\}=\frac{1}{8},\frac{3}{8}$
(any combination of $a_4$ and $c_3$), with the same center.
So, there are 8 distinct groups.

All these groups have non-standard lattices, because no $c_3$ is an
integer multiple of $\frac1q$, $q=2$. When $a_4=\frac{3}{16}$ or $\frac{7}{16}$ (regardless of $c_3$),
$\varPi$ is torsion free, and $\varPi \bs \Solof$ is an infra-solvmanifold of $\Solof$ 
with $\bbz_4$ holonomy.

\end{example}

\begin{example}
[\gr{7\ii} Maximal holonomy]
\label{maxhol}
Even if this has the maximal holonomy group $D_4$, it does not contain
all the possible holonomy actions. For example,
groups of type \gr{6b} or \gr{6b\ii} are not contained in this group.
Let $\Phi=\bbz_4\rx\bbz_2=\angles{A,B}$, where
\begin{align*}
A=\matNpp,\ B=\matPpm, \mbox{ and }
\alpha&=(\bft_1^{a_1}\bft_2^{a_2},A),
\beta=(\bft_3^{\frac12}\bft_4^{b_4},B).
\end{align*}
Our $\cals$ is of the form
$
\cals=n K+I,
$
where
$
K=\left[\begin{matrix}
k_{11} &k_{12}\\ k_{21} & k_{22}
\end{matrix}\right]
$
with $\det(K)=-1$ and  $\tr(K)=n\not=0$.
Now for $\varphi(\alphabar)=\matPpm$, we take $k_{11}=k_{22}$. For example, we need
$
K=
\left[\begin{matrix}
1&2\\ 1&1
\end{matrix}\right],\quad
n=k_{11}+k_{22}=2,\quad
\cals=n K+I=
\left[\begin{matrix}
3&4\\ 2&3
\end{matrix}\right].
$
Then $\lambda=3+2 \sqrt{2}$, and with
$
P=
\left[
\begin{array}{rr}
-\frac{1}{\sqrt[4]{2^3}} &\frac{1}{\sqrt[4]{2}}\\
-\frac{1}{\sqrt[4]{2^3}} &-\frac{1}{\sqrt[4]{2}}
\end{array}
\right],
$
the equations in Lemma \ref{determine-TT} yield
\begin{align*}
c_1=\frac18 (-12+\sqrt{2}+4 m_1-4 m_2),\
c_2=\frac14 (-\sqrt{2}-4 m_1+2 m_2).
\end{align*}
Recall we  can take $c_3=0$ by Theorem \ref{abstract-kernel-theorem}.
Our crystallographic group
$\varPi=\angles{\bft_1,\bft_2,\bft_3, \bft_4^{\frac1q},\alpha, \beta}$
has a representation into $\aff(4)$:
\begin{align*}
&\bft_1=\left[
\begin{array}{ccccc}
1 & 0 & \frac{1}{2} & 0 & \frac{1}{2} ({m_1}-{m_2}-3) \\
0 & 1 & 0 & 0 & 1 \\
0 & 0 & 1 & 0 & 0 \\
0 & 0 & 0 & 1 & 0 \\
0 & 0 & 0 & 0 & 1
\end{array}
\right],\ \
\bft_2=\left[
\begin{array}{ccccc}
1 & -\frac{1}{2} & 0 & 0 & \frac{1}{2} ({m_2}-2 {m_1}) \\
0 & 1 & 0 & 0 & 0 \\
0 & 0 & 1 & 0 & 1 \\
0 & 0 & 0 & 1 & 0 \\
0 & 0 & 0 & 0 & 1
\end{array}
\right],\\
&\bft_3=\left[
\begin{array}{ccccc}
1 & 0 & 0 & 0 & 0 \\
0 & 3 & 4 & 0 & 0 \\
0 & 2 & 3 & 0 & 0 \\
0 & 0 & 0 & 1 & \ln \left(3+2 \sqrt{2}\right)\\
0 & 0 & 0 & 0 & 1
\end{array}
\right],\  \ %\text{(because $c_3=0$)},
\bft_4=\left[
\begin{array}{ccccc}
1 & 0 & 0 & 0 & 1 \\
0 & 1 & 0 & 0 & 0 \\
0 & 0 & 1 & 0 & 0 \\
0 & 0 & 0 & 1 & 0 \\
0 & 0 & 0 & 0 & 1
\end{array}
\right],\\
&(a,A)=\left[
\begin{array}{ccccc}
-1 & -\frac{a_2}{2} & -\frac{a_1}{2} &
0 & {\msmall \frac{1}{2} (a_1
(a_2+m_1-m_2-3)+a_2
(m_2-2 m_1))} \\
0 & 1 & 0 & 0 & a_1 \\
0 & 0 & -1 & 0 & a_2 \\
0 & 0 & 0 & -1 & 0 \\
0 & 0 & 0 & 0 & 1 \\
\end{array}
\right],\\
&(b,B)=\left[
\begin{array}{ccccc}
-1 & 0&0&0&b_4\\
0 & -1 & -2 & 0 & 0 \\
0 & -1 & -1 & 0 & 0 \\
0 & 0 & 0 & 1 & \frac{1}{2} \ln \left(3+2 \sqrt{2}\right) \\
0 & 0 & 0 & 0 & 1 \\
\end{array}
\right].
\end{align*}

\noindent
We have
\[
\coker(I-\cals)=(\bbz_2)^2
=
\left\{\tfrac12\xy{0}{0},
\tfrac12\xy{0}{1},
\tfrac12\xy{1}{0},
\tfrac12\xy{1}{1}
\right\}.
\]
Now
\[
\varphi(\alphabar)=\matPpm,\quad
\varphi(\betabar)=-K,
\]
yields
\[
I+\varphi(\alphabar)=
\left[\begin{matrix}
2&0\\0&0
\end{matrix}\right],\
I+\varphi(\betabar)=
\left[\begin{matrix}
0&-2\\-1&0
\end{matrix}\right].
\]
Then
$(I+\varphi(\alphabar))\bfa\equiv\bfzero$ yields $2 a_1\equiv 0$, which is not
a new condition.
We therefore have 4 choices for $\bfa$,
\[
\xy{a_1}{a_2}
=\tfrac12\xy{0}{0},\
\tfrac12\xy{0}{1},\
\tfrac12\xy{1}{0},\
\tfrac12\xy{1}{1}.
\]
The coboundary $\im(I-\vara)$ yields the trivial group,
and hence there are 4 distinct choices for $\bfa$.
The group $\varPi$ has a presentation
\begin{align*}
\hskip24pt
&[\bft_1,\bft_2]=\bft_4,\
[\bft_i,\bft_4]=1\ (i=1,2,3),\\
&\bft_3\bft_1\bft_3\inv=\bft_1^{3} \bft_2^{2} \bft_4^{m_1}, \
\bft_3\bft_2\bft_3\inv=\bft_1^{4} \bft_2^{3}\bft_4^{m_2},\\
&\alpha\bft_1\alpha\inv=\bft_1^{}\bft_4^{3-a_2-m_1+m_2},\
\alpha\bft_2\alpha\inv=\bft_2^{-1}\bft_4^{-a_1},\\
&\alpha\bft_3\alpha\inv=\bft_1^{-2 a_1+4 a_2}\bft_2^{2 (a_1-a_2)}\bft_3^{-1}
\bft_4^{3 a_1^2-a_1 (3+6 a_2-3 m_1+2 m_2)+a_2 (6+2 a_2-4 m_1+3 m_2)},\\
&\alpha\bft_4\alpha\inv=\bft_4^{-1},\\
&\beta\bft_1\beta\inv=\bft_1^{-1}\bft_2^{-1}\bft_4^{\frac12(-1-2 m_1+m_2)},\
\beta\bft_2\beta\inv=\bft_1^{-2}\bft_2^{-1}\bft_4^{-4+m_1-m_2},\\
&\beta\bft_3\beta\inv=\bft_3^{},\
\beta\bft_4\beta\inv=\bft_4^{-1},\\
&\alpha^2=\bft_1^{2 a_1}\bft_4^{-a_1 (-3+a_2+m_1-m_2)},\\
&\beta^2=\bft_3^{},\\
&(\alpha \beta)^4=\bft_4^{
-4 b_4+a_1^2+ 4 a_1 a_2 +2 a_2^2 -2 a_1(3-m_1 +m_2)-2 a_2(2m_1-m_2)}.
\end{align*}
Of the four choices for $\bfa$,
only $a_1=\frac12$, $a_2=0$ can yield a torsion free group, and the other three choices always yield a group with torsion:
\begin{alignat*}{2}
\xy{a_1}{a_2}&=\xy{0}{0},\xy{0}{\tfrac12}:\ &&\quad
\alpha^2=\id.\\
\xy{a_1}{a_2}&=\xy{\tfrac12}{\tfrac12}:
&&\quad\Big(\bft_2^{-1}(\alpha\beta)^2\alpha\Big)^2=\id\\
\xy{a_1}{a_2}&=\xy{\frac12}{0}:\  a_2 \equiv 
{\msmall - \frac{k_{21}+1}{2k_{11}} = -1}&&\quad
\end{alignat*}
Let us take $m_1=m_2=0$. When $a_1=\frac 1 2$, $a_2=0$, $q=4$ (minimum), 
$b_4$ takes values $\frac{j}{16}$, $0\leq j\le 3$.
When $b_4=\frac{1}{16}$ or $\frac{3}{16}$, $\varPi$ has torsion.
However, 
when $b_4=0$ or $\frac{2}{16}$, $\varPi$ is torsion free when
$\xy{a_1}{a_2}=\xy{\frac12}{0}$, because all criteria of Theorem \ref{ClassificationSolof-geometry} case \gr{7\ii} are satisfied. In this case, $\varPi \bs \Solof$ is an infra-solvmanifold of $\Solof$ with maximal holonomy $D_4$.
\end{example}

\bibliographystyle{amsplain}

\begin{thebibliography}{20}

\bibitem{4DBieberbachGroup}
H. Brown, R. Bulow, J. Neubuser, H. Wondratschek, and H. Zassenhaus,
{\bf Crystallographic Groups of Four-Dimensional Space}, A.M.S., Wiley Monographs in Crystallography (1978)

\bibitem{Brown}
K. Brown,
{\bf Cohomology of Groups},
GTM 87, Springer-Verlag New York, 1982.

\bibitem{CampTrouy}
 J.T. Campbell and E.C. Trouy,
\emph{When are Two Elements of $\gltz$ Similar?},
{Linear Algebra and its Appl.} {157} (1991), 175-184.


\bibitem{Cobb}
R. Cobb, \emph{Infrasolvmanifolds of Dimension Four},
PhD thesis, The University of Sydney (1999).

\bibitem{Dekimpe}
K. Dekimpe,
{\bf Almost-Bieberbach Groups:  Affine and Polynomial Structures},
Lecture Notes in Mathematics, Springer-Verlag, (1996).

\bibitem{DLR}
K. Dekimpe, K.B. Lee, and F. Raymond,
\emph{Bieberbach theorems for solvable  Lie groups},
Asian J. Math. (2001), no.3, 499--508.

\bibitem{HL}
K. Y. Ha and J. B. Lee, \emph{Crystallographic groups of Sol}, (2013) {Math. Nachr.}, 286: 1614–1667.

\bibitem{hillman1}
J.A. Hillman,
\emph{Geometries and infra-solvmanifolds in dimension 4},
Geom. Dedicata (2007) 129:57--72.

\bibitem{hillman2}
J.A. Hillman,
{\bf Four-Manifolds, Geometries, and Knots},
GT Monographs 5, Geometry and Topology Publications, (2002).

\bibitem{hillman3}
J.A. Hillman,
\emph{$\Sol \x \bbe^1$-Manifolds},
arXiv:1304.2436v2 [math.GT], 18 Apr 2013.

\bibitem{seif-book}
K.B. Lee and F. Raymond,
{\bf Seifert Fiberings}, A.M.S., Mathematical Surveys and Monographs,
vol 166 (2010).

%\bibitem{solof-geo}
%K.B. Lee and S. Thuong,
%\emph{$\Solof$-Geometry}, submitted (May 2013),
%arXiv:1307.3302 [math.GT] 12 Jul 2013.

\bibitem{maclane}
S. Mac~Lane, {\bf Homology}, Die {G}rundlehren der {M}ath.
{W}issenschaften, vol. 114, Springer--Verlag Berlin Heidelberg New York,
1975.

\bibitem{miln77-1}
{J. Milnor},
\emph{On fundamental groups of complete affinely flat manifolds},
{Adv. Math.} 25 (1977), {178-187}.

\bibitem{mostow}
G.D. Mostow,
\emph{Self-adjoint groups},
{The Annals of Mathematics, Second Series}, Vol. 62, No. 1 (Jul., 1955)

\bibitem{rag}
M.~S. Raghunathan,
{\it Discrete {S}ubgroups of {L}ie {G}roups},
{Ergebnisse der {M}athematik und ihrer {G}renzgebiete},
vol.~68, Springer--{V}erlag, 1972.

\bibitem{scott}
P. Scott, \emph{The geometries of $3$-manifolds},
Bull. London Math. Soc.
(1983), 15:401--487.

\bibitem{MA}
Wolfram Research,
\emph{Mathematica}, version 9, 2013.


\end{thebibliography}

\end{document}